\documentclass[11pt,reqno]{amsart}

\usepackage{amsmath,amssymb,amsthm, amsfonts}   
\usepackage{bbm} 
\usepackage{graphicx}   
\usepackage{pstricks}   
\usepackage[colorlinks=true, linkcolor=blue, citecolor=magenta]{hyperref} 
\usepackage{tasks}
\usepackage{enumerate}
\usepackage{caption,subcaption} 

\usepackage{mathtools}      
\mathtoolsset{showonlyrefs} 

\usepackage{mathrsfs}
\usepackage{multicol}   
\usepackage{fullpage}   

\usepackage[foot]{amsaddr}

%
%
%

\setcounter{section}{0}

\newtheorem{thm}{Theorem}[section]
\newtheorem{cor}[thm]{Corollary}
\newtheorem{prop}[thm]{Proposition}
\newtheorem{lem}[thm]{Lemma}

\newtheorem{alg}[thm]{Algorithm}
\newtheorem*{acknowledgements*}{Acknowledgements}
\newtheorem*{thm*}{Theorem}

\theoremstyle{definition}

\theoremstyle{remark}
\newtheorem{rem}[thm]{Remark}

\title{
On the Hausdorff dimension of Riemann's non-differentiable function
}
\author{Daniel Eceizabarrena}
\address{University of Massachusetts Amherst - Department of Mathematics and Statistics. \newline 
Lederle Graduate Research Tower 1623D, 710 N Pleasant St, 
Amherst, MA 01003-9305, USA \newline 
 e-mail: {\tt eceizabarrena@math.umass.edu} \newline
MSC2020: 26A27, 28A78, 28A80, 76B47
 }


\begin{document}

\begin{abstract}
Recent findings show that the classical Riemann's non-differentiable function has a physical and geometric nature as the irregular trajectory of a polygonal vortex filament driven by the binormal flow. In this article, we give an upper estimate of its Hausdorff dimension. We also adapt this result to the multifractal setting. 
To prove these results, we recalculate the asymptotic behavior of Riemann's function around rationals from a novel perspective, underlining its connections with the Talbot effect and Gauss sums, with the hope that it is useful to give a lower bound of its dimension and to answer further geometric questions.
\end{abstract}

\maketitle



\section{Introduction}

\subsection{Riemann's non-differentiable function}


In a lecture in the Royal Prussian Academy of Sciences in 1872, in Berlin, Weierstrass \cite{Weierstrass1872} explained against the belief of the time that a continuous function need not have a well-defined derivative, proposing the famous Weierstrass functions,
\begin{equation}\label{eq:Weierstrass_Function}
W(x) = \sum_{n=1}^\infty a^n\,\cos (2\pi b^k x), \qquad 0 < a < 1, \quad b>1, \quad ab \geq 1,
\end{equation}
as counterexamples. However, his main motivation to tackle this problem was the function
\begin{equation}\label{RiemannFunctionOriginal}
R(x) = \sum_{n=1}^{\infty}{\frac{ \sin{ (n^2x ) } }{ n^2 }}
\end{equation}
proposed by Riemann some years earlier. Riemann is believed to have claimed that $R$ was continuous but nowhere differentiable. Even if no written nor oral proof survived, \eqref{RiemannFunctionOriginal} became widely known as Riemann's non-differentiable function. Weierstrass claimed that this conjecture was a \textit{somewhat difficult} problem, and he was correct indeed, since one century had to pass until Gerver \cite{Gerver1970} disproved the conjecture in 1970. He showed that $R$ is differentiable at points $\pi x$ where $x\in\mathbb{Q}$ is a quotient of two odd numbers, with derivative equal to $-1/2$. Previously, in 1916, Hardy \cite{Hardy1916} had shown that $R$ is not differentiable in $\pi x$ if $x$ is irrational. The problem was completely solved in 1971 by Gerver himself \cite{Gerver1971}, showing that it was also the case of the remaining rationals. Later, Duistermaat \cite{Duistermaat1991}, Jaffard \cite{Jaffard1996} and Jaffard and Meyer \cite{JaffardMeyer1996} studied the regularity of $R$ deeper. 
In all these works, a common technique is to study a generalization of $R$ to the complex plane,
\begin{equation}\label{PhiDuistermaat}
\phi_D(t) = \sum_{n=1}^{\infty}{ \frac{e^{i \pi n^2 t}}{i \pi n^2} }, 
\end{equation}
for which $\operatorname{Re}\phi_D(t) = R(\pi t)/\pi$. 

\subsection{A physical and geometric version of Riemann's function}

Recently, De la Hoz and Vega \cite{delaHozVega2014} found a version of Riemann's non-differentiable function, 
\begin{equation}\label{Phi}
\phi(t) = \sum_{k\in\mathbb{Z}}{\frac{ e^{ - 4\pi^2 i k^2 t } - 1 }{ -4\pi^2k^2 } },
\end{equation}
 in a novel context concerning the evolution of vortex filaments, thus giving it a fantastic geometric and physical interpretation. They showed that \eqref{Phi}, which is related to the previous $\phi_D$ by 
\begin{equation}\label{FromPhiDuistermaatToPhi}
\phi(t) = -\frac{i}{2\pi}\phi_D(-4\pi t) + i t + \frac{1}{12}, \qquad \qquad \forall t \in \mathbb{R},
\end{equation}
approximates accurately the trajectories of the corners of polygonal vortex filaments that follow the binormal flow, a model for the evolution of a single vortex filament that is represented by the vortex filament equation (VFE) or localized induction approximation (LIA), 
\begin{equation}\label{VFE}
\boldsymbol{X}_t = \boldsymbol{X}_s \times \boldsymbol{X}_{ss}, \qquad \text{ or equivalently } \qquad \boldsymbol{X}_t =  \kappa \, \boldsymbol{B}.
\end{equation}
Here, the vortex is represented by the curve $\boldsymbol{X} : \mathbb{R}^2 \to \mathbb{R}^3$ with variables $s$ and $t$, the arclength and the time respectively, and is given an initial condition $\boldsymbol{X}(s,0)$. Also, $\kappa = \kappa(s,t)$ represents the curvature and $\boldsymbol{B} = \boldsymbol{B}(s,t)$ is the binormal vector.

The VFE was originally proposed by Da Rios \cite{DaRios1906}, though forgotten and rediscovered many times by different authors, as discussed in \cite{Ricca1991}. A landmark result in the study of this equation is due to Hasimoto \cite{Hasimoto1972}, who established a direct connection between the VFE and the cubic nonlinear Schr\"odinger equation (NLS). The relationship works as follows: let $\kappa$ and $\tau$ be the curvature and torsion of the filament $\boldsymbol{X}$ that evolves according to the VFE, and define the complex-valued function
\begin{equation}\label{Hasimoto}
\psi(s,t) = \kappa(s,t) \, e^{i\int_0^s{ \tau(\sigma,t)\,d\sigma } }.
\end{equation} 
This is often called the filament function. Hasimoto showed that $\psi$ satisfies 
\begin{equation}\label{NLS}
\psi_t = i \psi_{ss} + \frac{i}{2}\, \left( \left| \psi \right|^2 + A(t) \right)\psi,
\end{equation}
where $A(t)$ is a real function of time. This function $A(t)$ supposes no extra inconvenient in practice because the function $\Psi(s,t) = \psi(s,t)\, e^{-i/2 \int_0^t A(\tau)d\tau}$ solves the standard cubic NLS 
\begin{equation}
\Psi_t = i \Psi_{ss} + \frac{i}{2}\left| \Psi \right|^2\Psi.
\end{equation}
The usefulness of this transformation is evident because, under the condition that it can be unmade, it allows to work directly with the cubic NLS. In principle, if $\psi$ is found, its definition yields $\kappa$ and $\tau$ directly and the tangent vector is obtained integrating the Frenet-Serret system. The curve is then recovered integrating the tangent. Unfortunately, it is not always trivial to materialize these ideas. Even in the simple case of a partially straight filament with $\kappa = 0$, the Frenet-Serret frame is not well-defined! In fact, Hasimoto needed to assume this non-vanishing restriction for the curvature. However, Koiso \cite{Koiso1997} showed that a parallel frame can used instead of the classic Frenet-Serret frame to remove this restriction, unmake the transformation and recover $\boldsymbol{X}$.

We are particularly interested in the evolution of closed vortex filaments. Think of smoke rings of cigarettes which, as we know, essentially maintain their shape while they travel. But what happens if the ring has the shape of a triangle? 
 In  \cite{KlecknerScheelerIrvine2014} they did this experiment with a clover-shaped filament, and its evolution is nothing close to that of the circular ring. De la Hoz and Vega \cite{delaHozVega2014} then showed that the triangle behaves in a similar way. More generally, they studied general regular polygonal vortices, and they showed that surprisingly their evolution is ruled by the Talbot effect, an originally optical phenomenon. A numeric simulation of the evolution of the triangular vortex is available in \cite{KumarVideos} or in the video \url{https://youtu.be/f3HQFfTtFtU} by Sandeep Kumar. 
 
 The video above also shows the trajectory of one of the corners of the triangle. These trajectories were also numerically simulated in \cite[Figure 2]{delaHozVega2014}, which turn out to be plane and some of which are shown in Figure~\ref{fig:Numeric_Trajectories}. Comparing them to the image of $\phi$  \eqref{Phi} shown in Figure~\ref{FIG_Curva}, there is little doubt that this version of Riemann's non-differentiable is a very good approximation of these trajectories.
 \begin{figure}[h]
 \includegraphics[width=\textwidth]{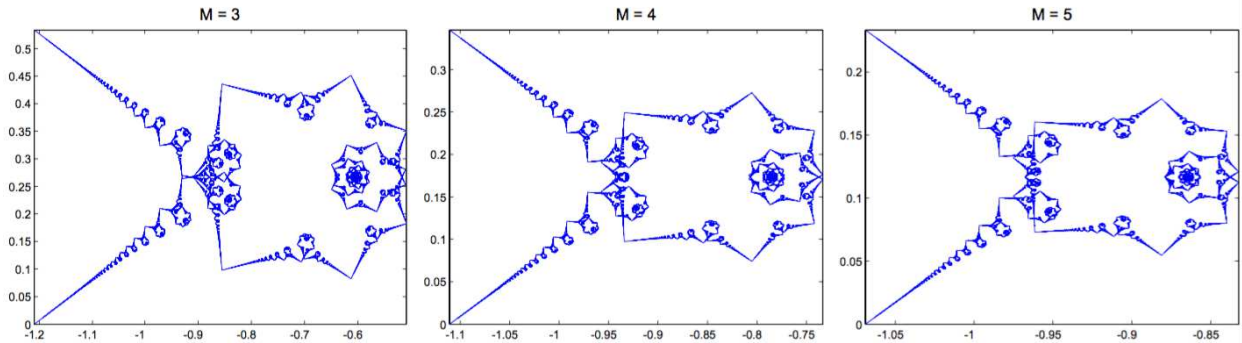}
 \caption{Numeric simulations of the trajectory of a corner of the $M$-sided regular polygon, for $M=3,4,5$. Image by F. De la Hoz and L. Vega.}
 \label{fig:Numeric_Trajectories}
 \end{figure}
 \begin{figure}[h]
\centering
\includegraphics[width=0.5\linewidth]{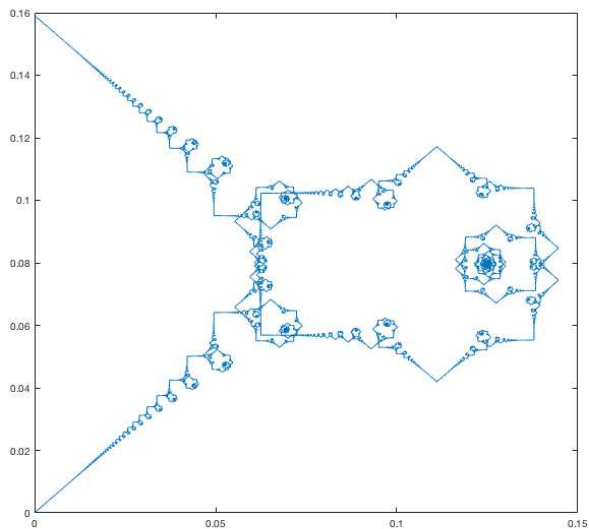}
\caption{\small The set $\phi([0,1/(2\pi)]) \subset \mathbb C$. The resemblance to the numeric trajectories in Figure~\ref{fig:Numeric_Trajectories} is astonishing.}
\label{FIG_Curva}
\end{figure}
 
 Let us briefly explain why Riemann's function appears in this context. For that, we need to describe the evolution of polygonal vortices with the VFE. Let $M \in \mathbb N$ and $\boldsymbol{X}_M$ be the solution to the VFE when the initial datum $\boldsymbol{X}_M(s,0)$ is a planar regular polygon of $M$ sides. An option to parametrize it is to do it first in the interval $[0,2\pi)$ and then to extend it periodically to $\mathbb R$, so that the problem becomes periodic in space. Thanks to Hasimoto's transformation, we can work with the filament function $\psi_M$ \eqref{Hasimoto} instead, so we need to parametrize the curvature and the torsion of the polygon. The torsion is zero because the polygon is planar. Regarding the curvature, we may think of each corner as a Dirac delta, so placing $M$ of them uniformly in $[0,2\pi)$ and extending periodically, it is reasonable to set
\begin{equation}\label{RegularPolygonM}
\psi_M(s,0) = \kappa_M(s,0) = \frac{2\pi}{M}\sum_{k \in \mathbb{Z}}{ \delta\left( s - \frac{2\pi}{M}k \right) }. 
\end{equation} 
We now do heuristic but clarifying computations. Instead of solving NLS for $\psi_M$, forget about the nonlinearity and assume  $\psi_M$ solves the free Schr\"odinger equation 
\begin{equation}\label{FreeSchrodingerEquation}
\psi_t = i\,\psi_{ss}
\end{equation} 
With the help of the Poisson summation formula, the well-known solution is
\begin{equation}\label{FreeSchrodingerSolution}
\psi_M(s,t) = e^{it\partial_s^2}\left(\frac{2\pi}{M}\,\sum_{k \in \mathbb{Z}}\delta(\cdot - \textstyle{\frac{2\pi}{M}}\, k)  \right)(s) =  \sum_{k\in\mathbb{Z}}{ e^{ i M k s -  i M^2 k^2 t} }.
\end{equation}
To recover $\boldsymbol{X}_M$, we should integrate the Frenet-Serret system in $s$ to get the tangent, and integrate the latter also in $s$. Again, a very heuristic shortcut is to integrate $\psi_M$ twice in $s$, and since $\psi_M$ solves the free Schr\"odinger equation, that amounts to integrate it once in $t$. Thus, we would get
\begin{equation}\label{PhiIntegral}
\boldsymbol{X}_M(s,t) \approx i \, \int_0^t{ \sum_{k\in\mathbb{Z}}{ e^{ i M k s -  i M^2 k^2 \tau} } \, d\tau }.
\end{equation}  
The point $\boldsymbol{X}_M(0,0)$ represents a corner, whose trajectory is $\boldsymbol{X}_M(0,t)$. According to the definition of $\phi$ in \eqref{Phi}, we get 
\begin{equation}
\boldsymbol{X}_M(0,t) \approx  \sum_{k\in\mathbb{Z}}{\frac{ e^{ - i M^2 k^2 t } - 1 }{ -M^2 \, k^2 } } = \frac{4\pi^2}{M^2}\,  \phi\left( \frac{M^2}{4\pi^2}\, t \right).
\end{equation}

In view of the resemblance of the numeric trajectories of Figure~\ref{fig:Numeric_Trajectories} and the image of $\phi$ in Figure~\ref{FIG_Curva}, this crude approximation is surprisingly precise. Moreover, the larger $M$, the better the matching, which suggests some kind of convergence of the trajectories $\boldsymbol{X}_M$ to $\phi$ when $M \to \infty$. The first result in this direction has been given recently by Banica and Vega \cite{BanicaVega2020-R} for initial polygons of $M = 2n + 1$ sides with a particular parametrization. For completeness, we reproduce their result here in a simplified way. To put ourselves in context, observe that the parametrization of the periodic data we considered in \eqref{RegularPolygonM} gives infinitely many loops around the polygon.
\begin{thm*}{(\cite[Theorem 1.1]{BanicaVega2020-R})}
Let $n \in \mathbb N$ and the planar regular polygon of $2n+1$ sides be parametrized by $\boldsymbol{X}_n(s,0)$, which gives a single loop to the polygon when $|s| \leq n$ with its corners located at the integers, and which escapes to infinity by two straight lines when $|s|>n$. Then, $\lim_{n \to \infty}n\boldsymbol{X}_n(0,t) = \phi(t)$
\end{thm*}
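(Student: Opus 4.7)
The plan is to promote the heuristic derivation in the preceding subsection to a rigorous argument for this particular single-loop parametrization. First, I would compute the initial filament function $\psi_n(\cdot,0)$ through Hasimoto's transformation \eqref{Hasimoto}. Because the polygon is planar with its $2n+1$ corners located at the integers in $[-n,n]$, the curvature concentrates as a sum of Dirac deltas each weighted by the exterior angle $2\pi/(2n+1)$, and the Hasimoto exponential reduces to a piecewise-constant phase $e^{i\alpha_k}$ attached to each delta, with $\alpha_k$ accumulating the frame rotation at the preceding corners. The straight tails at $|s|>n$ carry no curvature and contribute nothing to $\psi_n(\cdot,0)$, but they must be kept in the picture during the reconstruction of $\boldsymbol{X}_n$, which forces the use of a parallel frame in the sense of Koiso \cite{Koiso1997}.

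Next, I would analyze the NLS dynamics \eqref{NLS} by splitting $\psi_n(s,t) = e^{it\partial_s^2}\psi_n(\cdot,0)(s) + N_n(s,t)$. The linear part is a finite sum of Gaussians with quadratic phases, whose structure at rational times is governed by Gauss sums and by the Talbot effect mentioned in the abstract. The nonlinear remainder $N_n$ should be controllable uniformly in $n$ thanks to the $O(1/n)$ smallness of each Dirac weight, which transfers an extra power of $1/n$ to the cubic nonlinearity.

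To read off the trajectory of the corner at $s=0$, I would recover $\boldsymbol{X}_n$ from $\psi_n$ via the parallel frame and use the shortcut of \eqref{PhiIntegral}, by which the double integration in $s$ producing the position reduces to a single integration in $t$, up to linear corrections. A direct computation of the leading contribution should yield an expression of the form
\begin{equation}
n\,\boldsymbol{X}_n(0,t) = C\sum_{k=-n}^{n} e^{i\alpha_k}\,\frac{e^{-4\pi^2 ik^2 t}-1}{-4\pi^2 k^2} + R_n(t),
\end{equation}
and it remains to identify the constant $C$ and the limiting structure of the phases $\alpha_k$ so that the sum matches the series \eqref{Phi} defining $\phi$, while showing $R_n(t)\to 0$ as $n\to\infty$.

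The hardest step is almost certainly the simultaneous control of the NLS remainder and of the parallel-frame reconstruction: the initial filament function is only a finite sum of Dirac masses, so NLS is not well-posed in standard Sobolev spaces and one must work in function spaces adapted to Dirac-type data, as developed in earlier work of Banica and Vega. Keeping the error uniform in $n$ additionally requires quantitative estimates on oscillatory sums with quadratic phase, close in spirit to the Weyl and van der Corput techniques that appear later in this paper.
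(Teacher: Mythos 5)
This statement is not proved in the paper at all: it is quoted verbatim from Banica and Vega \cite{BanicaVega2020-R} as background, and the paper's surrounding computation (\eqref{RegularPolygonM}--\eqref{PhiIntegral}) is explicitly labelled ``heuristic but clarifying''. Your proposal is, in essence, that same heuristic restated as a to-do list, and the items left on the list are precisely the hard parts. Concretely: (i) the initial filament function is a finite sum of Dirac masses, for which the cubic term $|\psi|^2\psi$ in \eqref{NLS} is not even defined distributionally, so the decomposition $\psi_n = e^{it\partial_s^2}\psi_n(\cdot,0) + N_n$ with ``$N_n$ controllable'' cannot be set up without first constructing the solution in spaces adapted to Dirac data and renormalizing the resonant part of the nonlinearity (this is where the function $A(t)$ and a divergent phase must be removed); the claim that the $O(1/n)$ weight of each delta makes the cubic term perturbative uniformly in $n$ is an assertion, not an estimate, and it is exactly the content of Banica--Vega's construction. (ii) The step ``double integration in $s$ reduces to single integration in $t$'' is the paper's own admittedly non-rigorous shortcut; rigorously, $\boldsymbol{X}_n(0,t)$ is obtained by integrating $\boldsymbol{X}_t = \kappa\boldsymbol{B}$, i.e.\ by controlling the parallel frame ODEs, and you would have to prove that the frame at $s=0$ converges (after the $n$-rescaling) to the correct limit --- you acknowledge this but do not indicate how. (iii) The identification of the constant $C$, of the phases $\alpha_k$, and the proof that $R_n(t)\to 0$ are deferred entirely. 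Finally, a small factual point: Weyl and van der Corput estimates do not ``appear later in this paper''; the paper's machinery is theta-modular transformations and Gauss sums, and in any case none of it is used to prove this external theorem.

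In short, what you have written is a plausible roadmap that matches the motivation section of the paper, but it contains no proof of any of the steps that make the theorem nontrivial, so it cannot be accepted as a proof of the statement.
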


Hence, this theorem and the novel point of view gives Riemann's non-differentiable function an intrinsic geometric and physical nature that makes its study from these perspectives an interesting topic. For instance, related to physics and the theory of turbulence, it was shown in \cite{BoritchevEceizabarrenaVilaca2019} that it is intermittent. However, in this article we focus in geometric aspects.

\subsection{Geometric study of Riemann's function}

A quick look at Figure~\ref{FIG_Curva} is enough to be convinced of the geometric complexity of Riemann's non-differentiable function. Related to this, for instance, in \cite{Eceizabarrena2020} its geometric differentiability was analyzed. It is also quite natural to wonder whether this is a fractal or not; this is precisely the focus of this paper.

Questions about the dimension (either Hausdorff, Minkowski or others) of non-differentiable functions are popular. A famous, long-lived problem is to prove that the dimension of the graph of the Weierstrass function \eqref{eq:Weierstrass_Function} is $ 2 + \log(a)/\log(b)$, 
as was conjectured by Mandelbrot \cite{Mandelbrot1977} in 1977. While the result for the Minkowski dimension was proved in 1984 \cite{KaplanMalletParetYorke1984}, the conjecture for the Hausdorff dimension resists, at least partially. Aside from a randomized version by Hunt \cite{Hunt1998}, the best known result known is by Shen in 2018 \cite{Shen2018}, who proved the conjecture for any $0<a<1$ and $b\in \mathbb{N}, b \geq 2$ using dynamical systems.  

Riemann's non-differentiable function is also an interesting case of study, and in the spirit of Weierstrass' words, even a more difficult one due to the slower convergence of the series. The main result in the literature is by Chamizo and Córdoba \cite{ChamizoCordoba1999,Cordoba2008}, who proved that the Minkowski dimension of the graph of the original function \eqref{RiemannFunctionOriginal} is 5/4. Concerning the Hausdorff dimension, to my knowledge, no result is known yet. 

The discoveries in the context of vortex filaments, though, make us focus on the image of the complex valued function \eqref{Phi} shown in Figure~\ref{FIG_Curva} rather than in the graph of the original function. The question about the dimension of $\phi$ is in principle more challenging than studying the dimension of the graph of the original Riemann's function. Indeed, in the case of a graph, we have a complete control of the speed of the curve in the direction of the abscissa, while the image of a parametric curve can move in the plane arbitrarily. Also, the fact that Figure~\ref{FIG_Curva} is not a graph makes it have plenty of self-intersections that make its study harder.

\subsection{Results}

In this paper, we give a first approach to computing the Hausdorff dimension of the image of $\phi$.
\begin{thm}\label{TheoremHausdorffDimension}
The Hausdorff dimension of the image of Riemann's non-differentiable function $\phi$ defined in \eqref{Phi} satisfies
\[  1 \leq \operatorname{dim}_{\mathcal{H}}{ \phi(\mathbb{R}) } \leq \frac43.   \]
\end{thm}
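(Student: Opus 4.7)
The function $\phi$ is continuous and clearly non-constant (e.g., $\phi(0)=0$ while $\phi$ is not identically zero on a neighbourhood of $0$), so $\phi(\mathbb{R})$ is a connected subset of $\mathbb{C}$ containing more than one point. Any such set has Hausdorff dimension at least $1$: its orthogonal projection to a direction in which it has positive diameter is a non-degenerate interval of $\mathbb{R}$, and Hausdorff dimension does not increase under the Lipschitz projection. Hence $1\leq\operatorname{dim}_{\mathcal{H}}\phi(\mathbb{R})$ without additional input.

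\textbf{Upper bound: strategy.} By periodicity of $\phi$, it suffices to bound the dimension of $\phi(I)$ on a single period $I$. The plan is to cover $\phi(I)$ using the Farey structure of $\mathbb{R}$ and the asymptotic behaviour of $\phi$ around rationals that the abstract announces: for every reduced fraction $p/q$ and every small $h$,
\[
\phi\!\left(\tfrac{p}{q}+h\right)-\phi\!\left(\tfrac{p}{q}\right) = c(p,q)\,\sqrt{|h|}\,\omega(h) + (\text{lower-order remainder}),
\]
where $c(p,q)$ is essentially a normalised quadratic Gauss sum with $|c(p,q)|\lesssim q^{-1/2}$ (the typical Gauss sum size) and $\omega(h)$ is a unit-modulus phase. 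At rationals with both $p$ and $q$ odd, the leading coefficient degenerates to Gerver's tangent direction and the genuine oscillatory size is $h^{3/4}$; these "good" rationals will only improve the count.

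\textbf{Upper bound: covering and computation.} For each large integer $N$, partition $I$ by the Farey intervals of order $N$: each interval is centered at some reduced fraction $p/q$ with $q\leq N$ and has length $\asymp 1/(qN)$. Applied uniformly over this interval, the asymptotic gives
\[
\operatorname{diam}\left(\phi\!\left(\left[\tfrac{p}{q}-\tfrac{C}{qN},\,\tfrac{p}{q}+\tfrac{C}{qN}\right]\right)\right) \;\lesssim\; \frac{1}{\sqrt{q}}\sqrt{\frac{1}{qN}} \;=\; \frac{1}{q\sqrt{N}}.
\]
Raising to the power $4/3$ and summing over all Farey fractions of order $N$, using $\#\{p : \gcd(p,q)=1,\,1\leq p\leq q\}=\varphi(q)\leq q$, yields
\[
\sum_{q=1}^{N}\varphi(q)\left(\frac{1}{q\sqrt{N}}\right)^{4/3} \;\lesssim\; N^{-2/3}\sum_{q=1}^{N}q^{-1/3} \;\lesssim\; N^{-2/3}\cdot N^{2/3} \;\lesssim\; 1,
\]
uniformly in $N$. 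Since every diameter tends to $0$ as $N\to\infty$, this proves $\mathcal{H}^{4/3}(\phi(I))<\infty$ and hence $\operatorname{dim}_{\mathcal{H}}\phi(\mathbb{R})\leq 4/3$.

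\textbf{Main obstacle.} The delicate ingredient is the asymptotic expansion itself, which must hold with an error term that is uniform in $h$ and controlled polynomially in $q$, so that the diameter estimate above is valid throughout the full Farey interval of length $\asymp 1/(qN)$ rather than only in the limit $h\to 0$ at fixed $q$. Producing this uniform remainder via Poisson summation and a careful Gauss-sum computation—exactly the recalculation promised in the abstract—is the technical heart of the argument. Subtler points include checking that no Farey fraction is under-covered (standard Dirichlet approximation), and verifying that the odd/odd rationals, where the expansion changes shape, are indeed absorbed into the same sum without spoiling the $N^{2/3}$ balance.
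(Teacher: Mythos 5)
Your argument is correct and takes essentially the same route as the paper: the lower bound by projecting the continuous, non-constant curve onto a line, and the upper bound by covering one period with rational-approximation intervals and invoking the uniform local estimate $|\phi(t_{p,q}+h)-\phi(t_{p,q})|\lesssim |h|^{1/2}q^{-1/2}$ for $|h|\lesssim q^{-2}$ (Corollary~\ref{thm:Global_Bound}), which is indeed the technical heart and is established separately from the asymptotics around rationals. Your Dirichlet/Farey covering at scale $N$ is precisely the variant the paper records in a remark after its main proof (the main proof instead uses continued-fraction convergents and balls $B(\phi(t_{p,q}),Cq^{-3/2})$ over all $q\ge Q_0$), and it even gives the slightly stronger conclusion $\mathcal H^{4/3}(\phi(I))<\infty$. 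One small slip: at the exceptional rationals ($q\equiv 2\pmod 4$ in the paper's normalization, your odd/odd case) the local size is $C q^{3/2}|h|^{3/2}$, not $|h|^{3/4}$; this does not harm your count, since $q^{3/2}|h|^{3/2}\le q^{-1/2}|h|^{1/2}$ on the relevant range $|h|\le q^{-2}$.
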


This theorem
can be generalized to the context of multifractality, a very popular topic in the mathematical study of turbulence which deals with the local H\"older regularity of functions. Let us briefly introduce it. 
For $\alpha \geq 0$, a function $f$ is said to be $\alpha$-H\"older in $x_0 \in \mathbb{R}$, and denoted $f\in C^{\alpha}(x_0)$, if there exists a polynomial $P$ with $\operatorname{deg}P \leq \alpha$ such that 
\[ \left| f(x_0+h) - P(h)  \right| \leq C |h|^{\alpha}, \qquad \text{ when } h \text{ is small}.  \]
The H\"older exponent of $f$ at a given point $x_0$ is the maximal H\"older regularity of $f$ at $x_0$,
\begin{equation}\label{MaximalHolderRegularity}
\alpha_f(x_0) = \sup\left\{ \beta \geq 0 \mid f \in C^{\beta}(x_0)  \right\}. 
\end{equation}  
Then, the Hausdorff dimension of the set of points with exponent $\alpha$, that is,  
\begin{equation}\label{SpectrumOfSingularitiesDef}
d(\alpha) = \operatorname{dim}_{\mathcal{H}}\{  x \in \mathbb{R} \mid \alpha_f(x) = \alpha \}, \qquad \forall \alpha \geq 0, 
\end{equation} 
 when regarded as a function of $\alpha$, is called the spectrum of singularities.
This definition is usually extended to values of $\alpha$ yielding an empty set by setting their image to be $-\infty$. The spectrum of singularities is the principal object of study in multifractal analysis, and in fact a function is said to be multifractal if its spectrum of singularities is defined by \eqref{SpectrumOfSingularitiesDef} at least on an open interval of H\"older exponents $\alpha$.

Riemann's non-differentiable function was shown to be a multifractal by Jaffard \cite{Jaffard1996}, who proved
\begin{equation}\label{SpectrumOfSingularitiesJaffard}
d_R(\alpha) = \left\{
\begin{array}{cl}
4\alpha - 2, & \quad \text{if } \alpha \in [1/2,3/4], \\
0, & \quad \text{if } \alpha = 3/2, \\
-\infty, & \quad \text{otherwise}.
\end{array}
\right.
\end{equation}
The three functions $R$, $\phi_D$ and $\phi$ have the same regularity, so in fact \eqref{SpectrumOfSingularitiesJaffard} holds for all of them. 
With this result in hand, he also proved the validity of the Frisch-Parisi multifractal formalism  \cite{FrischParisi1985} for Riemann's function. To prove \eqref{SpectrumOfSingularitiesJaffard}, Jaffard established a relationship between the H\"older exponent of $R$ at an irrational point and a particular irrationality exponent of that irrational point that is related to the rate of convergence of its sequence of approximations by continued fractions. 

Multifractality is, thus, a concept measured in the domain of a function. With the geometric interpretation of Riemann's function in mind, a natural question is whether the multifractality of $\phi$ is translated from its domain to its image $\phi(\mathbb{R})$. We prove a partial result in this direction. 
\begin{thm}\label{TheoremHausdorffDimensionGeneralised}
Let $\phi$ be Riemann's non-differentiable function \eqref{Phi} and $D_{\sigma} = \{ x \in \mathbb{R} \mid \alpha_\phi(x) = \sigma \}$. Then,
\[  \operatorname{dim}_{\mathcal{H}} \phi(D_{\alpha}) \leq  \operatorname{dim}_{\mathcal{H}} \phi\Big(\bigcup_{\sigma \leq \alpha}D_{\sigma}\Big) \leq \frac{4\alpha - 2}{\alpha}, \qquad \forall \alpha \in [1/2,3/4]. \]
\end{thm}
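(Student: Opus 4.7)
The strategy is to adapt the covering argument behind Theorem~\ref{TheoremHausdorffDimension} to the restricted set $E_\alpha := \bigcup_{\sigma \leq \alpha} D_\sigma$. As a first step, I would invoke Jaffard's identification \cite{Jaffard1996} of H\"older exponents with irrationality exponents, namely $\alpha_\phi(x) = 1/2 + 1/(2\tau(x))$ for irrational $x$ (with $\tau(x) \geq 2$), so that, modulo a countable set of rationals that is negligible for Hausdorff dimension,
\[
E_\alpha \subseteq \{\, x \in \mathbb{R}\setminus\mathbb{Q} : \tau(x) \geq \tau(\alpha) \,\}, \qquad \tau(\alpha) := \frac{1}{2\alpha - 1} \in [2, \infty].
\]
Consequently, for every $\tau' < \tau(\alpha)$ and every $Q \in \mathbb{N}$, the definition of the irrationality exponent produces the cover (up to countably many points)
\[
E_\alpha \subseteq \bigcup_{q \geq Q} \bigcup_{\substack{0 \leq p \leq q \\ \gcd(p, q) = 1}} B\!\left( \frac{p}{q},\, q^{-\tau'} \right).
\]

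I would then push this cover forward under $\phi$ and control the image diameters using the asymptotic expansion of $\phi$ near rationals, which is the principal technical contribution of this paper. The expected form of this expansion, consistent with the Gauss-sum analysis advertised in the introduction, is
\[
\left| \phi\!\left(\frac{p}{q} + h\right) - \phi\!\left(\frac{p}{q}\right) - L_{p,q}\, h \right| \lesssim q^{-1/2}\, |h|^{1/2}
\]
for $|h|$ small and $L_{p,q}$ bounded uniformly, where the $q^{-1/2}$ factor encodes Gauss-sum cancellation and $|h|^{1/2}$ the generic $1/2$-H\"older behavior of $\phi$ at rationals. At scale $|h| = q^{-\tau'}$, since $\tau' > 1$ the nonlinear term dominates the linear one, and hence
\[
\operatorname{diam} \phi\!\left( B\!\left(\tfrac{p}{q},\, q^{-\tau'}\right) \right) \lesssim q^{-(\tau' + 1)/2}.
\]

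To finish, summing the $s$-th powers of these image diameters and using $\varphi(q) \lesssim q$, the resulting $\delta$-cover of $\phi(E_\alpha)$ with $\delta = 2 Q^{-(\tau'+1)/2}$ satisfies
\[
\mathcal{H}^s_{\delta}\!\left( \phi(E_\alpha) \right) \lesssim \sum_{q \geq Q} q^{\,1 - (\tau'+1)s/2},
\]
which vanishes as $Q \to \infty$ whenever $s > 4/(\tau' + 1)$. Letting $\tau' \nearrow \tau(\alpha)$ yields $s > 4/(\tau(\alpha) + 1) = (4\alpha - 2)/\alpha$, hence $\operatorname{dim}_{\mathcal{H}} \phi(E_\alpha) \leq (4\alpha - 2)/\alpha$; the monotonicity $\phi(D_\alpha) \subseteq \phi(E_\alpha)$ immediately delivers the first inequality in the theorem. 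I expect the main obstacle to be not the covering argument itself, which is essentially bookkeeping once the ingredients are in place, but rather the rigorous derivation of the sharp $q^{-1/2}$ dependence in the local expansion of $\phi$ at rationals; this is precisely what the Gauss-sum and Talbot-effect machinery developed later in the paper is designed to produce.
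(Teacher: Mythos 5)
Your proposal follows essentially the same route as the paper: use the Jaffard-type correspondence between the H\"older exponent and an irrationality exponent to cover $E_\alpha$ by balls $B(p/q, q^{-\tau'})$ with $\tau'$ just below $\tau(\alpha)=1/(2\alpha-1)$, push the cover forward with the local bound $|\phi(t_{p,q}+h)-\phi(t_{p,q})|\lesssim q^{-1/2}|h|^{1/2}$ (Corollary~\ref{thm:Global_Bound}), and sum $s$-th powers of the diameters using $\varphi(q)\leq q$; the paper does exactly this, phrased via continued-fraction convergents and the exponent $\gamma(\rho)$ of \eqref{DefinitionOfGamma}. Two details deserve care. First, the correspondence \eqref{CorrespondenceJaffard} is not the naive one with the unrestricted irrationality exponent: Jaffard's theorem requires restricting to approximations with $q_n\equiv 0,1,3\pmod 4$ after the rescaling from $\phi_D$ to $\phi$, and the paper spends Section~\ref{sec:Technical_Results} justifying this; for the upper-bound direction you need, the restricted exponent is dominated by the unrestricted one, so your inclusion $E_\alpha\subseteq\{\tau(x)\geq\tau(\alpha)\}$ survives, but this should be said. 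Second, the local bound is only proved for $|h|\lesssim q^{-2}$, so your choice of scale $|h|=q^{-\tau'}$ is legitimate only when $\tau'>2$; this works for $\alpha<3/4$ (choose $\tau'\in(2,\tau(\alpha))$), but at the endpoint $\alpha=3/4$, where $\tau(\alpha)=2$, your argument as written uses scales outside the range of validity. The paper sidesteps this by noting that $\bigcup_{\sigma\leq 3/4}D_\sigma$ is the complement of a countable set, so the $\alpha=3/4$ case is just Theorem~\ref{TheoremHausdorffDimension}. Finally, a cosmetic point: there is no linear term $L_{p,q}h$ to subtract; for $q\equiv 0,1,3\pmod 4$ the leading behavior is exactly $e_{p,q}h^{1/2}/q^{1/2}$, and for $q\equiv 2\pmod 4$ it is $O(q^{3/2}|h|^{3/2})$, which is still $O(q^{-1/2}|h|^{1/2})$ on the range $|h|\lesssim q^{-2}$, so your diameter estimate stands.
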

Observe that according to \eqref{SpectrumOfSingularitiesJaffard}, the range $\alpha \in [1/2,3/4]$ is the only one of interest. Also, Theorem~\ref{TheoremHausdorffDimensionGeneralised} generalizes Theorem~\ref{TheoremHausdorffDimension} because the union $\cup_{\sigma \leq 3/4}D_{\sigma}$ covers the whole real line except a countable number of points, which are precisely those in $D_{3/2}$, the set of points where $\phi$ is differentiable. Then, the classical results of Hardy and Gerver imply that $D_{3/2} \subset \frac{1}{2\pi}\, \mathbb{Q}$, so $\operatorname{dim}_{\mathcal{H}} \phi(\mathbb{R}) = \operatorname{dim}_{\mathcal{H}} \phi(\cup_{\sigma \leq 3/4}D_{\sigma}) \leq 4/3$.

Taking the periodic property 
\begin{equation}\label{PeriodicityPhi}
 \phi\left(t+\frac{1}{2\pi}\right) = \phi(t) + \frac{i}{2\pi}, \qquad \qquad \forall t \in \mathbb{R}, 
\end{equation}
into account, Theorems~\ref{TheoremHausdorffDimension} and \ref{TheoremHausdorffDimensionGeneralised} can be proved using the asymptotic behavior of $\phi(t_x + h) - \phi(t_x)$ when $h \to 0$ for $x \in [0,1]$, where we denote $t_x = x/(2\pi)$. If $x = p/q \in \mathbb{Q}$ is an irreducible fraction, we will also write $t_{p/q} = t_{p,q}$. The proof of Theorem~\ref{TheoremHausdorffDimensionGeneralised} is also based on the classification of irrational points according to the rate of convergence of their approximations by continued fractions.

\subsection{Auxiliary geometric result: the asymptotic behavior of $\phi$ around rationals}

The asymptotic behavior of the original generalization $\phi_D$ of Riemann's function was computed by Duistermaat \cite{Duistermaat1991}. Thanks to it, he could explain the self-similar patterns of the graph of $R$ analytically. While one can get the asymptotic behavior of $\phi$ from Duistermaat's work using the relationship \eqref{FromPhiDuistermaatToPhi}, in this paper we will prove it directly. The reasons to do this are the following:
\begin{itemize}
	\item We do the computations from a different and, arguably, more intuitive perspective.
	\item Like in \cite{Duistermaat1991}, the main vehicle will be the relationship between the modular group and the Jacobi $\theta$ function, but this new approach allows to unravel the relationships with phenomena in other fields like Gauss sums in number theory and the Talbot effect in optics.
	\item To prove Theorems~\ref{TheoremHausdorffDimension} and \ref{TheoremHausdorffDimensionGeneralised} it is enough to work with the leading terms of the asymptotics, which can easily be deduced from Duistermaat's work. Even so, we compute the asymptotic behavior of $\phi$ so that machinery to prove future results is fully and explicitly available. It is the lower order terms which capture the self-similar properties of $\phi$, so they may be critical to tackle other geometric questions. For instance, it seems reasonable to think that they will be needed to obtain a lower bound for the Hausdorff dimension. They already proved to be vital in \cite{Eceizabarrena2020} to study the geometric differentiability of $\phi$.
\end{itemize}

For the sake of clarity, let us write here a simplified introductory version of the asymptotic behavior of $\phi$. It can be classified very cleanly, since the situation around any rational can be reduced to what happens around either 0 or 1/2. For the precise expressions I refer the reader to Propositions~\ref{thm:Asymptotic_At_0}, \ref{thm:Asymptotic_At_1_2}, \ref{thm:Asymptotic_At_Q013} and \ref{thm:Asymptotic_At_Q2}.

\begin{prop}\label{TheoremAsymptoticsSimplified}
Let $p,q \in \mathbb{Z}$ such that $ 0 \leq p < q$ and $\operatorname{gcd}(p,q)=1$. The asymptotic behavior of $\phi$ around the rational point $t_{p,q} = (p/q)/(2\pi)$ depends on  $q \pmod 4$ as follows:
\begin{itemize}
	\item The asymptotic behavior of $\phi$ around 0 is 
\begin{equation}
\phi(h) = \frac32\,  \frac{1+i}{\sqrt{2\pi}} \, \left( h^{1/2} - \frac{8\pi^2}{3}\, i \, \left[ \frac16 - 2\phi\left( \frac{-1}{16\pi^2h} \right) \right] h^{3/2} + O\left( h^{5/2}\right) \right).
\end{equation} 
and if $q \equiv 0,1,3 \pmod 4$, there exists an eighth root of unity $e_{p,q}$ such that 
\begin{equation}
\phi(t_{p,q} + h) - \phi(t_{p,q}) \approx \frac{e_{p,q}}{q^{3/2}} \, \phi(q^2\, h).
\end{equation}

\item  The asymptotic behavior of $\phi$ around $1/2$ is 
\begin{equation}
\phi(t_{1,2} + h) - \phi(t_{1,2}) = -16\,\frac{1-i}{\sqrt{2}}\,   \sum_{\substack{k=1 \\ k \text{ odd}}}^{\infty}{ \frac{e^{ik^2/(16h)}}{k^{2}} }   \,h^{3/2} + O\left( h^{5/2} \right),
\end{equation} 
and if $q \equiv 2 \pmod 4$, there exists an eighth root of unity $e_{p,q}$ such that 
\begin{equation}
\phi(t_{p,q} + h) - \phi(t_{p,q}) \approx \frac{e_{p,q}}{q^{3/2}}\, \left( \phi(t_{1,2} + q^2\, h) - \phi(t_{1,2}) \right).
\end{equation}
\end{itemize} 
\end{prop}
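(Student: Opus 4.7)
The plan is to follow the cascade suggested by the four cases in the statement: first expand $\phi$ around $h=0$, then deduce the expansion around $t_{1,2}$ by a parity-of-$k$ argument, and finally reduce the general rational $t_{p,q}$ to one of these two base cases using classical quadratic Gauss sums. The residue of $q$ modulo $4$ arises naturally from the way these Gauss sums evaluate, and it decides whether the reduction lands on $0$ or on $t_{1,2}$.

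For the expansion at zero, I start from
\[
\phi(h)=i\int_0^h\bigl(\Theta(u)-1\bigr)\,du, \qquad \Theta(u)=\sum_{k\in\mathbb{Z}}e^{-4\pi^2 i k^2 u},
\]
obtained by termwise differentiation of the defining series of $\phi$. The key input is the modular identity
\[
\Theta(u)=\frac{1-i}{\sqrt{8\pi u}}\,\Theta\!\Bigl(\frac{-1}{16\pi^2 u}\Bigr),
\]
a disguised form of the Jacobi $\theta$ functional equation; I would obtain it by applying Poisson summation to $x\mapsto e^{-4\pi^2 i u x^2}$, justified by analytic continuation from $\operatorname{Im} u>0$. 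Substituting inside the integral, the $k=0$ term on the right produces the leading $h^{1/2}$ contribution, while the terms with $k\ne 0$ give oscillatory integrals $\int_0^h u^{-1/2} e^{i k^2/(4u)}\,du$ that I treat by the substitution $v = k^2/(4u)$ and one step of integration by parts, extracting a main $h^{3/2}\,e^{ik^2/(4h)}/k^2$ term with remainder $O(h^{5/2}/k^4)$. Summing in $k$ and recognising the resulting series $\sum_{k\ne 0}(e^{ik^2/(4h)}-1)/(-4\pi^2 k^2)$ as $\phi(-1/(16\pi^2 h))$ assembles the self-similar expansion stated.

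The asymptotic around $t_{1,2}=1/(4\pi)$ is then immediate from $e^{-4\pi^2 i k^2 t_{1,2}}=(-1)^k$: separating the defining series of $\phi(t_{1,2}+h)$ according to the parity of $k$ gives the algebraic identity
\[
\phi(t_{1,2}+h)-\phi(t_{1,2})=\tfrac{1}{2}\phi(4h)-\phi(h),
\]
and plugging in the expansion at $0$ makes the two $h^{1/2}$ contributions cancel exactly (because $\tfrac12\sqrt{4h}=\sqrt h$), while the $h^{3/2}$ parts combine into a sum over odd indices via the elementary relation $S_{\mathrm{odd}}(a)=S_{\mathrm{all}}(a)-\tfrac14 S_{\mathrm{all}}(4a)$ between $S_{\mathrm{all}}(a)=\sum_{k\ge 1}e^{iak^2}/k^2$ and its odd-indexed restriction. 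To handle a general $t_{p,q}$ I would write $k=mQ+r$ for the period $Q$ of $r\mapsto e^{-2\pi i p r^2/q}$; the phase pulls out of the inner sum over $m$, which after completing the square reassembles into either $\phi(q^2h)$ or $\phi(t_{1,2}+q^2 h)-\phi(t_{1,2})$ depending on whether $pr^2/q$ sits in $\mathbb{Z}$ or in $\tfrac12\mathbb{Z}\setminus\mathbb{Z}$. The outer sum is a quadratic Gauss sum, and its classical evaluation as $\sqrt q$ times an eighth root of unity $e_{p,q}$ sorts the rationals according to $q\bmod 4$; the class $q\equiv 2\pmod 4$ is forced to anchor at $t_{1,2}$ rather than at $0$ because the standard Gauss sum vanishes there and a shifted variant has to be used.

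The main obstacle I anticipate is the rigorous justification of the $\Theta$ modular identity on the real axis, where the defining series is only conditionally convergent and Poisson summation cannot be applied directly. I plan to handle this either by analytic continuation from $\operatorname{Im} u>0$ with a boundary limit taken inside $i\int_0^h$, or by first swapping integration and summation and applying the transformation to the resulting better-behaved object. A secondary but delicate technical point is the consistent bookkeeping of signs and eighth roots in the Gauss-sum step, where quadratic reciprocity and the odd/even dichotomy for $q$ both enter and have to be tracked uniformly through the four cases.
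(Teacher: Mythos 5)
Your treatment of the two base cases is essentially sound. At $0$ you insert the Jacobi functional equation for $\Theta$ inside the integral and estimate the resulting oscillatory integrals (Duistermaat's route), whereas the paper applies Poisson summation to $g(x)=(e^{-ix^2}-1)/x^2$ and expands the resulting complementary error function; both yield \eqref{eq:Asymptotic_0_Complete}, and your plan to justify the functional equation on the real axis by a boundary limit from $\operatorname{Im}u>0$ is exactly what the paper does in Section~\ref{Section_Rationals}. One bookkeeping slip: the correct identity is $\phi(h)=i\int_0^h\Theta(u)\,du$, not $i\int_0^h(\Theta(u)-1)\,du$; the $k=0$ term of $\Theta$ integrates to the linear term $ih$ that is genuinely present in \eqref{Phi}, and your extra $-1$ introduces a spurious $-ih$ sitting between the $h^{1/2}$ and $h^{3/2}$ terms of the claimed expansion. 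The passage to $t_{1,2}$ via the even/odd splitting and $\phi(t_{1,2}+h)=\tfrac12\phi(4h)-\phi(h)+\mathrm{const}$ is identical to the paper's \eqref{IdentityFrom12To0}.

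The gap is in the reduction of a general $t_{p,q}$. After writing $k=mq+r$ and pulling out $e^{-2\pi i p r^2/q}$, the inner sum $\sum_m\bigl(e^{-4\pi^2 i(mq+r)^2h}-1\bigr)/(-4\pi^2(mq+r)^2)$ does \emph{not} reassemble into $\phi(q^2h)$ by completing a square: for $r\neq 0$ it is a shifted lattice sum, and only the $r=0$ term equals $q^{-2}\phi(q^2h)$. To proceed one must apply Poisson summation (equivalently, the theta functional equation with characteristics) to each inner sum, converting the shift $r$ into a linear phase $e^{2\pi i nr/q}$ in the dual variable $n$; only then does the $r$-sum become the generalized Gauss sum $G(-p,n,q)$, whose evaluation (inverting $2p$ modulo $q$, whence the mod-$4$ casework, and whose vanishing at $q\equiv 2\pmod 4$ is what forces the $t_{1,2}$ anchor) produces a quadratic phase $e^{2\pi i p' n^2/q}$ in $n$. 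The reassembled object is then $\phi$ evaluated near $t_{p',q}-1/(16\pi^2q^2h)$, i.e.\ a self-similar relation to \emph{another} rational rather than directly to $0$ or $t_{1,2}$; one must either iterate (this is precisely the paper's Algorithm~\ref{thm:Algorithm_For_Reduction}, whose iterations the paper packages into a single $\theta$-modular transformation built from the continued fraction of $2p/q$ and then makes rigorous by integration by parts in \eqref{eq:Valid_For_Any_Gamma}) or settle for matching leading orders only. Your proposal skips this entire layer, and the stated criterion ``$pr^2/q\in\mathbb Z$ versus $\tfrac12\mathbb Z\setminus\mathbb Z$'' is not the operative dichotomy — the split is governed by $q\bmod 4$ through $G(-p,0,q)$, as you correctly say only at the very end. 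The Gauss-sum route is viable (it is the one the paper presents as heuristics in Subsection~\ref{Section_Heuristics}), but as written the middle of your reduction would not go through.
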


The second term in the asymptotic behavior around 0 captures the self-similar patterns of $\phi$ that can be identified in Figure~\ref{FIG_Curva}. Most importantly, this pattern appears around every rational $t_{p,q}$ with $q \equiv 0,1,3 \pmod 4$. This should play an important role to compute a lower bound for its Hausdorff dimension, but as already said, it is not needed to prove Theorems~\ref{TheoremHausdorffDimension} and \ref{TheoremHausdorffDimensionGeneralised}. Indeed, the following corollary with the leading order term is enough. 
\begin{cor}\label{thm:Global_Bound}
Let $p, q \in \mathbb{N}$ such that $\operatorname{gcd}(p,q) = 1$. Let also $M >0$. Then, there exists $C_M>0$ independent of $p$ and $q$ such that 
\begin{itemize}

	\item if $q \equiv 0,1,3 \pmod{4}$, 	
\begin{equation}
\left|  \phi(t_{p,q}+h) - \phi(t_{p,q}) \right|  \leq C_M \frac{|h|^{1/2}}{q^{1/2}}, \qquad \text{ whenever } |h| < \frac{M}{q^2}.
\end{equation}

	\item if  $q \equiv 2 \pmod{4}$, 
\begin{equation}
\left| \phi(t_{p,q}+h) - \phi(t_{p,q}) \right|  \leq C_M  q^{3/2}\, |h|^{3/2}, \qquad \text{ whenever } |h| \leq \frac{M}{q^2}. 
\end{equation}  

\end{itemize}
\end{cor}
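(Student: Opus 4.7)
The plan is to derive the corollary directly from the reduction formulas in Proposition~\ref{TheoremAsymptoticsSimplified} by bounding the two ``base'' asymptotic expressions (one near $0$, one near $t_{1,2}$) and then transporting those bounds to arbitrary $t_{p,q}$ via the self-similarity relations. The key point is that the ``$\approx$'' in Proposition~\ref{TheoremAsymptoticsSimplified} is made precise in Propositions~\ref{thm:Asymptotic_At_0}--\ref{thm:Asymptotic_At_Q2}, where remainder terms of the form $O(h^{5/2})$ are provided, and these will turn out to be absorbed into the leading term whenever $|h| \le M/q^2$.

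First I would handle the case $q \equiv 0,1,3 \pmod 4$. The asymptotic around $0$ shows $|\phi(H)| \le C\, |H|^{1/2}$ for $|H|\le M$, since the leading term is a constant multiple of $H^{1/2}$ and the higher-order piece is $O(|H|^{3/2}) \le M\, |H|^{1/2}$. Applying the reduction formula $\phi(t_{p,q}+h)-\phi(t_{p,q}) = q^{-3/2}\, e_{p,q}\, \phi(q^2 h) + \text{error}$ with $|e_{p,q}|=1$ and substituting $H = q^2 h$ gives
\[
|\phi(t_{p,q}+h)-\phi(t_{p,q})| \le C\, q^{-3/2}\, |q^2 h|^{1/2} + \text{error} = C\, |h|^{1/2}/q^{1/2} + \text{error}.
\]
The error contribution from the precise statements in Propositions~\ref{thm:Asymptotic_At_0} and \ref{thm:Asymptotic_At_Q013} is at worst $q^{-3/2}\, O(|q^2 h|^{5/2}) = O(q^{7/2}|h|^{5/2})$, which for $|h| \le M/q^2$ is bounded by $M^2\, |h|^{1/2}/q^{1/2}$, and therefore absorbed into $C_M\, |h|^{1/2}/q^{1/2}$.

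For $q \equiv 2 \pmod 4$, the asymptotic around $t_{1,2}$ yields $|\phi(t_{1,2}+H)-\phi(t_{1,2})| \le C\, |H|^{3/2}$ for $|H| \le M$: the oscillating sum over odd $k$ is bounded absolutely by $\sum_{k\,\text{odd}} k^{-2} < \infty$, so the leading term is $O(|H|^{3/2})$, and the remainder $O(|H|^{5/2})$ is also controlled by $M\, |H|^{3/2}$. Inserting this into the reduction $\phi(t_{p,q}+h)-\phi(t_{p,q}) = q^{-3/2}\, e_{p,q}\, (\phi(t_{1,2}+q^2 h)-\phi(t_{1,2})) + \text{error}$ yields the claimed bound $C_M\, q^{3/2}\, |h|^{3/2}$ by the same error-absorption computation as in the previous case.

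I don't foresee any genuine obstacle: the corollary is essentially bookkeeping on top of Proposition~\ref{TheoremAsymptoticsSimplified}. The only subtlety is verifying that the implicit constants in the ``$\approx$'' of Proposition~\ref{TheoremAsymptoticsSimplified} and in the $O(\,\cdot\,)$ remainders of the detailed asymptotic statements are genuinely \emph{uniform in $p$ and $q$}; this is ensured by the fact that, after the modular reduction to $0$ or $t_{1,2}$, the residual terms depend on $p,q$ only through unimodular factors such as $e_{p,q}$, so the constants come solely from the base asymptotics and are independent of the rational $p/q$.
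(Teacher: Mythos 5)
There is a genuine gap, and it is precisely the point that Corollaries~\ref{thm:Global_Bound_Q013} and \ref{thm:Global_Bound_Q2} are designed to overcome. Your error-absorption step uses the remainder $O(\tilde q^{7/2}|h|^{5/2})$ from \eqref{eq:Asymptotic_At_Q013_Principal} (and its analogue \eqref{eq:Asymptotic_At_Q2_Principal}) throughout the whole range $|h|<M/q^2$. But those expansions in powers of $h$ are stated, and are only valid, for $|h|\leq \tfrac{1}{4\pi|c_\pm|/\tilde q}\,\tilde q^{-2}$, which, since $\tilde q\leq |c_\pm|\leq 4\tilde q$, can be as small as $\tfrac{1}{16\pi}\,\tilde q^{-2}$. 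The restriction comes from the Taylor expansions \eqref{TaylorSeriesOfB} of powers of $b(h)$ in powers of $h$, which require $4\pi|c_\pm\tilde q h|<1$. So for any $M\gtrsim 1/(16\pi)$ your argument invokes the $h$-expansion outside its domain of validity, and the bound $O(q^{7/2}|h|^{5/2})\leq M^2|h|^{1/2}/q^{1/2}$ is not justified on the part of the range $\tfrac{1}{16\pi q^2}\lesssim|h|<\tfrac{M}{q^2}$. A related inaccuracy: the residual terms do not depend on $p,q$ ``only through unimodular factors'' — they also depend on $c_\pm$, which varies with $p/q$ (though it is controlled by $|c_\pm|\leq 4\tilde q$), and this is exactly what produces the restricted range above.

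The paper's proof avoids this by working with the intermediate expansion \eqref{AsymptoticsWithB013} in the variable $b=b(h)$ of \eqref{eq:Definition_Of_B} rather than in $h$: that expansion converges for \emph{all} $h$ because $4\pi c_\pm b(h)/\tilde q<1$ always, and it can be truncated uniformly once $4\pi c_\pm b(h)/\tilde q<\delta_M$ for a fixed $\delta_M<1$, a condition which \emph{is} guaranteed on the whole range $|h|<M/\tilde q^2$ by choosing $\delta_M$ with $M=\tfrac{\delta_M}{16\pi(1-\delta_M)}$. This yields $|\phi(t_{p,q}+h)-\phi(t_{p,q})|\leq C_{\delta_M}|b(h)|^{1/2}/\tilde q^{3/2}$, and one then converts to the claimed bound via $|b(h)|\leq\tilde q^2|h|$ (indeed $|b(h)|\leq\tilde q^2|h|/2$ when $4\pi|c_\pm\tilde q h|\geq 1$; in the complementary regime $|h|<\tilde q^{-2}$ one may use \eqref{eq:Asymptotic_At_Q013_Principal} directly, as you do). To repair your proof you need this two-regime argument, or some other device that covers $|h|$ up to $M/q^2$ for arbitrary $M$; the base asymptotics at $0$ and $t_{1,2}$ plus the ``$\approx$'' of Proposition~\ref{TheoremAsymptoticsSimplified} alone do not suffice.
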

This corollary corresponds to Corollaries~\ref{thm:Global_Bound_Q013} and \ref{thm:Global_Bound_Q2} in the main text.


\subsection{Discussion on a lower bound for the Hausdorff dimension}

The theorems in this paper are a first approach to the Hausdorff dimension of Riemann's non-differentiable function in its version shown in Figure~\ref{FIG_Curva}, which represents the trajectory of a polygonal vortex filament. Of course, the objective now turns into knowing whether the exact value of the dimension is precisely 4/3.
Some difficulties with respect to previous works are the following. First, dealing with Riemann's function is more complicated than working with Weierstrass' function due to its quadratic rather than exponential convergence. Also, Figure~\ref{FIG_Curva} is not a graph, so the control over the abscissa direction is lost. What is more, the set self-intersects many times, in a way that seems difficult to measure. Regarding self-similarity, unlike exactly self-similar fractals that have finitely many scaling laws, Figure~\ref{FIG_Curva} and specially the fact that the self-similar term in  Proposition~\ref{TheoremAsymptoticsSimplified} is multiplied by the continuously decreasing term $h^{3/2}$ suggest that $\phi$ may have a continuum of scaling laws.

There are some clues that vaguely suggest that the dimension might be 4/3, like the fact that the cover used in the proof of Theorem~\ref{TheoremHausdorffDimension} would no longer cover the set if the diameters are made slightly smaller and that the estimates used are sharp. A possible line of attack comes from deepening in the study of the multifractal setting of Theorem~\ref{TheoremHausdorffDimensionGeneralised}. In fact, analyzing the subsets $D_\alpha$ for a fixed $\alpha$ means studying irrationals with a fixed irrationality exponent, and this could be a way to isolate a set that has a single scaling, or at least a simpler scaling law.  

What we can be more convinced is the dimension being strictly greater than 1, due to the self-similar patterns already mentioned.
Even showing this would be an interesting contribution.

\subsection{Structure of the document}

Since Corollary~\ref{thm:Global_Bound} suffices to tackle Theorems~\ref{TheoremHausdorffDimension} and \ref{TheoremHausdorffDimensionGeneralised}, we begin by proving them in Section~\ref{Section_HausdorffDimension}. In Section~\ref{sec:Technical_Results} we prove some technical results corresponding to the multifractal setting of Theorem~\ref{TheoremHausdorffDimensionGeneralised}. In Section~\ref{sec:Asymptotics_Heuristics}, we explain the heuristics on how the asymptotic behavior of $\phi$ around rationals can be reduced to the asymptotic behavior around either 0 or $1/2$. We also explain how such reduction is deeply related to Gauss sums and the Talbot effect. Then, in Section~\ref{Section_BaseCases} we compute the asymptotics around 0 and $1/2$, and in Section~\ref{Section_Rationals} we compute the asymptotics around rationals by making the already mentioned reduction rigorous.

%
%

\begin{acknowledgements*}

Special thanks to Luis Vega. I would also like to thank Fernando Chamizo, Albert Mas and Xavier Tolsa for interesting and useful discussions. 

The bulk of this work was developed while I was working at BCAM - Basque Center for Applied Mathematics. 

This research is supported by the Ministry of Education, Culture and Sport (Spain) under grant  FPU15/03078 - Formaci\'on de Profesorado Universitario, by the ERCEA under the Advanced Grant 2014 669689 - HADE and also by the Basque Government through the BERC 2018-2021 program and by the Ministry of Science, Innovation and Universities: BCAM Severo Ochoa accreditation SEV-2017-0718. It is also supported by the Simons Foundation Collaboration Grant on Wave Turbulence (Nahmod’s Award ID 651469).
\end{acknowledgements*}

\section{The Hausdorff dimension}\label{Section_HausdorffDimension}

In this last section, we prove Theorems~\ref{TheoremHausdorffDimension} and \ref{TheoremHausdorffDimensionGeneralised} based on Corollary~\ref{thm:Global_Bound}. Before going into the proofs, we recall that given $d \geq 0$, the $d$-Hausdorff content of diameter $\delta>0$ of a set $A \subset \mathbb{R}^n$ is 
\begin{equation}\label{HausdorffContent}
\mathcal{H}_{\delta}^d(A) = \inf\left\{ \sum_{i \in I}\left(\operatorname{diam}U_i\right)^d \mid A \subset \bigcup_{i \in I}{U_i}, \quad \operatorname{diam}U_i < \delta\quad \forall i \in I, \quad I \text{ countable} \right\},
\end{equation}
where the sets $U_i$ can be chosen to be open if needed. This is a decreasing function of $\delta$, and taking the limit $\delta \to 0$ yields the $d$-Hausdorff measure of $A$,
\begin{equation}\label{HausdorffMeasure}
\mathcal{H}^d(A) = \lim_{\delta \to 0}\mathcal{H}_{\delta}^d(A) = \sup_{\delta > 0}\mathcal{H}_{\delta}^d(A) .
\end{equation}
Finally, the Hausdorff dimension of $A$ is 
\begin{equation}\label{HausdorffDimension}
\operatorname{dim}_{\mathcal{H}}A = \inf\{\,d\, \mid \, \mathcal{H}^d(A) = 0\} = \sup\{\,d\, \mid \, \mathcal{H}^d(A) = \infty\}.
\end{equation}

\subsection{Proof of Theorem~\ref{TheoremHausdorffDimension}}\label{Subsection_TheoremHausdorffDimension}

The lower bound of Theorem~\ref{TheoremHausdorffDimension} is just a consequence of $\phi$ being a continuous and non-constant curve. Indeed, there exist $s,t \in \mathbb{R}$ with $s < t$ such that $\phi(s) \neq \phi(t)$.  Let $[\phi(s),\phi(t)] \subset \mathbb{R}^2$ denote the line segment connecting $\phi(s)$ and $\phi(t)$, and $L$ its infinite extension. Then, the orthogonal projection $P_{\perp}:\phi([s,t]) \to L$ is a Lipschitz map, so 
\begin{equation}
\operatorname{dim}_{\mathcal{H}}P_{\perp}\phi([s,t]) \leq \operatorname{dim}_{\mathcal{H}}\phi([s,t]),
\end{equation} 
see, for instance, \cite[Proposition 3.3]{Falconer2014}.
Since the continuity of $\phi$ implies $[\phi(s), \phi(t)] \subset P_{\perp}(\phi([s,t]))$, we get 
\begin{equation}
 \operatorname{dim}_{\mathcal{H}}\phi([0,1/(2\pi)]) \geq \operatorname{dim}_{\mathcal{H}}\phi([s, t]) \geq \operatorname{dim}_{\mathcal{H}}P_{\perp}\phi([s,t]) \geq \operatorname{dim}_{\mathcal{H}} [\phi(s), \phi(t)] = 1. 
\end{equation} 

Regarding the upper bound, it is enough to work with the set $\phi(\frac{1}{2\pi}\left((0,1)\cap\mathbb{I}\right))$, where $\mathbb{I}$ stands for the set of irrational numbers. This is because the periodic property \eqref{PeriodicityPhi} implies
\[  \phi(\mathbb{R}) = \bigcup_{k \in \mathbb{Z}}{\phi([k,k+1/2\pi])} = \bigcup_{k\in\mathbb{Z}}{ \left( \phi([0,1/2\pi]) + \frac{i}{2\pi}\, k \right) },  \]
and since the Hausdorff dimension of a countable union of sets is the supremum among the Hausdorff dimensions of each of the sets (see, for instance, \cite[Chapter 4]{Mattila1995}), we have
\begin{equation}
\operatorname{dim}_{\mathcal{H}}\phi(\mathbb{R}) = \sup_{k \in \mathbb{Z}} \operatorname{dim}_{\mathcal{H}} \left( \phi([0,1/2\pi]) + \frac{i}{2\pi}\, k \right).
\end{equation}
Of course, all such sets have the same Hausdorff dimension, so it is enough to work with, say, $k=0$. On the other hand, the set of rational points is countable and has therefore $\mathcal{H}^d$-measure zero for every $d >0$. Thus, $\phi([0,1/2\pi])$ has the same $\mathcal{H}^d$-measure as $\phi(\frac{1}{2\pi}\left((0,1)\cap\mathbb{I}\right))$. As a consequence, $\operatorname{dim}_{\mathcal{H}}\phi(\mathbb{R}) = \operatorname{dim}_{\mathcal{H}}\phi(\mathcal{I})$, where $\mathcal{I} = \frac{1}{2\pi}\left( (0,1) \cap \mathbb{I} \right)$.

It will be enough to find a proper countable cover of the set $\phi(\mathcal{I})$. First, we see that
\begin{equation}\label{CoverOfIrrationals}
  (0,1) \cap \mathbb{I} \, \, \,  \subset \bigcup_{\substack{1 \leq p < q \\ \operatorname{gcd}(p,q)=1 \\ q \geq Q_0}}{ B\left( \frac{p}{q}, \frac{1}{q^2} \right)  }, \qquad \forall Q_0 \in \mathbb{N}. 
\end{equation}
This cover is a direct consequence of the theory of continued fractions. Let $\rho \in (0,1) \cap \mathbb{I}$ and $\rho_n = p_n/q_n$ be its convergents by continued fractions for all $n \in \mathbb{N}$. These convergents are irreducible rationals such that $\lim_{n \to \infty}{q_n}=+\infty$ and $|\rho - p_n/q_n| < q_n^{-2}$ for every $n \in \mathbb{N}$. Consequently, for no matter how large $Q_0 \in \mathbb{N}$, we can find $N_0 \in \mathbb{N}$ such that $q_n \geq Q_0$ and  $|\rho - p_n/q_n| < q_n^{-2}$ for every $n > N_0$, hence \eqref{CoverOfIrrationals}.

Let now the asymptotics in Corollary~\ref{thm:Global_Bound} with $p = p_n$ and $q=q_n$ be evaluated at $h = h_n = t_\rho - t_{p_n,q_n}$ such that $t_{p_n,q_n} + h_n = t_{\rho}$. Then, $|h_n| < 1/(2\pi q_n^2)$, which implies $q_n^{3/2}|h_n|^{3/2} < q_n^{-1/2}|h_n|^{1/2}$. Thus, there exists $C>0$ such that 
\begin{equation}\label{UpperBoundSimplified}
|\phi(t_{\rho}) - \phi(t_{p_n,q_n})| \leq C\,\frac{|h_n|^{1/2}}{q_n^{1/2}} < \frac{C}{q_n^{3/2}}, \qquad \forall n \in \mathbb{N}. 
\end{equation} 
Thus, \eqref{CoverOfIrrationals} is translated to the image of $\phi$ because \eqref{UpperBoundSimplified} shows that
\begin{equation}\label{CoverOfirrationalsPhi}
  \phi ( \mathcal{I} ) \subset \bigcup_{\substack{1 \leq p < q \\ \operatorname{gcd}(p,q)=1 \\ q \geq Q_0}}{ B\left( \phi\left(t_{p,q}\right), \frac{C}{q^{3/2}} \right)  }, \qquad \forall Q_0 \in \mathbb{N}.
\end{equation}
Let $d >0$. This cover for $\phi(\mathcal I )$ yields an upper bound of the $d$-Hausdorff content \eqref{HausdorffContent} of diameter $\delta <C/Q_0^{3/2}$, since we have
\begin{equation}\label{eq:Estimation_Of_The_Hausdorff_Content}
\mathcal{H}^d_{C/Q_0^{3/2}}(\phi(\mathcal{I}))  \leq  \sum_{\substack{1 \leq p < q \\ \operatorname{gcd}(p,q)=1 \\ q \geq Q_0}}{ \left( \operatorname{diam} B\left( \phi\left(t_{p,q}\right), \frac{C}{q^{3/2}} \right) \right)^d } = C^d\,\sum_{ q = Q_0}^{\infty}{ \frac{\varphi(q)}{q^{3d/2}} }   \leq C^d\,\sum_{ q = Q_0}^{\infty}{ \frac{1}{q^{3d/2 - 1}} } 
\end{equation}
for every $Q_0 \in \mathbb{N}$.  Here, $\varphi$ is Euler's totient function, whose trivial but in general best bound $\varphi(q) < q$ we used above. Then, take the limit $Q_0 \to \infty$ so that
\begin{equation}
\mathcal{H}^d(\phi(\mathcal{I})) = \lim_{Q_0 \to \infty} \mathcal{H}^d_{C/Q_0^{3/2}}(\phi(\mathcal{I})) \leq  C^d\, \lim_{Q_0 \to \infty} \sum_{ q = Q_0}^{\infty}{ \frac{1}{q^{3d/2 - 1}} } .
\end{equation}
The sum inside the limit converges if and only if $3d/2 - 1 > 1$, or equivalently is and only if $d > 4/3$, so
\begin{equation}
\mathcal{H}^d(\phi(\mathcal{I})) = 0, \qquad \forall d > 4/3.
\end{equation}
According to the definition of the Hausdorff dimension \eqref{HausdorffDimension},  this implies $ \operatorname{dim}_{\mathcal{H}}( \phi(\mathcal{I} ))  \leq4/3 $.
\qed

\begin{rem}
Using the Dirichlet approximation theorem instead of the continued fraction theory to obtain a cover like \eqref{CoverOfIrrationals} gives some extra information about $\mathcal{H}^{4/3}(\phi(\mathcal{I}))$. Dirichlet's theorem states that given a natural number $N \in \mathbb{N}$ and any irrational $\rho$, there exist $p,q \in \mathbb{Z}$, such that $1 \leq q \leq N$ and $|\rho - p/q| < 1/(qN)$. This implies that 
\begin{equation}
(0,1) \cap \mathbb I \,\,\, \subset \bigcup_{\substack{ 1 \leq  q \leq N \\ 1 \leq p \leq q }} B \left(\frac{p}{q}, \frac{1}{qN}\right), \qquad \forall N \in \mathbb{N}.
\end{equation}
Fix $N \in \mathbb N$ and let $p_N(\rho)/q_N(\rho)$ be the approximation of $\rho$ corresponding to $N$. Plug this in \eqref{UpperBoundSimplified} so that we get
\begin{equation}
\left| \phi(t_\rho) - \phi(t_{p_N(\rho),q_N(\rho)}) \right|   \leq C\, \frac{ |\rho - p_N(\rho)/q_N(\rho)|^{1/2} }{ q_N(\rho)^{1/2} } \leq  \frac{C}{ q_N(\rho)\, N^{1/2}  },
\end{equation}
which means that 
\begin{equation}
\phi(\mathcal{I}) \subset \bigcup_{\substack{ 1 \leq q \leq N \\ 1 \leq p \leq q }} B \left( \phi(t_{p,q}), \frac{C}{q N^{1/2}} \right), \qquad \forall N \in \mathbb N. 
\end{equation}
Moreover, the diameters of the balls satisfy $ 1/(qN^{1/2}) \leq N^{-1/2}$. Thus, for $1 \leq d < 2$, 
\begin{equation}
\mathcal{H}^d_{N^{-1/2}}(\phi(\mathcal I)) \leq \sum_{\substack{ 1 \leq q \leq N \\ 1 \leq p \leq q }} \frac{C^d}{(qN^{1/2})^d} = \frac{C^d}{N^{d/2}}\,\sum_{ q=1 }^N \frac{1}{q^{d-1}} \leq C^d\,  \frac{N^{2- 3d/2}}{(2-d)},
\end{equation}
which shows as before that $\mathcal{H}^d(\phi(\mathcal I)) = \lim_{N\to \infty}  \mathcal{H}^d_{N^{-1/2}}(\phi(\mathcal I)) = 0$ for every $d > 4/3$, but also and more interestingly, 
\begin{equation}
\mathcal{H}^{4/3}(\phi(\mathcal I)) = \lim_{N\to \infty}  \mathcal{H}^{4/3}_{N^{-1/2}}(\phi(\mathcal I)) \leq \frac{C^d}{2 - 4/3} = \frac32 C^d.
\end{equation}
\end{rem}

\subsection{Proof of Theorem~\ref{TheoremHausdorffDimensionGeneralised}}\label{Subsection_TheoremHausdorffDimensionGeneralised}
We follow the structure of the proof of Theorem~\ref{TheoremHausdorffDimension}, but we use deeper results that relate the rate of convergence of the approximations by continued fractions with the H\"older regularity coefficients defined in \eqref{MaximalHolderRegularity}. 

Let $p_n/q_n$ be the $n$-th covergent by continued fractions of $\rho \in (0,1) \cap \mathbb{I}$. As above, $| \rho - p_n/q_n | < q_n^{-2}$, but now we want to quantify how smaller than $q_n^{-2}$ this error is. For that, define the sequence $(\gamma_n)_{n \in \mathbb{N}}$ as
\begin{equation}\label{DefinitionOfGammaN}
\left| \rho - \frac{p_n}{q_n} \right| = \frac{1}{q_n^{\gamma_n}} , \qquad \forall n \in \mathbb{N}.
\end{equation}  
It is clear that $\gamma_n > 2$ for every $n \in \mathbb{N}$. Of all convergents, let us work only with the approximations with $q_n \equiv 0,1,3 \pmod{4}$, which are always infinitely many (see Lemma~\ref{lem:Continued_Fraction_Auxiliary}), and define 
\begin{equation}\label{DefinitionOfGamma}
\begin{split}
\gamma(\rho) & = \sup\left\{ \tau \mid \gamma_n \geq \tau \text{ for infinitely many } n \in \mathbb{N} \text{ such that } q_n \equiv 0,1,3 \,(\text{mod } 4) \right\} \\
&  = \limsup_{\substack{n \to \infty \\ q_n \equiv 0,1,3 \,(\text{mod } 4)}} \gamma_n, 
\end{split}
\end{equation} 
There is a direct connection between $\gamma$ and the H\"older exponent $\alpha_\phi$ \eqref{MaximalHolderRegularity} given by
\begin{equation}\label{CorrespondenceJaffard}
\alpha_\phi(t_{\rho})  = \frac12 + \frac{1}{2\gamma(\rho)}. 
\end{equation} 
This identity is an adaptation of the original result for $\phi_D$ shown by Jaffard in \cite{Jaffard1996}, see Subsection~\ref{sec:CorrespondenceJaffard} for details and proof. 

The idea of the proof is that the definition of $\gamma_n$ allows to improve the bound in \eqref{UpperBoundSimplified} because now, if $h_n = t_\rho - t_{p_n,q_n}$, 
\begin{equation}\label{eq:Estimations_With_Gamma_n}
|\phi(t_{\rho}) - \phi(t_{p_n,q_n})| \leq C\,\frac{|h_n|^{1/2}}{q_n^{1/2}} = \frac{C}{q_n^{(1+\gamma_n)/2}} , \qquad \forall n \in \mathbb{N},
\end{equation}
which is smaller than $C/q_n^{3/2}$, and then $\gamma(\rho)$ can be used to control the exponent $(1+\gamma_n)/2$. Thus, we take the set of points with fixed $\gamma(\rho)=\gamma$ and we cover it like in \eqref{CoverOfirrationalsPhi} but with balls of smaller diameter, yielding a better estimation for the Hausdorff dimension. Finally, the correspondence \eqref{CorrespondenceJaffard} connects these sets with the sets where $\phi$ has a given regularity. 

 Define the sets of points of a determinate coefficient $\beta \geq 2$,
\begin{equation}\label{CorrespondenceBetweenSets}
R_{\beta}  = \left\{ t_\rho \in \mathcal I  \mid \gamma(\rho) = \beta  \right\} = D_{\frac12 + \frac{1}{2\beta}} \cap \mathcal{I},
\end{equation} 
where $t_\rho = \rho/2\pi$, $D_\sigma = \{ x \in \mathbb{R} \, \mid \, \alpha(x) = \sigma  \}$ and the last equality holds because of \eqref{CorrespondenceJaffard}. Let $\beta > 2$ and $t_{\rho} \in \cup_{\sigma \geq \beta}R_{\sigma}$ so that $\gamma(\rho) \geq \beta$. Then, choose $\epsilon >0$ such that $\gamma(\rho) - \epsilon \geq \beta - \epsilon > 2$. By definition of $\gamma(\rho)$, the set of indices 
\begin{equation}
A_{\rho, \epsilon} = \{ n \in \mathbb{N} \mid q_n \equiv 0,1,3\, (\text{mod } 4) \text{ and } \gamma_n > \beta - \epsilon \} 
\end{equation} 
is infinite for all $\epsilon > 0$ as above, and hence, from \eqref{eq:Estimations_With_Gamma_n} we get
\begin{equation}
 |\phi(t_{\rho}) - \phi(t_{p_n,q_n})|  <  \frac{C}{q_n^{(1+\beta - \epsilon)/2}} , \qquad \forall n \in A_{\rho,\epsilon}.  
\end{equation} 
As in \eqref{CoverOfirrationalsPhi}, this shows that
\[  \phi \Big(  \bigcup_{\sigma \geq \beta}R_{\sigma}  \Big) \subset \bigcup_{\substack{ 1 \leq p < q \\ \operatorname{gcd}(p,q) = 1 \\ q \geq Q_0 }}B\left(  \phi(t_{p,q}),\frac{C}{q^{(1+\beta - \epsilon)/2}} \right), \qquad \forall Q_0 \in \mathbb{N}.  \]
Repeating the same procedure as in \eqref{eq:Estimation_Of_The_Hausdorff_Content}, we get
\[ \mathcal{H}^d \left( \phi \Big(  \bigcup_{\sigma \geq \beta}R_{\sigma}  \Big) \right) \leq C^d  \lim_{Q_0 \to \infty}\sum_{q=Q_0}^{\infty}{  \frac{1}{q^{\frac{1+\beta - \epsilon}{2}d - 1}} } = 0, \qquad \forall d > \frac{4}{1+\beta - \epsilon},  \]
so $\operatorname{dim}_{\mathcal{H}}\phi \Big(  \bigcup_{\sigma \geq \beta}R_{\sigma}  \Big) \leq d$ for every $d > 4/(1+\beta-\epsilon)$ and every $\epsilon >0$. Since this is valid for every $0 < \epsilon < \beta - 2$, let $\epsilon \to 0$ to we conclude that 
\[  \operatorname{dim}_{\mathcal{H}}\phi \Big( \bigcup_{\sigma \geq \beta}R_{\sigma}  \Big) \leq \frac{4}{1+\beta}, \qquad \forall \beta > 2.  \]
By the correspondences \eqref{CorrespondenceJaffard} and \eqref{CorrespondenceBetweenSets}, we get the result for the H\"older regularity sets,
\[  \operatorname{dim}_{\mathcal{H}}\phi \Big( \mathcal{I} \cap \bigcup_{\sigma \leq \alpha}D_{\sigma}  \Big) \leq \frac{4\alpha-2}{\alpha},  \qquad \text{ for every } \quad \frac12 \leq \alpha < \frac34.    \]
This is also valid for $\alpha = 3/4$. Indeed, every irrational $\rho$ satisfies $\gamma(\rho) \geq 2$, which according to \eqref{CorrespondenceJaffard} means that $\alpha(t_\rho) \leq 3/4$. This means that all the irrational $t_\rho$ are in $\mathcal{I} \cap \bigcup_{\sigma \leq 3/4}D_{\sigma}$, so that the difference with the whole interval  $ [0,1/(2\pi)]$ is a subset of the rationals $\{ t_x \mid x \in \mathbb{Q} \cap [0,1] \}$, at most a countable set which has Hausdorff dimension 0. Hence, according to Theorem~\ref{TheoremHausdorffDimension}, 
\begin{equation}
\operatorname{dim}_{\mathcal{H}}\phi \Big( \mathcal{I} \cap \bigcup_{\sigma \leq 3/4}D_{\sigma}  \Big)  = \operatorname{dim}_{\mathcal{H}} \phi([0,1/(2\pi)]) \leq 4/3.
\end{equation}
Like in the proof of Theorem~\ref{TheoremHausdorffDimension}, the theorem follows because the periodic property \eqref{PeriodicityPhi} implies that  $\phi\left( \bigcup_{\sigma \leq \alpha}D_{\sigma}\right)$ is a countable union of translates of $\phi \left( \mathcal{I} \cap \bigcup_{\sigma \leq \alpha}D_{\sigma}  \right)$. Also, the first inequality of the theorem is just a consequence of the inclusion $ D_{\alpha} \subset \bigcup_{\sigma \leq \alpha}D_{\sigma} $.

\section{Technical results for Section~\ref{Section_HausdorffDimension}}\label{sec:Technical_Results}

\subsection{Proof of the correspondence (\ref{CorrespondenceJaffard})}\label{sec:CorrespondenceJaffard}

In \cite{Jaffard1996}, Jaffard proved 
\begin{equation}\label{eq:Connection_Alpha_Tau_Proof}
\alpha_{\phi_D}(\rho) = \frac12  + \frac{1}{2\tau(\rho)},
\end{equation}
where $\phi_D$ is Duistermaat's version \eqref{PhiDuistermaat}, $\alpha_{\phi_D}$ is the H\"older exponent of $\phi_D$ defined in \eqref{MaximalHolderRegularity} and 
\begin{equation}\label{eq:Definition_Of_Tau_Proof}
\tau(x) = \sup \left\{ \tau \, : \, \Big| x - \frac{p_n}{q_n} \Big| < \frac{1}{q_n^\tau}, \, \text{ for infinitely many } \frac{p_n}{q_n} \text{ such that not both } p_n,q_n \text{ are odd}\,   \right\},
\end{equation}
which is similar to the irrationality exponent of $\rho$ \footnotemark \footnotetext{The irrationality exponent of an irrational $\rho$ is defined as 
\begin{equation}
\mu(\rho) = \sup \left\{ \mu >0 \, : \, \Big|  \rho - \frac{p}{q}  \Big| < \frac{1}{q^\mu} \text{ for infinitely many rationals } \frac{p}{q} \,  \right\},
\end{equation}
and it can be proved (as in Lemma~\ref{thm:Lemma_Gamma_Tau_Aux}) that equivalently, if $p_n/q_n$ are the convergents of $\rho$, then
\begin{equation}
\mu(\rho) = \sup \left\{ \mu >0 \, : \, \Big|  \rho - \frac{p_n}{q_n}  \Big| < \frac{1}{q^\mu} \text{ for infinitely many } n \in \mathbb N \,  \right\}.
\end{equation}
}. 
In this subsection, we check that \eqref{CorrespondenceJaffard} is the equivalent expression for $\phi$, where $\tau$ is replaced by $\gamma$ \eqref{DefinitionOfGamma}. 

It is clear from \eqref{FromPhiDuistermaatToPhi} that  $\phi_D$ and $\phi$ share regularity properties. More precisely, $\phi$ has at $t_\rho = \rho/2\pi$ the regularity that $\phi_D$ has at $2\rho$, so
\begin{equation}
\alpha_{\phi}(t_{\rho}) = \alpha_{\phi_D}(2\rho).
\end{equation}
Therefore, from \eqref{eq:Connection_Alpha_Tau_Proof} we immediately get
\begin{equation}
\alpha_{\phi}(t_\rho) = \frac12 + \frac{1}{2\tau(2\rho)}.
\end{equation}
However, we want to connect $\alpha_{\phi}(t_\rho)$ directly with some irrationality exponent of $\rho$, not of $2\rho$. It is usual in this transition (see Section~\ref{Section_Heuristics}, \eqref{eq:Classification_Of_Rationals}) that the condition of $p_n,q_n$ not being both odd for $\phi_D$ turns into $q_n \equiv 0,1,3 \pmod{4}$ for $\phi$, so we expect the correct exponent to be
\begin{equation}\label{eq:Definition_Of_Gamma_Proof}
\gamma(x) = \sup\left\{ \gamma \, : \, \Big| x - \frac{p_n}{q_n} \Big| < \frac{1}{q_n^\gamma} \text{ for infinitely many } n \in \mathbb{N} \text{ with } q_n \equiv 0,1,3 \,(\text{mod } 4) \right\},
\end{equation}
which is the same as \eqref{DefinitionOfGamma}. We prove the following: 
\begin{lem}\label{thm:Lemma_Gamma_Tau}
Let $x \in \mathbb R \setminus \mathbb Q$. Then, $\gamma(x) = \tau(2x)$. 
\end{lem}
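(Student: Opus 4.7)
The plan is to prove $\gamma(x) \le \tau(2x)$ and the reverse inequality separately, via a correspondence between rationals approximating $x$ whose denominator lies in $\{0,1,3 \pmod 4\}$ and rationals approximating $2x$ in which not both numerator and denominator are odd. Before carrying out that correspondence, I would invoke a reformulation (in the spirit of the lemma referenced in the footnote) that lets us replace the supremum over convergents by a supremum over \emph{all} lowest-terms rationals satisfying the relevant parity restriction: whenever $\gamma > 2$ and $|x - p/q| < 1/q^{\gamma}$ with $\gcd(p,q)=1$, classical theory forces $p/q$ to be a convergent of $x$. This observation strips the ``convergent'' qualifier from both definitions and turns the problem into a purely Diophantine statement.

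For $\gamma(x) \le \tau(2x)$, I take any $\gamma < \gamma(x)$ and infinitely many lowest-terms $p/q$ with $q \equiv 0,1,3 \pmod 4$ and $|x - p/q| < q^{-\gamma}$. If $q$ is odd, then $2p/q$ is already in lowest terms (numerator even, denominator odd), and $|2x - 2p/q| < 2q^{-\gamma}$. If $q \equiv 0 \pmod 4$, write $q = 2q'$ with $q'$ even; then $2p/q = p/q'$ is in lowest terms, the denominator $q'$ is even, and the inequality becomes $|2x - p/q'| < 2^{1-\gamma}(q')^{-\gamma}$. In both cases we produce infinitely many lowest-terms approximations of $2x$ of the type counted by $\tau$, and the multiplicative constants together with the halving $q \to q/2$ are absorbed by choosing any $\gamma' < \gamma$ arbitrarily close to $\gamma$. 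Passing to the supremum yields $\gamma \le \tau(2x)$.

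The converse reverses these manipulations. Starting from $\tau < \tau(2x)$ and lowest-terms $p/q$ with $|2x - p/q| < q^{-\tau}$ and not both $p,q$ odd, I split into two cases. If $q$ is odd, then $p$ must be even; writing $p = 2p'$ gives $|x - p'/q| < \tfrac{1}{2}q^{-\tau}$ with $\gcd(p',q)=1$ and $q \equiv 1,3 \pmod 4$. If $q$ is even (hence $p$ odd), then $|x - p/(2q)| < \tfrac{1}{2}q^{-\tau}$; the new denominator $2q$ is a multiple of $4$, and $\gcd(p, 2q) = 1$ since $p$ is odd and $\gcd(p,q) = 1$. Either way we manufacture the kind of rational witnessed by $\gamma(x)$, so $\tau(2x) \le \gamma(x)$.

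The main obstacle is the parity bookkeeping: one must verify in each branch that the manipulated fraction is in lowest terms and that its denominator falls in the advertised residue class modulo $4$, and that infinitely many witnesses in one family really translate to infinitely many in the other (rather than, say, being crushed into a finite set by the reduction $2p/q \mapsto p/q'$). The shifts by a factor of $2$ in the denominator and the harmless constants in front of $q^{-\gamma}$ or $q^{-\tau}$ disappear in the supremum, so the identity $\gamma(x) = \tau(2x)$ follows, and combined with Jaffard's formula \eqref{eq:Connection_Alpha_Tau_Proof} this yields \eqref{CorrespondenceJaffard}.
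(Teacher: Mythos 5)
Your proposal is correct and follows essentially the same route as the paper: first reduce both exponents from suprema over convergents to suprema over all lowest-terms rationals via the classical fact that any approximation with exponent $>2$ is a convergent (the paper's Lemma~\ref{thm:Lemma_Gamma_Tau_Aux}), then carry out exactly the parity correspondence $p/q \leftrightarrow 2p/q$ with the constants $1/2$ and $2^{\tau-1}$ absorbed by an $\epsilon$-shift in the supremum (the paper's Lemma~\ref{thm:Lemma_Gamma_Tau_R}, conditions \eqref{eq:P1} and \eqref{eq:P2}). The only bookkeeping the paper makes explicit that you leave implicit is the observation $\gamma(x),\tau(2x)\geq 2$, which disposes of the boundary case where one of the exponents equals $2$ and the $\epsilon$-argument cannot be run.
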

We split the proof in two steps. First, we prove in Lemma~\ref{thm:Lemma_Gamma_Tau_Aux} that $\gamma$ and $\tau$ can be defined using any rational, that is, by
\begin{equation}\label{eq:Definition_Of_Tau_R}
\tau_R(x) = \sup \left\{ \tau \, : \, \Big| x - \frac{p}{q} \Big| < \frac{1}{q^\tau}, \, \text{ for infinitely many rationals } \frac{p}{q} \text{ not both odd}\,   \right\},
\end{equation}
and 
\begin{equation}\label{eq:Definition_Of_Gamma_R}
\gamma_R(x) = \sup\left\{ \gamma : \Big| x - \frac{p}{q} \Big| < \frac{1}{q^\gamma} \text{ for infinitely many rationals } \frac{p}{q}  \text{ with } q \equiv 0,1,3 \,(\text{mod } 4) \right\},
\end{equation}
where in both definitions all fractions must be irreducible. Then, we prove the equality of \eqref{eq:Definition_Of_Tau_R} and \eqref{eq:Definition_Of_Gamma_R} in Lemma~\ref{thm:Lemma_Gamma_Tau_R}. 

\begin{lem}\label{thm:Lemma_Gamma_Tau_Aux}
Let $x \in \mathbb{R} \setminus \mathbb{Q}$. Then, $\tau_R(x) = \tau(x)$ and $\gamma_R(x) = \gamma(x)$. 
\end{lem}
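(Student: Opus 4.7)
\textbf{Proof plan for Lemma~\ref{thm:Lemma_Gamma_Tau_Aux}.}
Both statements have identical structure, so I will carry the two arguments in parallel, letting the parity condition $P$ stand for ``$p,q$ not both odd'' in the case of $\tau$ and for ``$q\equiv 0,1,3\pmod 4$'' in the case of $\gamma$. The plan is to sandwich $\tau(x)$ between $\tau_R(x)$ and itself (and analogously for $\gamma$) using two observations: a trivial inclusion of sets of admissible approximants, and Legendre's theorem that any irreducible $p/q$ with $|x-p/q|<1/(2q^2)$ is automatically a convergent of $x$.

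The inequalities $\tau(x)\le \tau_R(x)$ and $\gamma(x)\le \gamma_R(x)$ are immediate from the definitions: each convergent satisfying $P$ is in particular an irreducible rational satisfying $P$, so the set of exponents $\tau$ (resp.\ $\gamma$) in \eqref{eq:Definition_Of_Tau_Proof} (resp.\ \eqref{eq:Definition_Of_Gamma_Proof}) is contained in the set appearing in \eqref{eq:Definition_Of_Tau_R} (resp.\ \eqref{eq:Definition_Of_Gamma_R}).

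For the reverse inequality, fix any $\sigma < \tau_R(x)$ with $\sigma > 2$. By definition there are infinitely many irreducible fractions $p/q$ with property $P$ satisfying $|x-p/q|<q^{-\sigma}$. Since $\sigma>2$, for all $q$ large enough we have $q^{-\sigma}<1/(2q^2)$, hence by Legendre's theorem all but finitely many of these fractions coincide with a convergent $p_n/q_n$ of $x$. Because being a convergent does not alter the reduced representation, these convergents inherit property $P$, and we obtain infinitely many indices $n$ with $q_n$ satisfying $P$ and $|x-p_n/q_n|<q_n^{-\sigma}$. This gives $\tau(x)\ge \sigma$, and letting $\sigma\uparrow \tau_R(x)$ yields $\tau(x)\ge \tau_R(x)$ as long as $\tau_R(x)>2$. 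The same argument with ``not both odd'' replaced by ``$q\equiv 0,1,3\pmod 4$'' proves $\gamma(x)\ge \gamma_R(x)$ whenever $\gamma_R(x)>2$.

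It remains to dispose of the borderline case where the exponent equals $2$. Since every convergent satisfies $|x-p_n/q_n|<q_n^{-2}$, as soon as infinitely many convergents of $x$ carry the parity condition $P$ one has $\tau(x)\ge 2$ (resp.\ $\gamma(x)\ge 2$). For $\gamma$ this is precisely Lemma~\ref{lem:Continued_Fraction_Auxiliary}; for $\tau$ an analogous elementary verification, based on the recursion $p_n=a_n p_{n-1}+p_{n-2}$, $q_n=a_n q_{n-1}+q_{n-2}$ applied modulo $2$, shows that infinitely many consecutive convergents cannot all have both entries odd. Combined with the first paragraph, this forces equality in this case as well. The one mild obstacle is precisely this last parity check for the convergents; everything else is a clean application of Legendre's theorem.
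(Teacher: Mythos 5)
Your proof is correct and follows essentially the same route as the paper's: the trivial inclusion gives $\tau(x)\le\tau_R(x)$, Legendre's theorem (that $|x-p/q|<1/(2q^2)$ forces $p/q$ to be a convergent) handles exponents strictly above $2$, and the borderline case is settled by noting that infinitely many convergents carry the parity condition, exactly as in the paper (which uses the identity $q_np_{n-1}-q_{n-1}p_n=(-1)^n$ where you propose the recursion mod $2$; both work). No gap to report.
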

\begin{proof}
We prove $\tau_R(x) = \tau(x)$, the proof for $\gamma$ is analogous. First, it is clear that 
\begin{equation}\label{GoodApproximation}
\begin{split}
& \left\{ \tau \, : \, \Big| x - \frac{p_n}{q_n} \Big| < \frac{1}{q_n^\tau} \, \text{ for infinitely many  convergents } \frac{p_n}{q_n} \text{ not both odd}\,   \right\} \\
& \qquad \subset \left\{ \tau \, : \, \Big| x - \frac{p}{q} \Big| < \frac{1}{q^\tau} \, \text{ for infinitely many rationals } \frac{p}{q} \text{ not both odd}\,   \right\},
\end{split}
\end{equation}
so taking the supremum we get $\tau(x) \leq \tau_R(x)$. Let now $\tau$ such that there are infinitely many rationals $p/q$ such that $p$ and $q$ are not both odd and $|x-p/q| < q^{-\tau}$. Assume that $\tau > 2$, so that 
\begin{equation}
\Big| x - \frac{p}{q} \Big| < \frac{1}{q^\tau} \leq \frac{1}{2q^2} \quad \Longleftrightarrow \quad 2 \leq q^{\tau - 2}
\end{equation}
holds whenever $q > 2^{1/(\tau-2)}$. Since we are working with infinitely many rationals $p/q$, in particular infinitely many of them satisfy this last property. It is a property of continued fractions (see \cite[Theorem 19]{Khinchin1964}) that every approximation satisfying the left hand side of \eqref{GoodApproximation} is a convergent of $x$, so there are infinitely many continued fraction convergents $p_n/q_n$ such that $|\rho - p_n/q_n| < q_n^{-\tau}$. Thus, 
\begin{equation}\label{eq:Part_In_Proof_Tau_Aux}
\begin{split}
& \left\{ \tau > 2 \, : \, \Big| x - \frac{p}{q} \Big| < \frac{1}{q^\tau} \, \text{ for infinitely many rationals } \frac{p}{q} \text{ not both odd}\,   \right\} \\
& \qquad \subset \left\{ \tau > 2 \, : \, \Big| x - \frac{p_n}{q_n} \Big| < \frac{1}{q_n^\tau} \, \text{ for infinitely many convergents } \frac{p_n}{q_n} \text{ not both odd}\,   \right\}.
\end{split}
\end{equation}

To continue, we need to check that $ \tau(x) \geq 2$. This is a consequence of $|x - p_n/q_n| < q_n^{-2}$ being true for all $n \in \mathbb N$ and the fact that there are infinitely many convergents $p_n/q_n$ with not both $p_n$ and $q_n$ odd (in fact, consecutive convergents $p_n, q_n, p_{n-1},q_{n-1}$ cannot all be odd because $q_np_{n-1} - q_{n-1}p_n = (-1)^n$, see \cite[Theorem 2]{Khinchin1964}). 

Now, the trivial inequality we proved in the beginning of the proof implies that $2 \leq \tau(x) \leq \tau_R(x)$. Thus, we separate two cases. If $\tau_R(x)=2$, then $2 \leq \tau(x) \leq \tau_R(x) = 2$ and hence $\tau(x) = \tau_R(x)$. Otherwise, $\tau_R(x) > 2$, and  by the definition of the supremum and by \eqref{eq:Part_In_Proof_Tau_Aux},
\begin{equation}
\begin{split}
\tau_R(x)  & = \sup \left\{ \tau > 2 \, : \, \Big| x - \frac{p}{q} \Big| < \frac{1}{q^\tau} \, \text{ for infinitely many rationals } \frac{p}{q} \text{ not both odd}\,   \right\} \\
&  \leq \sup \left\{ \tau > 2 \, : \, \Big| x - \frac{p_n}{q_n} \Big| < \frac{1}{q_n^\tau} \, \text{ for infinitely many convergents } \frac{p_n}{q_n} \text{ not both odd}\,   \right\} \\
&  \leq \sup \left\{ \tau \geq  2 \, : \, \Big| x - \frac{p_n}{q_n} \Big| < \frac{1}{q_n^\tau} \, \text{ for infinitely many convergents } \frac{p_n}{q_n} \text{ not both odd}\,   \right\} \\
&  = \tau(\rho),
\end{split}
\end{equation}
and the proof is complete.
\end{proof}

Thanks to Lemma~\ref{thm:Lemma_Gamma_Tau_Aux}, Lemma~\ref{thm:Lemma_Gamma_Tau} follows from the following.
\begin{lem}\label{thm:Lemma_Gamma_Tau_R}
Let $x \in \mathbb R \setminus \mathbb Q $. Then, $\gamma_R(x) = \tau_R(2x)$. 
\end{lem}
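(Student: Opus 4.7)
The plan is to exhibit an explicit bijection between the irreducible approximations contributing to $\tau_R(2x)$ and those contributing to $\gamma_R(x)$, realized by the rational identity $a/b = 2(p/q)$, and then to track how denominators transform under this correspondence.

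First I would verify the bijection at the level of irreducible fractions. Starting from an irreducible $p/q$ with $q \equiv 0,1,3 \pmod{4}$, put $a/b$ for the reduced form of $2p/q$. If $q$ is odd, then $\operatorname{gcd}(2p,q)=1$ and $(a,b)=(2p,q)$ with $a$ even and $b$ odd, so $a,b$ are not both odd. If $q \equiv 0 \pmod{4}$ then $p$ is forced to be odd, $\operatorname{gcd}(2p,q) = 2$, and $(a,b)=(p,q/2)$ with $a$ odd and $b$ even, again not both odd. Conversely, given irreducible $a/b$ with $a,b$ not both odd, the reduced form of $a/(2b)$ gives $p/q$: if $b$ is odd (so $a$ is even) then $(p,q)=(a/2,b)$ with $q$ odd, and if $b$ is even (so $a$ is odd) then $(p,q)=(a,2b)$ with $q \equiv 0 \pmod{4}$. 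The excluded residue class $q \equiv 2 \pmod{4}$ is precisely the case in which $2p/q$ would reduce to a fraction with both entries odd, so the two conditions are exactly matched. The map is obviously a self-inverse, being just multiplication by $2$ in $\mathbb{Q}$.

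Next I would translate the approximation inequalities. Since $|2x - a/b| = 2\,|x - p/q|$ and the denominators satisfy $b \in \{q, q/2\}$ (equivalently $q \in \{b, 2b\}$), a bound $|x - p/q| < q^{-\gamma}$ becomes
\begin{equation}
|2x - a/b| < 2\, q^{-\gamma} \leq C_{\gamma}\, b^{-\gamma},
\end{equation}
and symmetrically $|2x - a/b| < b^{-\tau}$ becomes $|x - p/q| < C'_{\tau}\, q^{-\tau}$ for constants $C_{\gamma}, C'_{\tau}$ independent of the approximation. Because the bijection is a $\mathbb{Q}$-map and only finitely many irreducible fractions in each class have a given denominator, infinitely many approximations on one side yield infinitely many on the other with denominators tending to infinity; hence for every $\epsilon > 0$ the estimate $C_{\gamma}\, b^{-\gamma} < b^{-(\gamma - \epsilon)}$ holds for all but finitely many of them.

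Finally I would pass to the suprema. If $\gamma$ belongs to the defining set of $\gamma_R(x)$, then for each $\epsilon > 0$ there are infinitely many irreducible $a/b$ with not both $a,b$ odd and $|2x - a/b| < b^{-(\gamma - \epsilon)}$, whence $\tau_R(2x) \geq \gamma - \epsilon$ for every $\epsilon > 0$, giving $\tau_R(2x) \geq \gamma_R(x)$. The reverse inequality is entirely symmetric. The only subtlety --- which I would treat as the main technical point --- is the constant absorbed when translating the bound; this is handled cleanly by relaxing to a strictly smaller exponent and using that the denominators grow along the selected sequence.
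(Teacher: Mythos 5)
Your proof is correct and follows essentially the same route as the paper's: both rest on the bijection $p/q \mapsto 2p/q$ (in lowest terms) between fractions with $q\equiv 0,1,3 \pmod 4$ and fractions with numerator and denominator not both odd, with the resulting constants ($2$ and $2^{\tau-1}$) absorbed by an $\epsilon$-loss in the exponent, justified because the denominators of infinitely many admissible approximations must be unbounded. Your element-wise passage to the suprema even sidesteps the paper's separate treatment of the boundary case $\gamma_R(x)=\tau_R(2x)=2$; the only slip is the literal claim that ``only finitely many irreducible fractions in each class have a given denominator,'' which should be restricted to the approximations under consideration (those within distance $q^{-\gamma}\le 1$ of $x$), for which it is immediate.
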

\begin{proof}
Rewrite $\tau_R(2x)$ as
\begin{equation}
\begin{split}
\tau_R(2x) & =  \sup \left\{ \tau \, : \, \Big| 2x - \frac{p}{q} \Big| < \frac{1}{q^\tau}, \, \text{ for infinitely many } \frac{p}{q} \text{ not both odd}\,   \right\} \\
& =  \sup \left\{ \tau \, : \, \Big| x - \frac{1}{2}\frac{p}{q} \Big| < \frac{1}{2q^\tau}, \, \text{ for infinitely many } \frac{p}{q} \text{ not both odd}\,   \right\}.
\end{split}
\end{equation}
We want to write the bound $1/(2q^\tau)$ in terms of the denominator of the new fraction $p/(2q)$, and there are two different cases:
\begin{enumerate}
	\item If $p$ is even and $q$ is odd, then $p/(2q) = (p/2)/q$, and the denominator is $q$. We let the condition as $|x - (p/2)/q| < 1/(2q^\tau)$.
	\item If $p$ is odd and $q$ is even, then $p/(2q)$, and the denominator is $2q$. 	We rewrite the condition as $|x - p/(2q)| < 2^{\tau - 1}/(2q)^\tau$.
\end{enumerate}
The condition must hold for infinitely many rationals, so if we  relabel as
\begin{equation}\label{eq:P1}
\tag{$P1_\tau$}
\Big| x - \frac{p}{q} \Big| < \frac{1}{2q^\tau}, \qquad \text{ if } q \text{ odd }
\end{equation}
and 
\begin{equation}\label{eq:P2}
\tag{$P2_\tau$}
\Big| x - \frac{p}{q} \Big| < \frac{2^{\tau - 1}}{q^\tau}, \qquad \text{ if } q \equiv 0 \pmod{4},
\end{equation}
then $\tau_R(2x)$ is equivalently given by
\begin{equation}
\tau_R(2x) = \sup \left\{ \tau \, : \,  \text{ infinitely many } \frac{p}{q} \text{ satisfy their corresponding } \eqref{eq:P1} \text{ or } \eqref{eq:P2}  \right\},
\end{equation}
where the rationals have to be such that $q \equiv 0,1,3 \pmod{4}$. 

By Lemma~\ref{thm:Lemma_Gamma_Tau_Aux}, we know that $\tau_R(2x),\gamma_R(x) \geq 2$, so we may work only with $\tau, \gamma \geq 2$ all along the proof. Fix $\epsilon >0$.

With the definition of $\gamma_R(x)$ in mind, assume that $\gamma \geq 2$ is such that $|x - p/q| < 1/q^{\gamma+\epsilon}$ for infinitely many rationals with $q \equiv 0,1,3 \pmod{4}$. For the ones satisfying $q \equiv 0 \pmod 4$,  
\begin{equation}
\frac{1}{q^{\gamma + \epsilon}} < \frac{2}{q^\gamma} \leq \frac{2^{\gamma - 1}}{q^\gamma}
\end{equation}
always holds, so (\textcolor{blue}{$P2_\gamma$}) holds. Also, for those with $q \equiv 1,3 \pmod 4$,  
\begin{equation}
\frac{1}{q^{\gamma + \epsilon}} < \frac{1}{2q^\gamma} \, \Longleftrightarrow 2 < q^\epsilon,
\end{equation}
so (\textcolor{blue}{$P1_\gamma$}) holds for $q>2^{1/\epsilon}$. In short, all rationals that satisfy $q>2^{1/\epsilon}$, which are infinitely many, satisfy their corresponding (\textcolor{blue}{$P1_\gamma$}) or (\textcolor{blue}{$P2_\gamma$}), so 
\begin{equation}
\begin{split}
& \left\{ \gamma \geq 2 \mid \Big| x - \frac{p}{q} \Big| < \frac{1}{q^{\gamma+\epsilon}} \text{ for infinitely many } \frac{p}{q}  \text{ with } q \equiv 0,1,3 \,(\text{mod } 4) \right\} \\
& \qquad \quad \subset \left\{ \tau \geq 2 \, : \,  \text{ infinitely many } \frac{p}{q} \text{ satisfy } \eqref{eq:P1} \text{ or } \eqref{eq:P2}  \right\},
\end{split}
\end{equation}
or equivalently, 
\begin{equation}\label{Epsilons}
\begin{split}
& \left\{ \sigma \geq 2+\epsilon  \mid \Big| x - \frac{p}{q} \Big| < \frac{1}{q^{\sigma}} \text{ for infinitely many } \frac{p}{q}  \text{ with } q \equiv 0,1,3 \,(\text{mod } 4) \right\} - \epsilon \\
& \qquad \quad \subset \left\{ \tau \geq 2 \, : \,  \text{ infinitely many } \frac{p}{q} \text{ satisfy } \eqref{eq:P1} \text{ or } \eqref{eq:P2}  \right\}.
\end{split}
\end{equation}
Then, if we assume that $\gamma_R(x) > 2$ and choose $\epsilon < \gamma_R(x)-2$, then $\gamma_R(x) > 2+\epsilon$ and the supremum of the left hand side set of \eqref{Epsilons} is $\gamma_R(x)-\epsilon$. Then, taking supremums in \eqref{Epsilons}, we get
\begin{equation}\label{eq:Gamma_Less_Than_Tau}
\gamma_R(x) > 2 \quad \Longrightarrow \quad \gamma_R(x) - \epsilon \leq \tau_R(2x), \qquad \forall \epsilon < \gamma_R(x) - 2.
\end{equation}
This is one of the inequalities we need. In particular, $2 < \gamma_R(x) - \epsilon \leq \tau_R(2x)$. Thus,
\begin{equation}\label{eq:Big_Big_1}
\gamma_R(x) > 2 \quad \Longrightarrow \quad \tau_R(2x) > 2. 
\end{equation}

We look now for the reverse inequality. Let $\tau \geq 2$ and assume that there are infinitely many rationals satisfying their corresponding ($\textcolor{blue}{P1_{\tau+\epsilon}}$) or ($\textcolor{blue}{P2_{\tau+\epsilon}}$).
For the rationals satisfying ($\textcolor{blue}{P1_{\tau+\epsilon}}$), 
\begin{equation}
\Big| x - \frac{p}{q} \Big| < \frac{1}{2q^{\tau+\epsilon}} < \frac{1}{q^\tau}
\end{equation}
always holds, and for those satisfying ($\textcolor{blue}{P2_{\tau+\epsilon}}$), we have
\begin{equation}
\Big| x - \frac{p}{q} \Big| < \frac{2^{\tau +\epsilon - 1}}{q^{\tau+\epsilon}} < \frac{1}{q^\tau} \quad \Longleftrightarrow \quad 2^{\tau + \epsilon - 1} < q^\epsilon,
\end{equation}
which holds for all that satisfy $q > 2^{(\tau + \epsilon -1 )/\epsilon}$. We are working with an infinite set of rationals, so infinitely many of them satisfy $q > 2^{(\tau + \epsilon -1 )/\epsilon}$. Thus, infinitely many of them, all with $q \equiv 0,1,3 \pmod 4$, satisfy $|x - p/q| < 1/q^\tau$. Hence, 
\begin{equation}
\begin{split}
& \left\{ \tau \geq 2  \, : \,  \text{ infinitely many } \frac{p}{q} \text{ satisfy (} \textcolor{blue}{P1_{\tau+\epsilon}} \text{) or (} \textcolor{blue}{P2_{\tau+\epsilon}} \text{)}  \right\} \\
& \qquad \quad \subset \left\{ \gamma \geq 2 \, : \, \Big| x - \frac{p}{q} \Big| < \frac{1}{q^{\gamma}} \text{ for infinitely many } \frac{p}{q}  \text{ with } q \equiv 0,1,3 \,(\text{mod } 4) \right\},
\end{split}
\end{equation}
or equivalently, 
\begin{equation}\label{Epsilons2}
\begin{split}
& \left\{ \sigma \geq 2 + \epsilon  \, : \,  \text{ infinitely many } \frac{p}{q} \text{ satisfy (} \textcolor{blue}{P1_{\sigma}} \text{) or (} \textcolor{blue}{P2_{\sigma}} \text{)}  \right\} - \epsilon \\
& \qquad \quad \subset \left\{ \gamma \geq 2 \, : \, \Big| x - \frac{p}{q} \Big| < \frac{1}{q^{\gamma}} \text{ for infinitely many } \frac{p}{q}  \text{ with } q \equiv 0,1,3 \,(\text{mod } 4) \right\},
\end{split}
\end{equation}
As before, if we assume $\tau_R(2x) > 2$, then choose $\epsilon < \tau_R(2x) - 2$ so that $2+\epsilon < \tau_R(2x)$. This implies that the supremum of the set on the left hand side of \eqref{Epsilons2} is precisely $\tau_R(2x) - \epsilon$, so we get
\begin{equation}\label{eq:Tau_Less_Than_Gamma}
\tau_R(2x) > 2 \quad \Longrightarrow \quad \tau_R(2x) - \epsilon \leq \gamma_R(x), \qquad \forall \epsilon <  \tau_R(2x) - 2.
\end{equation}
In particular, $ 2 < \tau_R(2x) - \epsilon \leq \gamma_R(x)$, so we also get
\begin{equation}\label{eq:Big_Big_2}
\tau_R(2x) > 2 \quad \Longrightarrow \quad \gamma_R(x)>2.
\end{equation}

We are ready to conclude. Joining \eqref{eq:Big_Big_1} and \eqref{eq:Big_Big_2} gives
\begin{equation}
\gamma_R(x) = 2 \quad \Longleftrightarrow \quad \tau_R(2x) = 2.
\end{equation}
Also, when $\gamma_R(x), \tau_R(2x) >2$, from \eqref{eq:Gamma_Less_Than_Tau} and \eqref{eq:Tau_Less_Than_Gamma} we get
\begin{equation}
\gamma_R(x) - \epsilon \leq \tau_R(2x)  \leq \gamma_R(x) + \epsilon, \qquad \forall \epsilon < \min \{ \gamma_R(x)-2, \tau_R(2x) - 2  \}.
\end{equation}
Consequently, $\gamma_R(x) = \tau_R(2x)$ and the proof is complete.
\end{proof}

\subsection{A lemma about continued fractions}

\begin{lem}\label{lem:Continued_Fraction_Auxiliary}
Let $\rho \in \mathbb{R} \setminus \mathbb{Q}$ and its convergents by continued fractions $p_n/q_n$. Then, for any $n \in \mathbb{N}$, $q_n$ and $q_{n+1}$ are not both even. Consequently, there exists a subsequence of convergents $p_{n_j}/q_{n_j}$ such that $q_{n_j}$ is odd for all $j \in \mathbb{N}$.  
\end{lem}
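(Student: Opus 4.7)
The plan is to prove both parts via the classical determinant identity for continued fraction convergents, namely $p_n q_{n-1} - p_{n-1} q_n = (-1)^{n-1}$. This identity is standard and follows from the recurrences $p_n = a_n p_{n-1} + p_{n-2}$ and $q_n = a_n q_{n-1} + q_{n-2}$ (see, e.g., Khinchin \cite{Khinchin1964}).

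First, I would deduce from the determinant identity that $\gcd(q_n, q_{n+1}) = 1$ for every $n$. Indeed, any common divisor of $q_n$ and $q_{n+1}$ divides the integer combination $p_{n+1} q_n - p_n q_{n+1} = (-1)^n = \pm 1$, so the only possible common divisor is $1$. In particular, $q_n$ and $q_{n+1}$ cannot share the factor $2$, which gives the first claim: they are never both even.

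For the consequence, I would argue by contradiction. Suppose only finitely many $q_n$ are odd. Then there exists $N \in \mathbb{N}$ such that $q_n$ is even for every $n \geq N$. In particular, $q_N$ and $q_{N+1}$ are both even, contradicting the first part of the lemma. Hence infinitely many $q_n$ must be odd, and extracting the indices on which this happens gives the required subsequence $p_{n_j}/q_{n_j}$ with $q_{n_j}$ odd for all $j \in \mathbb{N}$.

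I do not anticipate a real obstacle here: the entire argument reduces to the well-known coprimality of consecutive continued fraction denominators together with a one-line pigeonhole-style contradiction. The only thing to be a bit careful about is quoting the determinant identity (or equivalently the coprimality $\gcd(q_n, q_{n+1}) = 1$) with a precise reference, since that is the single input that does all the work.
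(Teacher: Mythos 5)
Your proof is correct, but it runs on a different engine than the paper's. You invoke the determinant identity $p_{n+1}q_n - p_nq_{n+1} = (-1)^n$ to conclude that consecutive denominators are coprime, so in particular never both even; the paper instead uses the recurrence $q_{n+1} = a_{n+1}q_n + q_{n-1}$ and argues by downward induction on parity: if $q_N$ and $q_{N+1}$ were both even, then $q_{N-1} = q_{N+1} - a_{N+1}q_N$ would be even, and descending all the way forces $q_0 = 1$ to be even, a contradiction. Your route is slightly stronger (you get $\gcd(q_n,q_{n+1})=1$, not just the parity statement) and arguably cleaner, since the coprimality is a single standard citation; the paper's route is more self-contained in that it only needs the recurrence and the base value $q_0=1$, without bringing the numerators $p_n$ into play. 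Your handling of the second claim (finitely many odd denominators would force two consecutive even ones) matches the paper's implicit argument and is fine. One small caution: the paper's convergents are indexed from $n=0$ with $q_0=1$, so when you quote the determinant identity make sure the index range starts where the recurrence is valid; this is cosmetic and does not affect the argument.
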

\begin{proof}
By contradiction, let $N \in \mathbb{N} $ be such that $q_N$ and $q_{N+1}$ are both even. It is a basic fact of continued fractions \cite[Theorem 1]{Khinchin1964} that if the continued fraction of $\rho$ is $[a_0;a_1,a_2,\ldots]$, then the convergents satisfy $q_{n+1} = a_{n+1}q_n + q_{n-1}$ for every $n \geq 2$. In particular, 
\begin{equation}
q_{N-1} = q_{N+1} - a_{N+1}q_N = 0 \pmod{2},
\end{equation}
so $q_{N-1}$ is even. By induction, $q_n$ is even for every $n \leq N$. However, $p_0 / q_0 = a_0 = [\rho] \in \mathbb N$, so $q_0=1$, which is a contradiction. Hence, there are never two consecutive convergents with even denominator, and convergents with odd denominator are infinitely many.
\end{proof}

\section{The asymptotic behavior: heuristics}\label{sec:Asymptotics_Heuristics}

We now turn to the asymptotic behavior of Riemann's non-differentiable function $\phi$. Recall that we are looking for the precise behavior of $\phi(t_x + h) - \phi(t_x)$ when $h \to 0$, where $t_x = x/2\pi$.
We will always work with rationals $x=p/q$ such that $p$ and $q$ are coprime, and in that case we will often denote $t_{p/q}$ as $t_{p,q}$. 
In this section we explain the heuristics of this computation. The arguments here will be rigorously established in Sections~\ref{Section_BaseCases} and \ref{Section_Rationals}.

\subsection{Overview}\label{sec:Overview}

We mentioned in the introduction that Duistermaat \cite{Duistermaat1991} computed the asymptotic behavior of $\phi_D$ near rational points. For that, he first realized that the derivative of $\phi_D$ is directly related to the Jacobi $\theta$ function
\begin{equation}\label{JacobiTheta}
\theta(z) = \sum_{k \in \mathbb{Z}}{ e^{\pi i k^2 z} },  \qquad z \in \mathbb{H}  = \{ z \in \mathbb{C} \mid \operatorname{Im}(z) >0 \},
\end{equation}
because
\begin{equation}\label{eq:Phi_Duistermaat_Derivative}
\phi_D'(z) = \frac12 \left( \theta(z) - 1 \right), \qquad \forall z \in \mathbb{H}.
\end{equation}
The $\theta$ function interacts with the modular group $\Gamma$ of M\"obius transformations $\gamma$ that satisfy
\begin{equation}
\gamma(z) = \frac{az+b}{cz+d}, \qquad a,b,c,d \in \mathbb{Z}, \qquad ad-bc=1,
\end{equation}
which is a group under the operation of composition that is generated by the transformations
\begin{equation}
S(x) = 1/z \qquad \text{ and } \qquad T(z) = z + 1; \qquad \qquad \Gamma = \langle S, T \rangle.
\end{equation} 
It is well-known that the Jacobi $\theta$ function interacts very well with $S$, since the inversion identity
\begin{equation}\label{InversionOfTheta}
\theta \left( \frac{-1}{z} \right) = \sqrt{\frac{z}{i}}\, \theta(z), \qquad \forall z \in \mathbb H,
\end{equation}
holds with the principal branch of the square root. But $\theta$ interacts not with $T$ but with $T^2(z) = z+2$, since trivially
\begin{equation}\label{PeriodicityOfTheta}
\theta(z+2) = \theta(z), \qquad \forall z \in \mathbb H.
\end{equation}
Thus, the group linked to $\theta$ is the subgroup $\Gamma_\theta = \langle S,T^2 \rangle$, the so-called $\theta$-modular group. It can be equivalently written as
\begin{equation}\label{eq:Theta_Modular_Group}
\Gamma_\theta = \left\{ \,\,  \gamma(x) = \frac{ax+b}{cx+d} \quad \mid \quad a,b,c,d \in \mathbb{Z}, \, \, ad-bc = 1, \, \, a \equiv d \not\equiv b \equiv c \, (\text{mod } 2) \,\, \right\}.
\end{equation}
Properties~\eqref{InversionOfTheta} and \eqref{PeriodicityOfTheta} and the fact that $\Gamma_\theta$ is a group imply that for every $\gamma \in \Gamma_\theta$  there exists an identity relating $\theta(\gamma(z))$ with $\theta(z)$. In fact, it is
\begin{equation}\label{eq:Theta_Transformation_General}
\theta(\gamma(z)) = e_{\gamma}\,\sqrt{cz+d}\,\theta(z), \qquad \forall \gamma \in \Gamma_{\theta},
\end{equation}
where $e_\gamma$ is an eighth root of the unity depending only on $c$ and $d$. Details on the properties of the Jacobi $\theta$ function and of the modular group can be found in \cite{Apostol1990,SteinShakarchi2003}. 

Duistermaat used the transformation \eqref{eq:Theta_Transformation_General} in \eqref{eq:Phi_Duistermaat_Derivative} and integrated the identity to obtain an asymptotic expansion for $\phi_D(x) - \phi_D(r)$, where $r$ is the rational pole of the $\gamma \in \Gamma_\theta$ chosen. Here, as stated in the introduction, instead of using \eqref{FromPhiDuistermaatToPhi} to translate the asymptotic behavior for $\phi_D$ to $\phi$, we will compute the asymptotic behavior of $\phi$ directly.

In our case, the identity \eqref{eq:Phi_Duistermaat_Derivative} takes the form 
\begin{equation}\label{PhiWithTheta}
 \phi(t) = i \,\int_0^t{ \theta(-4\pi \tau)\,d\tau }, 
\end{equation}
at least formally because $\theta$ is not well-defined on $\mathbb{R}$. Then, the asymptotic at $t_x$ is
\begin{equation}\label{AsymptoticWithTheta}
\phi(t_x + h ) - \phi(t_x) = i \,\int_{t_x}^{t_x+h}{ \theta(-4\pi \tau)\,d\tau } =  i \,\int_{t_x}^{t_x+h}{ \psi(0, \tau)\,d\tau } ,
\end{equation} 
where $\psi$ is the Schr\"odinger solution \eqref{FreeSchrodingerSolution}. This expression, together with the $\theta$-modular transformations, will allow us to reduce the asymptotics around any rational to the behavior around either 0 or $t_{1,2}$. These two, on the other hand, can be computed by hand. 
This reduction is related to 
 the Talbot effect and the generalized Gauss sums
\begin{equation}\label{GaussSum}
G(a,b,c) = \sum_{m = 0}^{c-1}{ e^{2\pi i \,\frac{a\,m^2 + b\,m}{c}} },  \qquad a,b \in \mathbb{Z}, \quad  c \in \mathbb{N}. 
\end{equation}  
Indeed, we are going to see in Subsection~\ref{Section_Heuristics} that the Talbot effect, which happens at the level of $\psi$, combined to the pseudoconformal invariance of the Schr\"odinger solution \eqref{FreeSchrodingerSolution} yields an iterative algorithm to reduce any Gauss sum $G(p,0,q)$ to the trivial $G(0,0,1)$ or $G(1,0,2)$. Thus, \eqref{AsymptoticWithTheta} suggests that this iterative algorithm can be translated to the level of $\phi$ to reduce the behavior around $t_{p,q}$ to either $t_{0,1}=0$ or $t_{1,2}$. 
In fact, these iterations will materialize in a single $\theta$-modular transformation, so the reduction will be the consequence of combining \eqref{eq:Theta_Transformation_General} and  \eqref{AsymptoticWithTheta}. However, the algorithm does not supply the transformation explicitly, so we will compute it in Subsection~\ref{Section_LookingForTransformations} following ideas of \cite{Jaffard1996}.

\subsection{Heuristics of the reduction: the Talbot effect and Gauss sums}\label{Section_Heuristics}

The Talbot effect is an optic phenomenon consisting in the interference caused by the diffracted light after crossing a grating with equidistant parallel slits. In 1836, Talbot \cite{Talbot1836} discovered a distance, called the Talbot distance nowadays, where the interference pattern matches the original grating. Later, it was discovered that in every fraction $p/q$ of the Talbot distance, the interference pattern is a grating with $q$ times as many slits as the original (see \cite{BerryMarzoliSchleich2001}). 

It turns out that the Talbot effect is mathematically expressed in terms of the solution $\psi$ \eqref{FreeSchrodingerSolution} to the Schr\"odinger equation \cite{BerryKlein1996,MatsutaniOnishi2003}. More precisely, 
\begin{equation}\label{TalbotEffect}
\psi(s,t_{p,q} ) =  \sum_{k \in \mathbb{Z}}{ e^{2\pi i \left( k s - k^2\frac{p}{q}  \right)} } = \frac{1}{q} \, \sum_{k \in \mathbb{Z}}{ \sum_{r=0}^{q-1}{ G(-p,r,q)\, \delta\left(s-k-\frac{r}{q}\right) } }, 
\end{equation} 
where $G(-p,r,q)$ are Gauss sums \eqref{GaussSum}, see \cite[Section 3.3]{delaHozVega2014} for the details.


The Talbot effect \eqref{TalbotEffect} and the prseudoconformal symmetry of the Schr\"odinger equation can be used to compute Gauss sums iteratively. The basic idea is that a symmetry together with an invariant initial datum yields an invariance for the corresponding solution, in case uniqueness of solutions is granted. 
For example, the free Schr\"odinger equation is translation invariant: if $u(s,t)$ is a solution, then so is $u(s+1,t)$. This symmetry takes the initial condition $u(s,0)$ to $u(s+1,0)$. In \eqref{FreeSchrodingerEquation}, $\psi_0(s) = \psi_0(s+1)$, so assuming uniqueness, the two solutions must also coincide, so $\psi(s,t) = \psi(s+1,t)$.

We repeat this procedure with the pseudoconformal symmetry
\begin{equation}\label{PseudoconformalTransformation}
\mathcal{P}u(s,t) = \frac{1}{\sqrt{4\pi i t}} \, \overline{u}\left( \frac{s}{t},\frac{1}{t} \right) \,e^{is^2/(4t)}, \qquad \mathcal{P}u(s,0) = \mathcal{F}^{-1}\left( \overline{u}(4\pi \cdot,0) \right)(s) = \frac{1}{4\pi}\mathcal{F}^{-1}\overline{u}\left( \frac{s}{4\pi},0 \right),
\end{equation}
where the bar represents complex conjugation.
Due to the Poisson summation formula, the initial datum $\psi_0(s) =\psi(s,0)$ satisfies $\widehat{\psi_0} = \psi_0 = \overline{\psi_0}$, so 
\begin{equation}
\mathcal{P}\psi_0(s) = \frac{1}{4\pi}\psi_0\left( \frac{s}{4\pi}\right) . 
\end{equation}  
Then, if uniqueness of solution is assumed, we get 
\begin{equation}
 \mathcal{P}\psi(s,t) =   \frac{1}{4\pi}\psi\left( \frac{s}{4\pi},\frac{t}{(4\pi)^2} \right).  
\end{equation}
Rearranging the above leads to the pseudoconformal invariance of $\psi$,
\begin{equation}\label{Pseudoconformal_Invariance}
\psi(s,t) = \frac{1}{(4\pi i t)^{1/2}}\, e^{i s^2/(4t)}\, \overline{\psi}\left( \frac{s}{4\pi t}, \frac{1}{(4\pi)^2t} \right).
\end{equation}
The key point is that \eqref{Pseudoconformal_Invariance} allows the reduction
\begin{equation}\label{Pseudoconformal_Reduction}
t_{p,q} \to \frac{1}{(4\pi)^2}\,\frac{1}{t_{p,q}} = \frac{1}{2\pi}\,\frac{q}{4p} = t_{q,4p}.
\end{equation} 
To see the effect of this at the level of Gauss sums, evaluate \eqref{Pseudoconformal_Invariance} in $t_{p,q}$ and use the Talbot effect \eqref{TalbotEffect} to get
\begin{equation}\label{eq:TalbotAndPseudoconformal}
 \frac{1}{q}\,\sum_{k \in \mathbb{Z}} \sum_{r=0}^{q-1} G(-p,r,q)\,\delta\left(s - k -\frac{r}{q}\right)   =    \frac{e^{\frac{i\pi}{2}\frac{q}{p}s^2}}{2\,\sqrt{2ipq}} \,\sum_{k\in\mathbb{Z}}\sum_{r=0}^{4p-1} \overline{G(-q,r,4p)}\,\delta\left( s-\frac{2p}{q}k - \frac{r}{2q} \right).
\end{equation}
Compare the coefficients of the respective Dirac deltas at $s=0$ to get the well-known reciprocity formula for Gauss sums,
\begin{equation}\label{eq:Reciprocity_Formula}
G(p,0,q) = \sqrt{\frac{q}{p}}\, \frac{1+i}{4}\,G(-q,0,4p),
\end{equation} 
which can be found, for instance, in \cite[Theorem 1.2.2]{BerndtEvansWilliams1998}.  

Gauss sums are easy to compute by hand when $q$ is small. For instance, \eqref{eq:Reciprocity_Formula} immediately implies the non-trivial $G(1,0,q) =  \sqrt{q} \,(1+i)(1+(-i)^q)/2$ for every $q \in \mathbb{N}$. In the same way,  we may combine it with the trivial modular property  
\begin{equation}\label{eq:Modularity_Formula}
 G(a,0,c) = G(a(\text{mod } c),0,c),
\end{equation}
to compute $G(p,0,q)$ iteratively. We do that in Algorithm~\ref{thm:Algorithm_For_Reduction}. We do not take care of the multiplying factors coming from each time we use the reciprocity formula \eqref{eq:Reciprocity_Formula}, but just control the reduction of the variables $(p,q)$ of the Gauss sums. 

\begin{alg}\label{thm:Algorithm_For_Reduction}
Let $p,q \in \mathbb{N}$ coprime integers such that $q \neq 1,2,4$ and $p<q$. Denote by $R$ the reciprocity formula \eqref{eq:Reciprocity_Formula} and by $M$ the modularity formula \eqref{eq:Modularity_Formula}. 
\begin{itemize}
	\item If $p < q/2$,  do $(p,q) \xrightarrow{R} (-q,4p) \xrightarrow{M} (4p-q,4p)$. 
	\begin{itemize}
		\item If $p < q/4$, then $4p < q$. The denominator has been reduced.
		\item If $q/4 < p < q/2$, iterate again $(4p-q,4p) \xrightarrow{R} (-p,4p-q) \xrightarrow{M} (3p-q,4p-q)$. And $0 <4p-q< q$. The denominator has been reduced.
	\end{itemize}
	\item If $q/2 < p < q$, do $(p,q) \xrightarrow{M} (p-q,q) \xrightarrow{R} (q,4(q-p))$.
	\begin{itemize}
		\item If $p > 3q/4$, then $4(q-p) < q$. The denominator has been reduced.
		\item If $q/2  < p < 3q/4$, iterate again $(q,4(q-p)) \xrightarrow{M} (4p - 3q,4(q-p)) \xrightarrow{R} (q-p,3q - 4p)$, where $3q-4p < q$. The denominator has been reduced.
	\end{itemize}	 
\end{itemize} 
If $q=4$, then $(p,4) \xrightarrow{R} (-4,4p) = (-1,p) \xrightarrow{M} (p-1,p)$, where $p=1$ or $p=3$. Therefore, the denominator $q$ can always be reduced to $q=1$ or $q=2$. When $q=2$, then $(1,2) \xrightarrow{R} (-2,4) = (-1,2) \xrightarrow{M} (1,2)$, so the algorithm takes $q=2$ to itself. 
\end{alg}

\begin{rem}
In the same way that the reciprocity formula \eqref{eq:Reciprocity_Formula} is a consequence of the pseudoconformal invariance \eqref{Pseudoconformal_Invariance}, the modular property \eqref{eq:Modularity_Formula} can be seen a consequence of the time periodicity of $\psi$,
\begin{equation}\label{Periodic_Invariance}
\psi(s,t) = \psi(s,t+1/2\pi),
\end{equation}
and corresponds to the time transformation
\begin{equation}\label{eq:Modular_Reduction}
t_{p,q} \to t_{p,q} + \frac{k}{2\pi} = \frac{1}{2\pi}\,\left( \frac{p}{q} + k \right) = t_{p+kq,q}, \qquad \forall k \in \mathbb{Z}.
\end{equation} 
\end{rem}

In short, Algorithm~\ref{thm:Algorithm_For_Reduction} shows that for every irreducible rational number $p/q$ there exists a transformation $\gamma$, formed by several combinations of \eqref{Pseudoconformal_Reduction} and \eqref{eq:Modular_Reduction}, and which has attached two other transformations $a_\gamma$ and $b_\gamma$ coming from the corresponding \eqref{Pseudoconformal_Invariance} and \eqref{Periodic_Invariance}, such that
\begin{equation}\label{eq:Identity_After_Transformation}
\psi(s,t) = a_\gamma(s,t)\,\psi\left( b_\gamma(s,t),\gamma(t) \right)
\end{equation}
and either $\gamma(t_{p,q}) = t_{0,1} = 0$ or $\gamma(t_{p,q})=t_{1,2}$. This identity can now be plugged in \eqref{AsymptoticWithTheta}, so a change of variables $\gamma(t)=\tau$ should lead to the asymptotic behavior around $0$ or $t_{1,2}$. 

At this stage, we do not know an explicit expression for $\gamma$,
but we can guess the nature of $\gamma$ anyways. For that, rewrite \eqref{AsymptoticWithTheta} by changing variables $r = 2\pi \tau$ as
\begin{equation}\label{eq:Asymptotic_Ready_For_Reduction}
\phi(t_x+ h) - \phi(t_x) = i\, \int_{x}^{x+2\pi h}{  \psi(0,r/2\pi)\,dr} = \frac{i}{2\pi}\,\int_x^{x+2\pi h}{ \theta(-2r)\,dr }.
\end{equation} 
This way, it is adapted to the setting of Algorithm~\ref{thm:Algorithm_For_Reduction} with $r \in (x,x+2\pi h)$ in the same scale as $p/q$. That means that the time transformations coming from \eqref{Pseudoconformal_Invariance} and \eqref{Periodic_Invariance} are applied to $\eta(r) = \theta(-2r)$. According to \eqref{Pseudoconformal_Reduction}, reciprocity changes $\eta(r) \to \eta(-1/4r)$, that is, $\theta(r) \to \theta(-1/r)$. On the other hand, in view of \eqref{eq:Modular_Reduction} with $k=1$, modularity changes $\eta(r) \to \eta(r+1)$, that is, $\theta(r) \to \theta(r+2)$. 
These two transformations,
\begin{equation}\label{eq:Transformations_Rescaled}
r \to 1/r \qquad \text{ and } \qquad  r \to r + 2,
\end{equation}
are precisely the generators of the $\theta$-modular group $\Gamma_\theta$ \eqref{eq:Theta_Modular_Group}. Since $\gamma$ is a combination of both, then it must be a $\theta$-modular transformation $\gamma \in \Gamma_\theta$. Observe that we have changed the scale in \eqref{eq:Asymptotic_Ready_For_Reduction} again, with a change of variables $2r=\sigma$. The proper setting is now 
\begin{equation}\label{eq:Asymptotic_Ready_For_Reduction_With_Modular_Group}
\phi(t_x+ h) - \phi(t_x) = \frac{i}{4\pi}\,\int_{2x}^{2x+4\pi h}{ \theta(- \sigma)\,d\sigma },
\end{equation}
and for $x = p/q$, since the reduction will yield asymptotics at $0$ or $t_{1,2}$, then either $\gamma(2p/q)=0$ or $\gamma(2p/q)=1$ will hold.
From now on, we will denote by $\tilde{p}/\tilde{q}$ the irreducible fraction of $2p/q$, so that
\begin{equation}\label{eq:Definition_Of_TildePTildeQ}
\begin{array}{lll}
 \tilde{p} = 2p, & \quad \tilde{q} = q, & \qquad \text{ if } q \text{ is odd, } \\
 \tilde{p} = p, & \quad \tilde{q} = q/2, & \qquad \text{ if } q \text{ is even. }
\end{array}
\end{equation}

At this point, we can guess which rational numbers can be sent to 0 and which cannot. Assume both $\tilde{p},\tilde{q}$ are odd and that $\gamma \in \Gamma_{\theta}$ is such that $\gamma(\tilde{p}/\tilde{q}) = 0$. The coefficients in the numerator of $\gamma$, $a$ and $b$  (see \eqref{eq:Theta_Modular_Group}), are coprime, so either $a = \tilde{q}$ and $b = -\tilde{p}$ or $a = -\tilde{q}$ and $b = \tilde{p}$ must hold. But then the parity condition in \eqref{eq:Theta_Modular_Group} is not kept, hence $\gamma$ does not exist. These points are precisely corresponding to $p/q$ with $q \equiv 2\, (\text{mod } 4)$, because then $p$ is odd and $\tilde{p}/\tilde{q} = p/(q/2)$, where $q/2$ is odd. On the other hand, if $q \equiv 0 \, (\text{mod } 4)$, then $\tilde{p}/\tilde{q} = p/(q/2)$ with $p$ odd and $q/2$ even, and if $q \equiv 1,3 \, (\text{mod } 4)$, then $\tilde{p}/\tilde{q} = 2p/q$ with $2p$ even and $q$ odd. 

In Subsection~\ref{Section_LookingForTransformations}, we prove that the general scheme for the $\theta$-modular transformations corresponding to $t_{p,q}$ is 
\begin{equation}\label{eq:Classification_Of_Rationals}
\begin{array}{lll}
q \text{ odd } & \Longrightarrow & \tilde{p} = 2p, \quad \tilde{q} = q, \quad \,\,\,  \exists \gamma \in \Gamma_{\theta} \text{ such that } \gamma(\tilde{p}/\tilde{q}) = 0.  \\
q \equiv 0 \, (\text{mod } 4) & \Longrightarrow & \tilde{p} = p, \quad \tilde{q} = q/2, \quad \exists \gamma \in \Gamma_{\theta} \text{ such that } \gamma(\tilde{p}/\tilde{q}) = 0. \\
q \equiv 2 \, (\text{mod } 4) & \Longrightarrow & \tilde{p} = p, \quad \tilde{q} = q/2, \quad \exists \gamma \in \Gamma_{\theta} \text{ such that } \gamma(\tilde{p}/\tilde{q}) = 1.
\end{array}
\end{equation} 
We will also compute these transformations.

\subsection{Formal reduction and $\theta$-modular functions}\label{Section_LookingForTransformations}

We now compute the $\theta$-modular transformations of classification \eqref{eq:Classification_Of_Rationals} explicitly, which were essentially given in \cite{Jaffard1996}. Then, combining them with \eqref{eq:Asymptotic_Ready_For_Reduction_With_Modular_Group}, we will  reduce the asymptotics around $t_{p,q}$ to either 0 or $t_{1,2}$ formally. The conclusions, though heuristic, are very enlightening.

We determine the coefficients $a,b,c,d$ of $\gamma \in \Gamma_{\theta}$ as in \eqref{eq:Theta_Modular_Group} using continued fractions. Let $\tilde{p}_n/\tilde{q}_n$ be the $n$-th convergent of $\tilde{p}/\tilde{q}$ by continued fractions. As a rational number, it has finitely many convergents, so there exists $N \in \mathbb{N}$ such that $\tilde{p}/\tilde{q} = \tilde{p}_N/\tilde{q}_N$.  Also, recall that $\tilde{p}_n\,\tilde{q}_{n-1} - \tilde{q}_n\,\tilde{p}_{n-1} = (-1)^{n-1}$ for every $n \leq N$. Details about continued fractions can be found in \cite{Khinchin1964}.


\subsubsection{Transformation for rationals $p/q$ such that $q \equiv 0,1,3 \pmod{4}$}
\label{Subsection_TransformationNotBothOdd}

According to \eqref{eq:Classification_Of_Rationals}, these rationals can be sent to 0. Indeed, $\tilde{p}$ and $\tilde{q}$ are not both odd, so choose
\begin{equation}
a = \tilde{q}, \qquad b = -\tilde{p}.
\end{equation} 
Since $\tilde{p} = \tilde{p}_N$ and $\tilde{q}=\tilde{q}_N$, the other coefficients will depend on $\tilde{p}_{N-1}$ and $\tilde{q}_{N-1}$:
\begin{itemize}
	\item If $\tilde{p}_{N-1}$ and $\tilde{q}_{N-1}$ are not both odd, we choose 
	\[ c = (-1)^{N-1}\,\tilde{q}_{N-1}, \qquad d = (-1)^N\,\tilde{p}_{N-1}, \]
	so that $ad-bc = (-1)^N \left( \tilde{q}\,\tilde{p}_{N-1} - \tilde{p}\,\tilde{q}_{N-1} \right) = (-1)^{2N} = 1$.
	\item If $\tilde{p}_{N-1}$ and $\tilde{q}_{N-1}$ are both odd, the above does not satisfy the parity conditions, so choose
	\[ c = (-1)^{N-1}\,\tilde{q}_{N-1} + \tilde{q}, \qquad d = (-1)^N\,\tilde{p}_{N-1}-\tilde{p}. \]
\end{itemize}

\begin{rem}\label{RemarkForC013}
The choice of $c$ and $d$ is not unique. Indeed, parity and the determinant are preserved with $c' = c + 2k\tilde{q}$ and $d' = d - 2k\tilde{p}$ for any $k \in \mathbb{Z}$. If $k=1$, we may work with $\tilde{q} < c < 4\tilde{q}$ in both cases. If $k=-1$ in the first case and $k=-2$ in the second one, we may also work with $-4\tilde{q} < c < -\tilde{q}$. 
\end{rem}

\subsubsection{Transformation for rationals $p/q$ such that $q \equiv 2 \pmod{4}$}
\label{Subsection_TransformationBothOdd}
According to \eqref{eq:Classification_Of_Rationals}, they cannot be sent to 0. In this case, both $\tilde{p}$ and $\tilde{q}$ are odd, so choose
\[ a = (-1)^{N-1}\,\tilde{q}_{N-1} + \tilde{q}, \quad b = (-1)^N\tilde{p}_{N-1}-\tilde{p}, \quad c = (-1)^{N-1}\tilde{q}_{N-1}, \quad d= (-1)^N\,\tilde{p}_{N-1}.  \]
Indeed, $\tilde{p}_{N-1}$ and $\tilde{q}_{N-1}$ cannot both be odd, so parity conditions are preserved. Also $ad-bc = 1$. One can easily check that $\gamma(\tilde{p}/\tilde{q}) =1$. 

\begin{rem}\label{RemarkForC2}
Here too, the choice of $a,b,c,d$ is not unique, since all properties are preserved if 
\[ \begin{array}{ll}
a = (-1)^{N-1}\,\tilde{q}_{N-1} + (2k+1)\tilde{q}, & b = (-1)^N\tilde{p}_{N-1}-(2k+1)\tilde{p}, \\
c = (-1)^{N-1}\tilde{q}_{N-1} + 2k\tilde{q}, & d= (-1)^N\,p_{N-1}-2k\tilde{p},
\end{array}  \] 
for any $k \in \mathbb{Z}$. With $k=1$, we may assume $\tilde{q} < c < 3\tilde{q}$, and with $k=-1$, we may work with $-3\tilde{q} < c < -\tilde{q}$.
\end{rem}

\subsubsection{Formal reduction}\label{Subsection_Reduction}

Once we have the transformations, let us use them in \eqref{eq:Asymptotic_Ready_For_Reduction_With_Modular_Group} to reduce from $t_{p,q}$ to either 0 or $t_{1,2}$ formally. 


We begin with $0 < p \leq q$ coprime such that $q \equiv 0,1, 3 \pmod{4}$. We just saw that there exists $\gamma \in \Gamma_{\theta}$ such that $\gamma(\tilde{p}/\tilde{q}) = 0$.  According to \eqref{eq:Asymptotic_Ready_For_Reduction_With_Modular_Group}, for $h \in \mathbb{R}$ we have
\begin{equation}\label{eq:Asymptotic_Ready_For_Reduction_With_Modular_Group_Rationals}
\phi(t_{p,q}+h) - \phi(t_{p,q}) 
= \frac{i}{4\pi} \,\int_{\tilde{p}/\tilde{q}}^{\tilde{p}/\tilde{q} + 4\pi h}{ \theta(-\sigma)\,d\sigma }. 
\end{equation} 
Conjugate and use the transformation \eqref{eq:Theta_Transformation_General} with the $\gamma$ above so that
\begin{equation}\label{eq:Before_Changing_Variables}
\overline{ \phi(t_{p,q}+h) - \phi(t_{p,q}) } = \frac{\overline{i\,e_{\gamma}}}{4\pi}\, \int_{\tilde{p}/\tilde{q}}^{\tilde{p}/\tilde{q} + 4\pi h}{ \frac{\theta(\gamma(\sigma))}{ \sqrt{c\sigma+d} } \,d\sigma }. 
\end{equation}  
Now, change variables $\gamma(\sigma) = r$. Since  $a = \tilde{q},\, b=-\tilde{p}$ and $ad-bc = 1$, we have
\begin{equation}\label{eq:Changing_Variables}
  \gamma(x) = \frac{ax+b}{cx+d} \qquad \Longrightarrow \qquad \gamma^{-1}(x) = \frac{dx-b}{-cx+a}, \quad \gamma'(x) = \frac{1}{(cx+d)^2}. 
\end{equation}
Then, the boundaries of the integral become $\gamma(\tilde{p}/\tilde{q})=0$ and 
\begin{equation}\label{eq:Upper_Boundary_Of_Integral}
\gamma(\tilde{p}/\tilde{q} + 4\pi h) = \frac{4\pi \tilde{q}^2h}{1+4\pi c \tilde{q} h}.
\end{equation}
At this point, the cases $h>0$ and $h<0$ have to be considered separately. To avoid a null denominator, if $h\geq0$, following Subsections~\ref{Subsection_TransformationNotBothOdd} and \ref{Subsection_TransformationBothOdd} we let $c = c_+$ be such that $\tilde{q} < c_+ < 4\tilde{q}$. On the other hand, if $h<0$, choose $c = c_-$ such that $-4\tilde{q} < c_- < -\tilde{q}$. This way, we have  $4\pi c \tilde{q} h \geq 0$ in both cases. With \eqref{eq:Upper_Boundary_Of_Integral} in mind, define
\begin{equation}\label{eq:Definition_Of_B}
 b(h) = \frac{\tilde{q}^2h}{1+4\pi c_\pm \tilde{q} h} = \left\{
\begin{array}{ll}
\frac{\tilde{q}^2h}{1+4\pi c_{+} \tilde{q} h}, & \text{when } h \geq 0, \\
\frac{\tilde{q}^2h}{1+4\pi c_{-} \tilde{q} h}, & \text{when } h < 0.
\end{array}
\right.
\end{equation}  
Then, \eqref{eq:Before_Changing_Variables} turns into
\begin{equation}\label{eq:After_Changing_Variables_013}
\overline{ \phi(t_{p,q} + h ) - \phi(t_{p,q}) } = \frac{\overline{i\,e_{\gamma}}}{4\pi}\, \int_0^{4\pi b(h)}{  \frac{\theta(r)}{(\tilde{q}-c_{\pm}r)^{3/2}}\, dr    }, \qquad \text{ for all } h. 
\end{equation} 
When $|h|$ is small, $b(h)$ behaves like $\tilde{q}^2h$, so the variable $r$ of the integral is small and $\tilde{q} - cr$ is similar to $\tilde{q}$. Thus, by \eqref{eq:Asymptotic_Ready_For_Reduction_With_Modular_Group_Rationals}, the asymptotic around $t_{p,q}$ will behave approximately as
\begin{equation}\label{eq:Approximated_Asymptotic_013}
\phi(t_{p,q} + h ) - \phi(t_{p,q}) \approx  \frac{e_{\gamma}}{\tilde{q}^{3/2}}\, \frac{i}{4\pi} \,  \int_0^{ 4\pi \tilde{q}^2 h }{  \theta(-r)\, dr    }  =  \frac{e_{\gamma}}{\tilde{q}^{3/2}}\,\phi( \tilde{q}^2\,h). 
\end{equation} 
This means that when $h \to 0$, the behavior of $\phi$ around $t_{p,q}$ is essentially the same as around 0, except that we need to rescale by $\tilde{q}^2$ in the variable and by $\tilde{q}^{-3/2}$ in the image.

On the other hand, if   $q \equiv 2 \pmod{4}$, there exists $\gamma \in \Gamma_{\theta}$ such that $\gamma(\tilde{p}/\tilde{q}) = 1$. The same steps as before lead to 
\begin{equation}\label{eq:After_Changing_Variables_2}
\overline{ \phi(t_{p,q} +  h) - \phi(t_{p,q}) } = \frac{\overline{i\,e_{\gamma}}}{4\pi}\, \int_1^{1 + 4\pi b(h)}{  \frac{\theta(r)}{(\tilde{q}-c_{\pm}(r-1))^{3/2}}\, dr  } .
\end{equation} 
Like before, when $|h|$ is small we have $b(h) \approx \tilde{q}^2h$, so
\begin{equation}\label{eq:Approximated_Asymptotic_2}
\phi(t_{p,q}+h) - \phi(t_{p,q})  \approx  \frac{\,e_{\gamma}}{\tilde{q}^{3/2}} \, \frac{i}{4\pi}\, \int_1^{1 + 4\pi \tilde{q}^2 h}{ \theta(-r)\, dr  }  =  \frac{e_{\gamma}}{\tilde{q}^{3/2}}\,\left(  \phi(t_{1,2} + \tilde{q}^2\,h) - \phi(t_{1,2}) \right).
\end{equation}
Thus, up to the same scaling as before, the behavior of $\phi$ around $t_{p,q}$ is essentially the same as around $t_{1,2}$ when $h \to 0$.

We will make make this formal reduction rigorous in Section~\ref{Section_Rationals}. However, that will be of no use if we do not know how $\phi$ behaves around 0 and $t_{1,2}$. We devote Section~\ref{Section_BaseCases} to compute the asymptotic behavior of $\phi$ around those two points by hand.

\section{Asymptotic behavior around 0 and $t_{1,2}$}\label{Section_BaseCases}

\subsection{Asymptotic behavior around $0$}\label{Section_AsymptoticIn0}

Since $\phi(0)=0$, we need to compute an asymptotic expression for $\phi(h)$. The main idea, which can be traced back to Smith \cite{Smith1972}, is to use the Poisson summation formula.
We begin assuming  $h > 0$ and writing
\begin{equation}\label{eq:Definition_Of_G}
\phi(h) = -h\,\sum_{k \in \mathbb{Z}}{ g(2\pi k \sqrt{h}) }, \qquad \text{ where } \qquad  g(x) = \frac{e^{-ix^2}-1}{x^2}.
\end{equation}  
The Poisson summation formula (see \cite[Theorem 3.1.17]{Grafakos2009}) gives
\begin{equation}\label{PoissonSummationFormulaUsed}
\phi(h) = -\frac{\sqrt{h}}{2\pi}\,\sum_{k \in \mathbb{Z}}{\widehat{g} \left( \frac{k}{2\pi \sqrt{h}} \right)}
\end{equation} 
if $|g(x)| + |\widehat{g}(x)| \leq C(1+|x|)^{-1-\delta}$ for some $C,\delta > 0$. The function $g$ satisfies that property because it is analytic, so bounded in any compact set, and it decreases as $|x|^{-2}$ when $|x| \to \infty$. To prove that property for $\widehat{g}$, we need the following lemma, very similar to \cite[Lemma 1]{OskolkovChakhkiev2010}.

\begin{lem}\label{thm:Fourier_Transform_Lemma}
The Fourier transform of $g$ defined in \eqref{eq:Definition_Of_G} is 
\begin{equation}
\widehat{g}(\xi) = 2\pi^2\,|\xi|\, \operatorname{erfc}\left( \frac{1-i}{\sqrt{2}}\,\pi\,|\xi| \right) - \sqrt{2\pi}\,(1+i)\,e^{i\pi^2 \xi^2},\qquad \forall \xi \in \mathbb{R}, 
\end{equation} 
where $\operatorname{erfc}(z) = 1-\operatorname{erf}(z)$ stands for the complementary error function and  $\operatorname{erf}(z) = \frac{2}{\sqrt{\pi}}\int_0^z{ e^{-w^2}\,dw }$ is the error function for $z \in \mathbb C$. Its asymptotic expansion for $x \in \mathbb{R}$ at infinity is
\begin{equation}\label{eq:Error_Function_Asymptotic_Expansion}
\operatorname{erfc}(x) = \frac{e^{-x^2}}{\sqrt{\pi}}\, \left( \frac{1}{x} +  \sum_{n=1}^N{ (-1)^n\, \frac{(2n-1)!!}{2^n\,x^{2n+1}}} \right) + O\left( \frac{1}{x^{2N+3}} \right), \qquad \forall N \in \mathbb{N}.
\end{equation}  
\end{lem}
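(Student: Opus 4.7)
The plan is to write $g$ as an average of chirps via
\[ g(x)=\frac{e^{-ix^2}-1}{x^2}=-i\int_0^1 e^{-itx^2}\,dt, \]
swap the Fourier transform with the $t$-integral, and apply the standard complex Gaussian identity $\widehat{e^{-itx^2}}(\xi)=\sqrt{\pi/(it)}\,e^{i\pi^2\xi^2/t}$ with the principal branch. The swap is justified by inserting a decaying regularisation $e^{-(\varepsilon+it)x^2}$, taking the Fourier transform classically, and passing to the limit $\varepsilon\to 0^{+}$ under the $t$-integral; the $t^{-1/2}$ factor produced by the formula is integrable on $(0,1)$, so dominated convergence applies. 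The outcome is
\[ \widehat g(\xi)=-\frac{(1+i)\sqrt{\pi}}{\sqrt 2}\int_0^1 t^{-1/2}\,e^{i\pi^2\xi^2/t}\,dt. \]

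Next I would turn the $t$-integral into a Fresnel-type tail by setting $u=\pi|\xi|/\sqrt t$, which rewrites it as $2\pi|\xi|\int_{\pi|\xi|}^\infty u^{-2}e^{iu^2}\,du$. A single integration by parts with $dv=du/u^2$ separates the boundary contribution from the Fresnel tail:
\[ 2\pi|\xi|\int_{\pi|\xi|}^\infty \frac{e^{iu^2}}{u^2}\,du = 2e^{i\pi^2\xi^2}+4\pi i |\xi|\int_{\pi|\xi|}^\infty e^{iu^2}\,du. \]
To produce $\operatorname{erfc}$, substitute $w=\tfrac{1-i}{\sqrt 2}u$ so that $w^2=-iu^2$ and $e^{-w^2}=e^{iu^2}$; the Fresnel tail becomes the integral of $e^{-w^2}$ along the ray $\tfrac{1-i}{\sqrt 2}[\pi|\xi|,\infty)$. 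A Jordan-type estimate on the circular arc of radius $R$ in the fourth quadrant, using the elementary bound $\cos\varphi\geq 1-2\varphi/\pi$ on $[0,\pi/2]$, shows that the arc contribution vanishes as $R\to\infty$, so Cauchy's theorem deforms the contour onto the horizontal ray starting at $\tfrac{1-i}{\sqrt 2}\pi|\xi|$, identifying the tail with $\tfrac{\sqrt\pi}{2}\operatorname{erfc}\bigl(\tfrac{1-i}{\sqrt 2}\pi|\xi|\bigr)$. Reassembling the prefactors then gives the stated formula for $\widehat g$.

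For the asymptotic expansion at real infinity, the plan is the classical one. Starting from $\operatorname{erfc}(x)=\tfrac{2}{\sqrt\pi}\int_x^\infty e^{-w^2}\,dw$, rewrite $e^{-w^2}\,dw=-\tfrac{1}{2w}\,d(e^{-w^2})$ and integrate by parts to get $e^{-x^2}/(x\sqrt\pi)$ plus a remainder proportional to $\int_x^\infty w^{-2}e^{-w^2}\,dw$. Iterating $N$ times produces the alternating series with coefficients $(-1)^n(2n-1)!!/2^n$ and a remainder of the form $c_N\int_x^\infty w^{-2N-2}e^{-w^2}\,dw$, which is $O(x^{-2N-3}e^{-x^2})$ by writing $e^{-w^2}=e^{-x^2}e^{-(w^2-x^2)}$ and using $e^{-(w^2-x^2)}\leq 1$ on $[x,\infty)$ before bounding the resulting polynomial integral.

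The only real obstacle is bookkeeping: tracking the eighth roots of unity and square-root branches that accumulate through the chirp Fourier transform, the $u$-substitution, and the contour rotation, so that the two complex prefactors in the final expression, namely $2\pi^2$ in front of $|\xi|\operatorname{erfc}(\cdot)$ and $-\sqrt{2\pi}(1+i)$ in front of $e^{i\pi^2\xi^2}$, come out exactly as stated.
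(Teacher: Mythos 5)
Your argument is correct and arrives at exactly the stated constants (I checked the prefactors: the boundary term of your integration by parts gives $-\sqrt{2\pi}(1+i)e^{i\pi^2\xi^2}$ and the Fresnel tail, after the rotation $w=\tfrac{1-i}{\sqrt2}u$, contributes $2\pi^2|\xi|\operatorname{erfc}(\tfrac{1-i}{\sqrt2}\pi|\xi|)$), but your route is genuinely different from the paper's. The paper integrates by parts directly in the Fourier integral, splitting $\widehat g$ into the chirp transform $\mathcal F(e^{-ix^2})$, the distributional transform $\mathcal F(1/x)=-\pi i\operatorname{sign}$, and their convolution, and then identifies $\int_{-|\xi|}^{|\xi|}e^{i\pi^2 y^2}\,dy$ with the error function along the ray $\eta(t)=\tfrac{1-i}{\sqrt2}\pi t$. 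You instead use the subordination $g(x)=-i\int_0^1 e^{-itx^2}\,dt$, exchange integrals after an $e^{-\varepsilon x^2}$ regularisation, and reduce everything to a single Fresnel tail that you rotate onto a horizontal contour. Your version avoids the principal-value transform of $1/x$ and the convolution step, at the price of justifying the interchange (which you do correctly: the $t^{-1/2}$ majorant is integrable on $(0,1)$) and the contour deformation (your Jordan-type arc estimate in the fourth quadrant is the right tool, and it is needed because $e^{-w^2}$ does not decay along the ray of argument $-\pi/4$). Both are complete proofs of the transform formula.

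One small bookkeeping slip in the asymptotic expansion: the remainder you describe, $c_N\int_x^\infty w^{-2N-2}e^{-w^2}\,dw$, is only $O(x^{-2N-1}e^{-x^2})$ if you bound it by $e^{-x^2}\int_x^\infty w^{-2N-2}\,dw$ as you propose, which is two powers short of the claimed $O(x^{-2N-3})$. To get the stated order, either perform one further integration by parts (so that the first neglected term, of size $e^{-x^2}x^{-2N-3}$, absorbs the new remainder), or bound the remainder integral as
\begin{equation}
\int_x^\infty \frac{e^{-w^2}}{w^{2N+2}}\,dw=\int_x^\infty \frac{2we^{-w^2}}{2w^{2N+3}}\,dw\leq \frac{1}{2x^{2N+3}}\int_x^\infty 2we^{-w^2}\,dw=\frac{e^{-x^2}}{2x^{2N+3}},
\end{equation}
which gives the correct exponent in one step. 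This is the classical argument the paper is alluding to when it says the expansion is obtained by repeated integration by parts.
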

\begin{rem}
The integral of the holomorphic function $e^{-w^2}$, $w \in \mathbb{C}$ in the definition of the error function can be computed along any path connecting 0 and $z$. 
\end{rem}
\begin{proof}
From the definition of $g$, integrating by parts we get
\begin{equation}
\widehat{g}(\xi) = -2i \, \int_{\mathbb{R}}{ e^{-ix^2}e^{-2\pi i \xi x}\,dx  } + 2 \pi i \xi \, \int_{\mathbb{R}}{  \frac{e^{-2\pi i \xi x}}{x}\,dx  } - 2\pi i \xi \int_{\mathbb{R}}{ \frac{e^{-ix^2}}{x}e^{-2\pi i \xi x}\,dx }. 
\end{equation} 
The first two integrals are the well-known
\begin{equation}\label{eq:Fourier_Transform_Of_Gaussian_And_1/x}
\mathcal{F}_x\left( e^{-ix^2} \right)(\xi) = \sqrt{\pi}\,\frac{1-i}{\sqrt{2}} \, e^{i\pi^2\xi^2},  \qquad    \mathcal{F}_x\left(1/x \right)(\xi) = -\pi i \operatorname{sign}(\xi),
\end{equation}
while the third one is the convolution of both of them, that is, 
\begin{equation}
\int_\mathbb{R} e^{i\pi^2x^2}\,\operatorname{sign}(\xi-x)\,dx =  \int_{-\infty}^{\xi} e^{i\pi^2x^2}\,dx - \int_\xi^\infty e^{i\pi^2x^2}\,dx  = \operatorname{sign}(\xi)\,\int_{-|\xi|}^{|\xi|}e^{i\pi^2x^2}\,dx.
\end{equation}
 Hence, 
\begin{equation}
\widehat{g}(\xi) = -\sqrt{2\pi}\,(1+i)\,e^{i\pi^2\xi^2}  + 2\pi^2 |\xi| - 4\pi^2\sqrt{\pi}\,\frac{1-i}{\sqrt{2}}|\xi|\,\int_0^{|\xi|}{e^{i\pi^2y^2}\,dy}.
\end{equation}     
The last integral is essentially $\operatorname{erf}(\textstyle{\frac{1-i}{\sqrt{2}}}\,\pi |\xi|)$, because with the path $\eta(t) = \textstyle{\frac{1-i}{\sqrt{2}}}\,\pi t$, $t \in (0,|\xi|)$ we get 
\begin{equation}
\operatorname{erf}\left(\frac{1-i}{\sqrt{2}}\,\pi |\xi|\right) = \frac{2}{\sqrt{\pi}}\, \int_0^{|\xi|} e^{-\eta(t)^2}\,\eta'(t)\,dt = 2\sqrt{\pi}\,\frac{1-i}{\sqrt{2}} \, \int_0^{|\xi|} e^{i \pi^2 t^2}\,dt.
\end{equation}
Thus,
\begin{equation}
\widehat{g}(\xi) = -\sqrt{2\pi}\,(1+i)\,e^{i\pi^2\xi^2}  + 2\pi^2 |\xi|\left( 1 -\operatorname{erf}\left( \frac{1-i}{\sqrt{2}}\pi |\xi| \right) \right). 
\end{equation}    
The asymptotic expansion of $\operatorname{erfc}(x)$ for $x \in \mathbb R$ is well-known and is obtained integrating its definition by parts $N$ times. 
\end{proof}

Since the error function is analytic, so is $\widehat{g}$. Also, $\operatorname{erfc}(x) = \pi^{-1/2}e^{-x^2}\left( x^{-1} + O(x^{-3}) \right)$, so we get
\begin{equation}\label{DecayForFourierTransformG}
\begin{split}
\widehat{g}(\xi) & = 2\pi^2\,|\xi|\, \pi^{-\frac12}e^{i \pi^2\xi^2}\left( \frac{1+i}{\sqrt{2}} \pi^{-1} |\xi|^{-1} + O(|\xi|^{-3}) \right) - \sqrt{2\pi}\,(1+i)\,e^{i\pi^2 \xi^2} = O(|\xi|^{-2}) \\
\end{split}
\end{equation} 
when $|\xi| > 1$. Thus, the hypotheses for the Poisson summation formula are satisfied and \eqref{PoissonSummationFormulaUsed} holds.

Given that $\widehat{g}(0) = \int_{\mathbb{R}}{g(x)\,dx} = -\sqrt{2\pi}\,(1+i)$ and that $g$ and $\widehat{g}$ are even,  Lemma~\ref{thm:Fourier_Transform_Lemma} implies
\begin{equation}
 \phi(h) =  \frac{1+i}{\sqrt{2\pi}}\,\sqrt{h} - \frac{\sqrt{h}}{\pi}\sum_{k=1}^{\infty}{ \left( \frac{\pi k}{ \sqrt{h}}\, \operatorname{erfc}\left( \frac{1-i}{2\sqrt{2}}\, \frac{ k}{ \sqrt{h}} \right) - \sqrt{2\pi}\,(1+i)\,e^{\frac{i k^2}{4h} }  \right) }.
\end{equation}
For each $k \in \mathbb{N}$ and for any  $N \in \mathbb{N}$, the asymptotic expansion of $\operatorname{erfc}$ in \eqref{eq:Error_Function_Asymptotic_Expansion} gives
\begin{equation}
  \frac{\pi k}{ \sqrt{h}}\, \operatorname{erfc}\left( \frac{1-i}{2\sqrt{2}}\, \frac{ k}{ \sqrt{h}} \right) - \sqrt{2\pi} \,  e^{\frac{ik^2}{4h}}\, (1+i)  = \sqrt{\pi}\frac{1+i}{\sqrt{2}} e^{\frac{ik^2}{4h}} \sum_{n=1}^N{ \frac{(2n-1)!!\,2^{n+1}\, h^n}{i^n\,k^{2n}} } + O\left( \frac{\sqrt{h}}{k} \right)^{2N+2} .
\end{equation} 
Sum in $k \in \mathbb{N}$ and change the order of summation to get
\begin{equation}\label{Asymptotic0Preliminary}
\phi(h) =  \frac{1+i}{\sqrt{2\pi}}\,\sqrt{h} - \frac{1-i}{\sqrt{2\pi}}\,\sum_{n=1}^{N}{ \frac{2^{n+1} \, (2n-1)!!}{i^{n-1}}\left( \sum_{k=1}^{\infty}{ \frac{e^{ ik^2/(4h)}}{k^{2n}} } \right) h^{n+\frac12}}  + O\left( h^{N+\frac32} \right)
\end{equation} 
 for any $N \in \mathbb{N}$, which is the asymptotic behavior of $\phi$ around 0.
 
For negative values $h<0$, the property $\phi(-h) = \overline{\phi(h)}$ implies that \eqref{Asymptotic0Preliminary} is correct up to determining $\sqrt{h} = \pm i \sqrt{|h|}$. Indeed, writing $h = -|h| < 0$ and conjugating \eqref{Asymptotic0Preliminary} we have
\begin{equation}
\phi(-|h|)  = \frac{1-i}{\sqrt{2\pi}}\,\sqrt{|h|} - \frac{1+i}{\sqrt{2\pi}}\,\sum_{n=1}^{N}{ 2^{n+1}\, i^{n-1} \, (2n-1)!!\left( \sum_{k=1}^{\infty}{ \frac{e^{\frac{-ik^2}{4|h|}}}{k^{2n}} } \right) |h|^{n+\frac12}}  + O\left( h^{N+\frac32} \right),
\end{equation} 
while direct substitution in \eqref{Asymptotic0Preliminary} leads to
\begin{equation}
\phi(-|h|) =   \frac{1+i}{\sqrt{2\pi}}\,\sqrt{-|h|} - \frac{1-i}{\sqrt{2\pi}}\,\sum_{n=1}^{N}{ \frac{2^{n+1} \, (2n-1)!!}{i^{n-1}}\left( \sum_{k=1}^{\infty}{ \frac{e^{\frac{ik^2}{-4|h|}}}{k^{2n}} } \right) (-|h|)^{n+\frac12}}  + O\left( h^{N+\frac32} \right).
\end{equation}
These two expressions coincide if $\sqrt{-1} = -i$, so \eqref{Asymptotic0Preliminary} works also for $h<0$ with the branch of the complex square root with $\sqrt{-1} = -i$.
 
 In short, we have proved the following proposition.
\begin{prop}\label{thm:Asymptotic_At_0}
Let 
\begin{equation}\label{eq:Spiral_0}
Y_n(h) = \sum_{k=1}^{\infty}{ \frac{e^{ik^2/(4h)}}{k^{2n}} }, \qquad n \in \mathbb{N},
\end{equation}
and $N \in \mathbb{N}$. Then, 
\begin{equation}\label{eq:Asymptotic_0_Complete}
\phi(h) =  \frac{1+i}{\sqrt{2\pi}}\,\sqrt{h} - \frac{1-i}{\sqrt{2\pi}}\,\sum_{n=1}^{N}{ \frac{2^{n+1} \, (2n-1)!!}{i^{n-1}}\, Y_n(h) \, h^{n+\frac12}}  + O\left( h^{N+\frac32} \right) 
\end{equation} 
for every $h \in \mathbb{R}$, where $\sqrt{-1} = -i$ if $h<0$. In particular, when $N=1$, we get the self-similar asymptotic expression
\begin{equation}\label{eq:Asymptotic_0_Selfsimilar}
 \phi(h)  = \frac32\,\frac{1+i}{\sqrt{2\pi}}\,\sqrt{h} - 4\pi^2\, \frac{1-i}{\sqrt{2\pi}} \left[ \frac16 - 2\phi\left( \frac{-1}{16\pi^2h} \right) \right] h^{3/2} + O\left( h^{5/2}\right).
\end{equation}
\end{prop}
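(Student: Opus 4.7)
The plan is to rewrite $\phi(h)$ as a lattice sum suited to Poisson summation, then turn the dual sum into the claimed expansion via the asymptotic behavior of the complementary error function supplied by Lemma~\ref{thm:Fourier_Transform_Lemma}.

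First I would handle the case $h>0$. Using $-4\pi^2 k^2 h = -(2\pi k \sqrt{h})^2$ in the definition \eqref{Phi}, I would write
\[
\phi(h) = -h\sum_{k\in\mathbb{Z}} g(2\pi k \sqrt{h}), \qquad g(x) = \frac{e^{-ix^2}-1}{x^2},
\]
after observing that $g$ extends analytically at $x=0$ with $g(0)=-i$, so the $k=0$ term contributes $-ih$ which is absorbed in an appropriate principal-value sense. To apply the Poisson summation formula I would verify the required decay: $g$ is entire of moderate growth and decays like $|x|^{-2}$, while Lemma~\ref{thm:Fourier_Transform_Lemma} and the leading order of $\mathrm{erfc}$ in \eqref{eq:Error_Function_Asymptotic_Expansion} give $\widehat{g}(\xi)=O(|\xi|^{-2})$ at infinity, so the hypotheses of the Poisson formula (as in \cite[Theorem 3.1.17]{Grafakos2009}) are met and
\[
\phi(h) = -\frac{\sqrt{h}}{2\pi}\sum_{k\in\mathbb{Z}} \widehat{g}\!\left(\frac{k}{2\pi\sqrt{h}}\right).
\]

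Next I would isolate the $k=0$ term, which equals $-\sqrt{h}/(2\pi)\cdot\widehat{g}(0)=-\sqrt{h}/(2\pi)\cdot(-\sqrt{2\pi}(1+i)) = (1+i)/\sqrt{2\pi}\cdot\sqrt{h}$, giving the leading term of \eqref{eq:Asymptotic_0_Complete}. For $k\neq 0$, evenness of $g$ (and hence of $\widehat g$) reduces the sum to $k\geq 1$, and I would substitute the exact expression from Lemma~\ref{thm:Fourier_Transform_Lemma}:
\[
\frac{\pi k}{\sqrt{h}}\,\mathrm{erfc}\!\left(\tfrac{1-i}{2\sqrt{2}}\tfrac{k}{\sqrt{h}}\right) - \sqrt{2\pi}(1+i)\,e^{ik^2/(4h)}.
\]
Here the exponential $e^{-x^2}$ that appears in \eqref{eq:Error_Function_Asymptotic_Expansion} produces exactly the factor $e^{ik^2/(4h)}$ needed for the spiral $Y_n(h)$, and the leading $1/x$ term of the expansion cancels the $-\sqrt{2\pi}(1+i)e^{ik^2/(4h)}$ contribution, so only $n\geq 1$ terms remain. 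Collecting the constants $(2n-1)!!/2^n\cdot x^{-(2n+1)}$ with $x=\tfrac{1-i}{2\sqrt{2}}k/\sqrt{h}$ produces the factor $\tfrac{2^{n+1}(2n-1)!!}{i^{n-1}}\,h^{n+1/2}/k^{2n}$ after simplifying the eighth roots of unity $((1-i)/\sqrt{2})^{-(2n+1)}$. Summing over $k\geq 1$ yields $Y_n(h)$, and the swap of summation over $n$ and $k$ is justified because the remainder in \eqref{eq:Error_Function_Asymptotic_Expansion} is uniform in $k\geq 1$ and absolutely summable for each fixed $N$.

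For $h<0$ I would use $\phi(-h)=\overline{\phi(h)}$, which is immediate from \eqref{Phi}. Substituting $h\mapsto -|h|$ into \eqref{eq:Asymptotic_0_Complete} and conjugating the expansion for $|h|$ gives the same formula provided one selects the branch $\sqrt{-1}=-i$; the two expressions match termwise because conjugation sends $1\pm i$ to $1\mp i$ and flips the sign in $e^{ik^2/(4h)}$, which together with the sign of $i^{n-1}$ versus $1/i^{n-1}$ is exactly absorbed by the choice $(-|h|)^{n+1/2}=(-i)^{2n+1}|h|^{n+1/2}$. Finally, the self-similar form \eqref{eq:Asymptotic_0_Selfsimilar} follows by observing that at $N=1$ the spiral satisfies $Y_1(h)= \sum_{k\geq 1}e^{ik^2/(4h)}/k^2$, and relating this to $\phi(-1/(16\pi^2 h))$ via \eqref{Phi} with $-4\pi^2 k^2\cdot(-1/(16\pi^2 h)) = k^2/(4h)$, isolating the constant term $\sum_{k\geq 1}1/(4\pi^2 k^2)=1/24$ to produce the $1/6$ that appears in \eqref{eq:Asymptotic_0_Selfsimilar}.

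The main technical obstacle is the careful bookkeeping in the third paragraph: tracking the powers of $(1-i)/\sqrt{2}$ and the factors $i^{n-1}$ through the $\mathrm{erfc}$ expansion, and justifying the Fubini-type interchange of the $n$- and $k$-sums together with the uniform remainder estimate $O(h^{N+3/2})$ after summation over $k$. The decay of the Schwartz-like remainder is not automatic because the expansion \eqref{eq:Error_Function_Asymptotic_Expansion} has an error $O(x^{-(2N+3)})$ only for $x$ large; for small $x$ (i.e.\ $k/\sqrt{h}$ bounded) one must use instead the analyticity of $\widehat g$ and bound the contribution of finitely many small $k$ directly, splitting the $k$-sum at a threshold depending on $h$ to obtain the claimed uniform bound.
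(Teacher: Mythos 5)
Your proposal is correct and follows essentially the same route as the paper: the decomposition $\phi(h)=-h\sum_k g(2\pi k\sqrt h)$, Poisson summation justified via Lemma~\ref{thm:Fourier_Transform_Lemma}, the $\operatorname{erfc}$ expansion with the same bookkeeping of the eighth roots of unity, the conjugation symmetry $\phi(-h)=\overline{\phi(h)}$ fixing the branch $\sqrt{-1}=-i$, and the identity $Y_1(h)=\tfrac{\pi^2}{6}-\tfrac{i}{8h}-2\pi^2\phi(-1/(16\pi^2h))$ for the self-similar form. The only remark worth making is that your final worry about small arguments of $\operatorname{erfc}$ is moot: for $h\to 0$ every $k\geq 1$ gives $k/\sqrt h\to\infty$, so the expansion \eqref{eq:Error_Function_Asymptotic_Expansion} applies uniformly and no splitting of the $k$-sum is needed.
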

 
The only thing left to prove is the self-similar expression \eqref{eq:Asymptotic_0_Selfsimilar}, which holds because 
\begin{equation}\label{SelfSimilarity0}
Y_1(h) = \frac{\pi^2}{6} - \frac{i}{8h} - 2\pi^2\,\phi\left( \frac{-1}{16\pi^2h} \right).
\end{equation}
In turn, this last identity is easy to prove using \eqref{FromPhiDuistermaatToPhi}, given that $Y_1(h) = i\pi \phi_D(1/(4\pi h))$.


\begin{figure}[h]
  \centering
  \begin{subfigure}[b]{0.45\textwidth}
    \includegraphics[width=\textwidth]{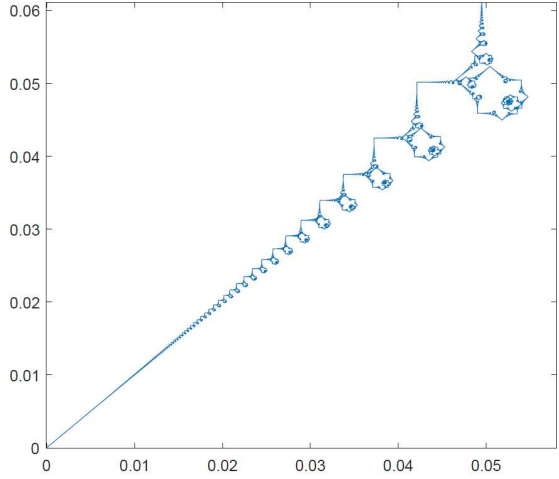}
    \caption{Zoom of $\phi(\mathbb R)$ around $\phi(0)=0$, located on the lower left corner.}
    \label{fig:Zoom0}
  \end{subfigure}
  \hspace{0.05\textwidth}
  \begin{subfigure}[b]{0.45\textwidth}
    \includegraphics[width=\textwidth]{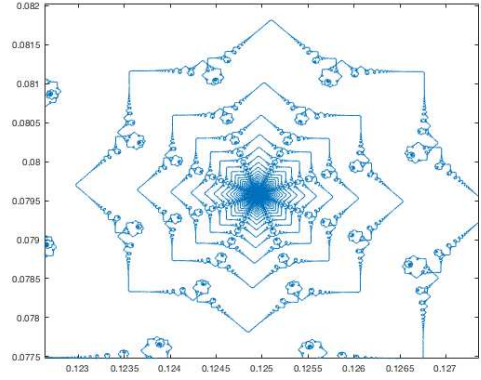}
    \caption{Zoom of $\phi(\mathbb R)$ around $\phi(t_{1,2})$, located in the center of the spiral.}
    \label{fig:Zoom12}
  \end{subfigure}
  \caption{Graphic visualization of the asymptotic behavior of $\phi$ around 0 and $t_{1,2}$. Compare Figure~\ref{fig:Zoom0} to Figure~\ref{FIG_Curva} to appreciate the self-similar patterns, which are analytically explained by \eqref{eq:Asymptotic_0_Selfsimilar} in Proposition~\ref{thm:Asymptotic_At_0}. In Figure~\ref{fig:Zoom12}, the spiraling pattern is a consequence of \eqref{eq:Asymptotic_At_12_Short} in Proposition~\ref{thm:Asymptotic_At_1_2} and the definition of $Z_1$ \eqref{Spiral12}. }
  \label{fig:ZoomBasic} 
\end{figure}

\subsection{Asymptotic behavior around $t_{1,2}$}\label{Section_AsymptoticIn12}


An easy way to deduce the asymptotic behavior of $\phi$ around $t_{1,2}$ is by means of the identity 
\begin{equation}\label{IdentityFrom12To0}
\phi(h + t_{1,2}) = \frac18 + \frac{i}{4\pi} + \frac{\phi(4h)}{2} - \phi(h),
\end{equation} 
which can be proved by splitting the sum in the definition of $\phi$ into the even and odd indices. What is more, evaluating it at $h=0$ gives  $\phi(t_{1,2}) = 1/8 + i/(4\pi)$, so
\begin{equation}
\phi( t_{1,2} + h ) - \phi(t_{1,2}) =  \frac{\phi(4h)}{2} - \phi(h).
\end{equation}
We can now use Proposition~\ref{thm:Asymptotic_At_0}. The leading square root terms cancel, so  $h^{3/2}$ becomes the leading order. Moreover, the coefficients of the higher order terms are
\begin{equation}\label{Spiral12}
4^n\,Y_n(4h) - Y_n(h) = 4^n \, Z_n(h), 
 \qquad \text{where} \quad  Z_n(h) = \sum_{\substack{k=1 \\ k \text{ odd}}}^{\infty}{ \frac{e^{ik^2/(16h)}}{k^{2n}} }.
\end{equation}
As a consequence, the asymptotic behavior of $\phi$ around $t_{1,2}$ can be written as follows.
\begin{prop}\label{thm:Asymptotic_At_1_2}
Let $N \in \mathbb{N}$. Then, 
\begin{equation}
\phi(t_{1,2}+h) - \phi(t_{1,2}) = -\frac{1-i}{\sqrt{2\pi}}\, \sum_{n=1}^N{ \frac{2^{3n+1}\,(2n-1)!!}{i^{n-1}}\, Z_n(h)\, h^{n+\frac12} } + O\left( h^{N+\frac32} \right)
\end{equation}   
for every $h \in \mathbb{R}$, where $\sqrt{-1} = -i$ when $h<0$. In particular, when $N=1$, 
\begin{equation}\label{eq:Asymptotic_At_12_Short}
 \phi(t_{1,2}+h) - \phi(t_{1,2}) = -16\,\frac{1-i}{\sqrt{2\pi}}\,Z_1(h)\,h^{3/2} + O\left( h^{5/2} \right). 
\end{equation}
\end{prop}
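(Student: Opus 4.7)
\textbf{Proof plan for Proposition~\ref{thm:Asymptotic_At_1_2}.} The plan is to reduce the problem to Proposition~\ref{thm:Asymptotic_At_0} via the functional identity \eqref{IdentityFrom12To0}, which the text asserts follows from splitting the defining series of $\phi$ according to the parity of the summation index. First I would verify \eqref{IdentityFrom12To0}: write
\[
\phi(t_{1,2}+h)=\sum_{k\in\mathbb{Z}}\frac{e^{-4\pi^{2}ik^{2}(t_{1,2}+h)}-1}{-4\pi^{2}k^{2}},
\]
separate the $k=2j$ and $k=2j+1$ terms, and use $e^{-4\pi^{2}i(2j+1)^{2}t_{1,2}}=e^{-i\pi}=-1$ together with telescoping of the constants. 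Setting $h=0$ in \eqref{IdentityFrom12To0} immediately gives $\phi(t_{1,2})=1/8+i/(4\pi)$, so that
\[
\phi(t_{1,2}+h)-\phi(t_{1,2})=\tfrac{1}{2}\phi(4h)-\phi(h).
\]

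Next I would substitute the expansion \eqref{eq:Asymptotic_0_Complete} for $\phi(4h)$ and $\phi(h)$. The leading square-root terms are $\tfrac{1}{2}\cdot\tfrac{1+i}{\sqrt{2\pi}}\sqrt{4h}$ and $\tfrac{1+i}{\sqrt{2\pi}}\sqrt{h}$, which exactly cancel. For the $n$-th order term, the coefficient in $\tfrac{1}{2}\phi(4h)-\phi(h)$ becomes
\[
-\frac{1-i}{\sqrt{2\pi}}\cdot\frac{2^{n+1}(2n-1)!!}{i^{n-1}}\Bigl(\tfrac{1}{2}\,4^{n+1/2}Y_n(4h)-Y_n(h)\Bigr)h^{n+1/2}
=-\frac{1-i}{\sqrt{2\pi}}\cdot\frac{2^{n+1}(2n-1)!!}{i^{n-1}}\bigl(4^{n}Y_n(4h)-Y_n(h)\bigr)h^{n+1/2},
\]
since $\tfrac{1}{2}\,4^{n+1/2}=4^{n}$. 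The remainders combine to $O(h^{N+3/2})$ because $(4h)^{N+3/2}$ is absorbed into $O(h^{N+3/2})$.

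The key combinatorial step is the identity $4^{n}Y_n(4h)-Y_n(h)=4^{n}Z_n(h)$ announced in \eqref{Spiral12}. To verify it, split $Y_n(4h)=\sum_{k\ge 1}e^{ik^{2}/(16h)}k^{-2n}$ into even and odd $k$: the odd part is exactly $Z_n(h)$, while the even part $\sum_{j\ge 1}e^{i(2j)^{2}/(16h)}(2j)^{-2n}$ equals $4^{-n}Y_n(h)$ after the substitution $k=2j$. Rearranging yields the identity. Multiplying through by $4^{n}$ and inserting the factor $2^{n+1}\cdot 4^{n}=2^{3n+1}$ gives the claimed coefficient $2^{3n+1}(2n-1)!!/i^{n-1}$.

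Finally, I would record the special case $N=1$: since $(2\cdot 1-1)!!=1$ and $i^{0}=1$, the leading coefficient is $16$ and one recovers \eqref{eq:Asymptotic_At_12_Short}. For $h<0$ the branch choice $\sqrt{-1}=-i$ is inherited from Proposition~\ref{thm:Asymptotic_At_0} through the identity $\phi(-h)=\overline{\phi(h)}$, applied to both $\phi(4h)$ and $\phi(h)$ simultaneously. The only place where one must be careful is the bookkeeping in the exponents $4^{n+1/2}$ and in aligning the error terms; the rest is a direct substitution. I do not anticipate a real obstacle, as the heavy analytic content is already packaged inside Proposition~\ref{thm:Asymptotic_At_0}.
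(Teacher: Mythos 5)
Your proposal is correct and follows exactly the paper's route: the identity \eqref{IdentityFrom12To0} obtained by parity-splitting the defining series, evaluation at $h=0$ to get $\phi(t_{1,2})=1/8+i/(4\pi)$, substitution of Proposition~\ref{thm:Asymptotic_At_0} into $\tfrac12\phi(4h)-\phi(h)$ with cancellation of the $\sqrt{h}$ terms, and the even/odd splitting $4^nY_n(4h)-Y_n(h)=4^nZ_n(h)$ of \eqref{Spiral12}. All the coefficient bookkeeping ($\tfrac12\,4^{n+1/2}=4^n$, $2^{n+1}\cdot4^n=2^{3n+1}$) checks out, so there is nothing to add.
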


\begin{rem}
The function $Z_1(h)$ turns around the origin in a circular pattern, and the more $h$ approaches to zero, the faster it does it. Since in \eqref{eq:Asymptotic_At_12_Short} it is multiplied by $h^{3/2}$, which tends to zero when $h \to 0$, this circular pattern turns into a spiral that concentrates in $\phi(t_{1,2})$ (see Figure~\ref{fig:Zoom12}). 
\end{rem}

\begin{rem}
Identities similar to \eqref{IdentityFrom12To0} can be obtained for other rationals such as $t_{1,3}, t_{1,4},t_{1,6}$ and $t_{1,8}$. Consequently, one can prove the asymptotic behavior of $\phi$ around those points with as much precision as wanted. 
\end{rem}

\section{Asymptotic behavior around rationals}\label{Section_Rationals}

Once we know the asymptotic behavior around 0 and $t_{1,2}$, we compute the case of a general rational $t_{p,q}$. For that, we make the reduction process explained in Subsection~\ref{Section_LookingForTransformations} rigorous. First of all, the formal identity \eqref{AsymptoticWithTheta} in which the reduction is based is made precise by
\begin{equation}
\phi(t) = i\, \lim_{\epsilon \to 0^+} \int_0^t \theta(-4\pi \tau + i\epsilon)\,d\tau.
\end{equation}
This is a consequence of Fubini's theorem and the dominated convergence theorem. Consequently, we get the rigorous version of \eqref{eq:Asymptotic_Ready_For_Reduction_With_Modular_Group_Rationals},
\begin{equation}\label{eq:Asymptotic_With_Limit_2}
 \phi\left( t_{p,q} + h \right) - \phi(t_{p,q}) = \frac{i}{4\pi}\,\lim_{\epsilon \to 0^+} \int_{\tilde{p}/\tilde{q}}^{\tilde{p}/\tilde{q} + 4\pi h}{\theta(-\tau + i\epsilon)\,d\tau}.
\end{equation}
Let now $\gamma \in \Gamma_\theta$ and use the transformation \eqref{eq:Theta_Transformation_General} for the Jacobi $\theta$ function so that, after conjugation, \eqref{eq:Asymptotic_With_Limit_2} turns into
\begin{equation}\label{eq:Asymptotic_At_Q013_Before_Integrating_By_Parts}
\overline{ \phi(t_{p,q} + h) - \phi(t_{p,q}) } = \frac{1}{4\pi i e_\gamma}\lim_{\epsilon\to 0^+}\int_{\tilde{p}/\tilde{q}}^{ \tilde{p}/\tilde{q} + 4\pi h } \frac{ \theta(\gamma(\tau + i \epsilon)) }{\sqrt{ c(\tau + i\epsilon) + d }  }\,d\tau.
\end{equation}
Observing that $\phi'(z) = i\theta(-4\pi z)$ whenever $\operatorname{Im}z >0$, integrate by parts choosing
\begin{equation}
\begin{array}{ll}
u = \frac{1}{\gamma'(\tau + i \epsilon)\, \sqrt{c(\tau + i \epsilon) + d}} = (c(\tau + i \epsilon) + d)^{3/2}, &   du = \frac{3c}{2}\, \sqrt{c(\tau + i \epsilon) + d}\,d\tau,  \\
dv = \theta(\gamma(\tau + i\epsilon)))\,\gamma'(\tau + i\epsilon)\,d\tau, & v = 4\pi i\, \phi\left(-\frac{\gamma(\tau + i\epsilon)}{4\pi}\right),
\end{array}
\end{equation}
which yields
\begin{equation}
 \frac{1}{e_\gamma}\,\lim_{\epsilon \to 0}\Bigg[\phi\left(-\frac{\gamma(\tau + i\epsilon)}{4\pi}\right)\,(c(\tau + i \epsilon) + d)^{\frac32} \Bigg|_{\tilde{p}/\tilde{q}}^{\tilde{p}/\tilde{q} + 4\pi h}   - \frac{3c}{2}\, \int_{\tilde{p}/\tilde{q}}^{\tilde{p}/\tilde{q} + 4\pi h} \phi\left(-\frac{\gamma(\tau + i\epsilon)}{4\pi}\right)\,  \sqrt{c(\tau + i \epsilon) + d} \,d\tau \Bigg].
\end{equation}
This allows to work exclusively with $\phi$, which is well-defined on the real line. Clearly, we can now take the limit $\epsilon \to 0$ in the first term. In the second term, due to the fact that the integrating interval is finite, everything inside the integral is bounded independently of $\epsilon$. Thus, the limit can be taken inside by the theorem of dominated convergence to get
\begin{equation}\label{eq:Valid_For_Any_Gamma}
 \phi(t_{p,q} + h)  - \phi(t_{p,q})   = e_\gamma \,\Bigg[\phi\left(\frac{\gamma(\tau)}{4\pi}\right)\,(c\tau  + d)^{3/2} \Bigg|_{\tilde{p}/\tilde{q}}^{\tilde{p}/\tilde{q} + 4\pi h}  - \frac{3c}{2}\, \int_{\tilde{p}/\tilde{q}}^{\tilde{p}/\tilde{q} + 4\pi h} \phi\left(\frac{\gamma(\tau)}{4\pi}\right)\,  \sqrt{c\tau  + d} \,d\tau \Bigg].
\end{equation}

%

\subsection{Asymptotic behavior around $t_{p,q}$ with $q \equiv 0,1,3 \pmod{4}$}\label{Section_AsymptoticInQ013}

	Let $p/q$ be an irreducible fraction such that $q \equiv 0,1,3 \pmod{4}$. In Subsection~\ref{Section_LookingForTransformations} we found $\gamma \in \Gamma_{\theta}$ such that $\gamma(\tilde{p}/\tilde{q}) = 0$, where $\tilde{p}/\tilde{q} = 2p/q$. Recalling \eqref{eq:Upper_Boundary_Of_Integral}, the definition of $b(h)$ in \eqref{eq:Definition_Of_B} and $c = c_\pm$, 
\begin{equation}
\phi(t_{p,q} + h) - \phi(t_{p,q})  = e_\gamma\,  \Bigg[  \frac{(1+4\pi c_\pm \tilde{q} h)^{3/2}}{\tilde{q}^ {3/2}}  \, \phi\left( b(h) \right)   - \frac32 c_\pm \, \int_{\tilde{p}/\tilde{q}}^{\tilde{p}/\tilde{q} + 4\pi h}{ \phi\left( \frac{\gamma(\tau)}{4\pi} \right) \sqrt{c_\pm \tau + d}  \,d\tau} \Bigg]
\end{equation}
Change variables $\gamma(\tau)/4\pi = r$ as in \eqref{eq:Changing_Variables} to get
\begin{equation}\label{eq:Asymptotic_Closed_Form_013}
\phi(t_{p,q} + h) - \phi(t_{p,q}) =  e_\gamma \left[ \frac{ \phi\left( b(h) \right)}{\left(\tilde{q} - 4\pi c_\pm b(h)\right)^{3/2}} - 6\pi c_{\pm}\,\int_0^{b(h)}{ \frac{\phi(r)}{(\tilde{q}-4\pi c_{\pm} r)^{5/2}}\,dr } \right].
\end{equation}

We can already use the asymptotic behavior around 0 in $\phi(b(h))$ and $\phi(r)$ because $b(h)$ behaves like $\tilde{q}^2h$ when $h$ is small. For simplicity, call $b=b(h)$. Develop $(\tilde{q} - 4\pi c_\pm b)^{-3/2}$ and $(\tilde{q} - 4\pi c_\pm b)^{-5/2}$ using the Taylor series 
\begin{equation}\label{TaylorSeries}
(1-x)^{-\alpha} = \sum_{n=0}^{\infty}{ \binom{n + \alpha - 1}{n}x^n },  \qquad |x| < 1,
\end{equation} 
 which can be done because $4\pi c_{\pm}b(h)/\tilde{q} < 1$ for all $h \in \mathbb{R}$. Also, develop $\phi(b)$ following Proposition~\ref{thm:Asymptotic_At_0} so that we get   
\begin{equation}\label{AsymptoticsWithB013}
\phi(t_{p,q} + h) - \phi(t_{p,q}) = \frac{e_{\gamma}}{\tilde{q}^{3/2}}  \left[ \frac{1+i}{\sqrt{2\pi}}\,b^{\frac12} + \left( 2\pi\,\frac{1+i}{\sqrt{2\pi}}\,\frac{c_{\pm}}{\tilde{q}} - 4\,\frac{1-i}{\sqrt{2\pi}}\,Y_1(b) \right)\,b^{\frac32} + O\left( b^{\frac52}\right) \right]. 
\end{equation} 
Computing further terms requires integrating $r^{3/2}Y_1(r)$. Using  \eqref{TaylorSeries} again, expand 
\begin{equation}\label{TaylorSeriesOfB}
\begin{split}
& b(h)^{1/2} = \tilde{q}\,h^{1/2} \left( 1 - 2\pi\,c_{\pm}\tilde{q}h + O\left(c_{\pm}^2\,\tilde{q}^2\,|h|^{3/2} \right) \right), \qquad  b(h)^{3/2} = \tilde{q}^3\,h^{3/2} \left(1 + O\left(\tilde{q}\,|c_{\pm}h | \right)\right), \\
& \qquad \qquad \qquad \qquad \qquad \qquad \qquad  b^{5/2}(h) =O\left(q^5|h|^{5/2} \right),
\end{split}
\end{equation} 
which according to the definition of $b(h)$ are valid only if $4\pi |c_{\pm}\tilde{q} h| < 1$. We use them to expand \eqref{AsymptoticsWithB013} in terms of $h$ and obtain
\begin{equation}
 \phi(t_{p,q}+h) - \phi(t_{p,q}) = e_{\gamma}\,\left(  \frac{1+i}{\sqrt{2\pi}}\,\frac{h^{1/2}}{\tilde{q}^{1/2}} -4\,\frac{1-i}{\sqrt{2\pi}}\,Y_1(b(h))\,\tilde{q}^{3/2}\,t^{3/2} + O\left( \tilde{q}^{\frac72}\,h^{\frac52} \right) \right), 
\end{equation} 
valid for $\tilde{q}^2\,h < 1/(4\pi c_+/\tilde{q})$ when $h >0$ and for $\tilde{q}^2\,|h| < 1/(4\pi |c_-|/\tilde{q})$ when $h <0$. This is the asymptotic behavior we looked for, which we write in the following proposition:

\begin{prop}\label{thm:Asymptotic_At_Q013}
Let $p, q \in \mathbb{N}$ such that $q \equiv 0,1,3 \, (\text{mod } 4)$, $p<q$ and $\operatorname{gcd}(p,q) = 1$. Define $\tilde{p}$ and $\tilde{q}$ so that $\tilde{p}/\tilde{q} = 2p/q$ is an irreducible fraction, and set 
\begin{equation}
Y_1(h) = \sum_{k=1}^{\infty}{ \frac{e^{ik^2/(4h)}}{k^{2}} } \qquad \text{ and } \qquad b(h) = \left\{
\begin{array}{ll}
\frac{\tilde{q}^2h}{1+4\pi c_{+} \tilde{q} h}, & \text{when } h \geq 0, \\
\frac{\tilde{q}^2h}{1+4\pi c_{-} \tilde{q} h}, & \text{when } h < 0,
\end{array}
\right.
\end{equation}  
where $\tilde{q} \leq c_+, |c_-| \leq 4\tilde{q}$ as in Subsection~\ref{Section_LookingForTransformations}. Then, there exists a complex eighth root of unity $e_{p,q}$ depending only on $p$ and $q$ such that
\begin{equation}\label{eq:Asymptotic_At_Q013_Principal}
\phi(t_{p,q}+h) -  \phi(t_{p,q})  = \frac{e_{p,q}}{\sqrt{\pi}}\frac{1+i}{\sqrt{2}}\,\Bigg( \frac{h^{1/2}}{\tilde{q}^{1/2}}  + 4\,i\,Y_1(b(h))\,\tilde{q}^{3/2}\,h^{3/2}  + O\left( \tilde{q}^{7/2}\,h^{5/2} \right) \Bigg),
\end{equation}    
which is valid when $|h| \leq 1/(4\pi\frac{|c_\pm|}{\tilde{q}}\tilde{q}^2)$ and where $c_\pm = c_+$ when $h>0$ and   $c_\pm = c_-$ when $h<0$. Also, $\sqrt{-1} = -i$ when $h < 0$. The corresponding the self-similar form is
\begin{equation}\label{eq:Asymptotic_At_Q013_Selfsimilar}
\begin{split}
& \phi(t_{p,q}+h) - \phi(t_{p,q}) \\
& \qquad \qquad  = \frac32\,\frac{e_{p,q}}{\sqrt{\pi}}\frac{1+i}{\sqrt{2}}\,\Bigg[ \frac{h^{1/2}}{\tilde{q}^{1/2}} + \frac{8\pi^2}{3} i \left( \frac16 - \frac{i}{2\pi}\,\frac{c_\pm}{\tilde{q}} - 2\phi\left( \frac{-1}{16\pi^2b(h)} \right) \right)\,\tilde{q}^{\frac32}\,h^{\frac32} + O\left( \tilde{q}^{\frac72}\,h^{\frac52} \right) \Bigg] 
\end{split}
\end{equation} 
for the same values $|h| \leq 1/(4\pi\frac{|c_\pm|}{\tilde{q}}\tilde{q}^2)$ as above. 
Also equivalently, the above is rescaled as
\begin{equation}\label{eq:Asymptotic_At_Q013_Rescaled}
\begin{split}
&  \phi\left(t_{p,q}+\frac{h}{\tilde{q}^2}\right) -  \phi(t_{p,q}) \\
& \qquad \qquad  = \frac{3}{2\sqrt{\pi}}\,\frac{1+i}{\sqrt{2}}\,\frac{e_{p,q}}{\tilde{q}^{3/2}}  \, \left[ h^{\frac12} + \frac{8\pi^2}{3} i \left( \frac16 - \frac{i}{2\pi}\,\frac{c_\pm}{\tilde{q}} - 2\phi\left( \frac{-1}{16\pi^2\beta(h)} \right) \right)h^{\frac32}  + O\left( h^{\frac52} \right) \right] 
\end{split}
\end{equation}
for all $|h| \leq 1/(4\pi\frac{|c_\pm|}{\tilde{q}})$, where $\beta(h) = b(h/\tilde{q}^2)$. 
\end{prop}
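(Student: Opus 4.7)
The plan is to combine the general identity \eqref{eq:Valid_For_Any_Gamma}, which is valid for any $\gamma \in \Gamma_\theta$, with the specific $\gamma$ constructed in Subsection~\ref{Subsection_TransformationNotBothOdd} that sends $\tilde{p}/\tilde{q}$ to $0$. After the change of variables $\gamma(\tau)/(4\pi) = r$ (whose inverse and derivative are given by \eqref{eq:Changing_Variables}), the boundary terms become $\phi(0) = 0$ at the lower endpoint and involve $b(h)$ at the upper endpoint via \eqref{eq:Upper_Boundary_Of_Integral}. This yields the closed-form reduction
\begin{equation}
\phi(t_{p,q} + h) - \phi(t_{p,q}) =  e_\gamma \left[ \frac{ \phi\left( b(h) \right)}{\left(\tilde{q} - 4\pi c_\pm b(h)\right)^{3/2}} - 6\pi c_{\pm}\,\int_0^{b(h)}{ \frac{\phi(r)}{(\tilde{q}-4\pi c_{\pm} r)^{5/2}}\,dr } \right],
\end{equation}
which is exactly \eqref{eq:Asymptotic_Closed_Form_013}. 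The use of $c_\pm$ from Remark~\ref{RemarkForC013} is what guarantees $4\pi c_\pm b(h)/\tilde{q} < 1$ in the admissible range of $h$, making the subsequent expansions legitimate.

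The next step is to feed the asymptotic behavior around $0$ (Proposition~\ref{thm:Asymptotic_At_0}, truncated at $N=1$) into both $\phi(b(h))$ and the integrand $\phi(r)$, and simultaneously expand the denominators $(\tilde{q}-4\pi c_\pm b)^{-3/2}$ and $(\tilde{q}-4\pi c_\pm r)^{-5/2}$ via the binomial series \eqref{TaylorSeries}. The $r^{1/2}$ leading term of $\phi(r)$ integrates explicitly to $\tfrac{2}{3}b^{3/2}/\tilde{q}^{5/2}$, whose contribution combines with the $b^{3/2}$-term coming from the boundary expansion; the $r^{3/2}Y_1(r)$-terms contribute to the $O(b^{5/2})$ remainder after one integration. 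Collecting everything in powers of $b$ produces \eqref{AsymptoticsWithB013}, and then substituting the expansions \eqref{TaylorSeriesOfB} of $b(h)^{1/2}$, $b(h)^{3/2}$, $b(h)^{5/2}$ in powers of $h$ (valid for $4\pi|c_\pm \tilde{q} h|<1$) gives \eqref{eq:Asymptotic_At_Q013_Principal} with the eighth root of unity $e_{p,q}$ defined, after the conjugation step, to match $e_\gamma$ (up to the $(1+i)/\sqrt{2}$ factor that is isolated explicitly in the statement).

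The self-similar form \eqref{eq:Asymptotic_At_Q013_Selfsimilar} follows by inserting the identity \eqref{SelfSimilarity0}, namely
\begin{equation}
Y_1(b(h)) = \frac{\pi^2}{6} - \frac{i}{8\,b(h)} - 2\pi^2\,\phi\!\left(\frac{-1}{16\pi^2 b(h)}\right),
\end{equation}
into the coefficient of $h^{3/2}$. The singular term $-i/(8b(h))$ multiplied by $\tilde{q}^{3/2}h^{3/2}$ is exactly what produces the $-i c_\pm /(2\pi \tilde{q})$ correction once we use $1/b(h) = 1/(\tilde{q}^2 h) + 4\pi c_\pm/\tilde{q}$; the remaining pieces regroup to the three-term bracket claimed in \eqref{eq:Asymptotic_At_Q013_Selfsimilar}. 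Finally, \eqref{eq:Asymptotic_At_Q013_Rescaled} is a pure rescaling $h \mapsto h/\tilde{q}^2$ in \eqref{eq:Asymptotic_At_Q013_Selfsimilar}, with the composed $\beta(h)=b(h/\tilde{q}^2)$ replacing $b(h)$.

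The main obstacle I anticipate is bookkeeping: tracking the precise powers of $\tilde{q}$ that appear in both the leading terms and the remainder, and verifying that the remainder is uniformly $O(\tilde{q}^{7/2}h^{5/2})$ in the admissible range $|h|\le 1/(4\pi\tfrac{|c_\pm|}{\tilde{q}}\tilde{q}^2)$. This amounts to controlling the tails of the two binomial series by a geometric factor and checking, using $|c_\pm|/\tilde{q} \le 4$, that none of the correction terms blow up the $\tilde{q}$-dependence. A secondary subtlety is ensuring the limit $\epsilon \to 0^+$ commutes with the integration by parts leading to \eqref{eq:Valid_For_Any_Gamma}, which is done by noting that the boundary term converges pointwise and the integrand is dominated on the bounded interval by a constant depending only on $\gamma$.
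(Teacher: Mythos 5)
Your proposal follows the paper's proof essentially step for step: the same reduction via \eqref{eq:Valid_For_Any_Gamma} and the change of variables to reach \eqref{eq:Asymptotic_Closed_Form_013}, the same insertion of Proposition~\ref{thm:Asymptotic_At_0} together with the binomial expansions \eqref{TaylorSeries} and \eqref{TaylorSeriesOfB}, and the same use of \eqref{SelfSimilarity0} (including the correct origin of the $-\tfrac{i}{2\pi}\tfrac{c_\pm}{\tilde q}$ correction from the $-i/(8b)$ singular piece). The only point you leave untreated is the determination of the branch $\sqrt{-1}=-i$ for $h<0$, which the paper cannot get from a conjugation symmetry at $t_{p,q}$ and instead justifies by the separate limiting argument of Remark~\ref{ChoiceOfSquareRootQ013}.
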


\begin{rem}
 The leading square root term is the cause of every right-angled corner in Figure~\ref{FIG_Curva}, since $\sqrt{-1} = \pm i$. Also, the self-similar patterns of $\phi$ in Figure~\ref{FIG_Curva} are analytically explained by the term $\phi(-1/(16\pi^2b(h)))$ in the expansions \eqref{eq:Asymptotic_At_Q013_Selfsimilar} and \eqref{eq:Asymptotic_At_Q013_Rescaled}. In fact, \eqref{eq:Asymptotic_At_Q013_Selfsimilar} is obtained from \eqref{eq:Asymptotic_At_Q013_Principal} via the identity \eqref{SelfSimilarity0} that we already used in the previous section. 
\end{rem}

\begin{rem}
Comparing  \eqref{eq:Asymptotic_At_Q013_Rescaled} with \eqref{eq:Asymptotic_0_Selfsimilar} in Proposition~\ref{thm:Asymptotic_At_0}, we see that $\phi$ behaves around $t_{p,q}$ essentially the same way as around 0, except rescaling the variable by $\tilde{q}^{-2}$ and the image by $\tilde{q}^{3/2}$ and replacing $h$ with $\beta(h)$ in the self-similar term. This is the rigorous version of  \eqref{eq:Approximated_Asymptotic_013} that we anticipated formally in Subsection~\ref{Section_LookingForTransformations}.
\end{rem}

\begin{rem}\label{ChoiceOfSquareRootQ013}
In Proposition~\ref{thm:Asymptotic_At_Q013}, we claim $\sqrt{-1} = -i$ whenever  $h<0$. The symmetry $\phi(-t) = \overline{\phi(t)}$ was enough to determine this around 0, but there is no such symmetry around $\phi(t_{p,q})$ for $q > 2$. However, we can work with the limit $h \to 0$ in the asymptotic expression of $\phi(t_{p,q}+h) - \phi(t_{p,q})$. 

Let $0 < |h| \ll 1$. We start with \eqref{eq:Asymptotic_Closed_Form_013}, where the leading term when $h \to 0$ is the first one. Indeed,  $\lim_{h\to 0}b(h) = 0$, so by Proposition~\ref{thm:Asymptotic_At_0} we have
\begin{equation}
\frac{\phi(b(h))}{(\tilde{q} - 4\pi c_\pm b(h))^{3/2}} \sim \frac{\textstyle{\frac{1+i}{\sqrt{2\pi}}}\,b^{1/2}+ O(b^{3/2})}{\tilde{q}^{3/2}} \qquad \text{ when } h \to 0, 
\end{equation}  
and
\begin{equation}
\int_0^b\frac{\phi(r)}{(\tilde{q}-4\pi c_\pm r)^{5/2}}\,dr \sim \frac{1+i}{\sqrt{2\pi}} \int_0^b \frac{  r^{1/2}+ O(r^{3/2})}{\tilde{q}^{5/2}}\,dr = \frac{1+i}{\sqrt{2\pi}} \frac{ b^{3/2} + O(b^{5/2})}{\tilde{q}^{5/2}} \qquad \text{ when } h \to 0.
\end{equation}   
Consequently, 
\begin{equation}\label{eq:Aux_Short_Asymptotic}
1 = \lim_{h \to 0}\frac{ \phi(t_{p,q} + h) - \phi(t_{p,q}) }{e_{p,q}\,\tilde{q}^{-3/2}\,\phi(b(h))}. 
\end{equation}  
Define $b_-(h)$ by
\begin{equation}
b(- h ) = -\frac{\tilde{q}^2 h}{1 + 4\pi c_- \tilde{q}h} = - b_-(h), 
\end{equation} 
so that $ \overline{\phi(b(-h))} = \phi(b_-(h) $. Therefore, evaluate \eqref{eq:Aux_Short_Asymptotic} in $-h$ and conjugate it so that
\begin{equation}
\begin{split}
1 & = e_{p,q} \,\lim_{h \to 0} \frac{ \overline{ \phi(t_{p,q} - h) - \phi(t_{p,q}) }}{\tilde{q}^{- 3/2} \, \overline{\phi(b(-h))} } = e_{p,q} \, \lim_{h \to 0} \frac{\overline{\phi(t_{p,q} - h) - \phi(t_{p,q})}}{\tilde{q}^{-3/2}\, \phi(b_-(h)) } \\
&  = e_{p,q} \, \,\lim_{h \to 0} \frac{\overline{\phi(t_{p,q} - h) - \phi(t_{p,q})}}{\tilde{q}^{-3/2}\, \phi(b(h)) }\,  \frac{\phi(b(h))}{ \phi(b_-(h)) } =  e_{p,q} \, \,\lim_{h \to 0} \frac{\overline{\phi(t_{p,q} - h) - \phi(t_{p,q})}}{\tilde{q}^{-3/2}\, \phi(b(h)) } \\
& = e_{p,q}^2 \,\lim_{h \to 0} \frac{\overline{\phi(t_{p,q} - h) - \phi(t_{p,q})}}{\phi(t_{p,q} + h) - \phi(t_{p,q})}.
\end{split}
\end{equation}
We used \eqref{eq:Aux_Short_Asymptotic} in the last equality, and  
\begin{equation}
\lim_{h \to 0} \frac{\phi(b(h))}{\phi(b_-(h))} = \lim_{h \to 0} \frac{b(h)^{1/2}}{b_-(h)^{1/2}} = 1.   
\end{equation} 
in the previous one. Finally, using the asymptotic behavior in Proposition~\ref{thm:Asymptotic_At_Q013}, we get
\begin{equation}
1 = e_{p,q}^2 \lim_{h \to 0} \frac{ \overline{ e_{p,q} (1+i) (-h)^{1/2} }  }{ e_{p,q} (1+i) h^{1/2}  } = e_{p,q}^2 \frac{\overline{e_{p,q}}}{e_{p,q}} \frac{1-i}{1+i} \overline{\sqrt{-1}} = -i \overline{\sqrt{-1}},
\end{equation}
which implies that $\sqrt{-1} = -i$ must hold so that Proposition~\ref{thm:Asymptotic_At_Q013} works also for $h<0$. 
\end{rem}

As a corollary, we show that the asymptotic behavior in Proposition~\ref{thm:Asymptotic_At_Q013} can be truncated in its first term independently of $q$, which is what we use in the proofs of Theorems~\ref{TheoremHausdorffDimension} and \ref{TheoremHausdorffDimensionGeneralised}.
\begin{cor}\label{thm:Global_Bound_Q013}
Let $p, q \in \mathbb{N}$ such that $q \equiv 0,1,3 \pmod{4}$ and $\operatorname{gcd}(p,q) = 1$. Given $M >0$, there exists $C_M>0$ independent of $p$ and $q$ such that 
\begin{equation}
\left|  \phi(t_{p,q}+h) - \phi(t_{p,q}) \right|  \leq C_M \frac{|h|^{1/2}}{q^{1/2}}, \qquad \forall |h| < \frac{M}{q^2}.
\end{equation}
\end{cor}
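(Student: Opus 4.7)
The plan is to rely on the exact closed-form identity \eqref{eq:Asymptotic_Closed_Form_013} derived during the proof of Proposition~\ref{thm:Asymptotic_At_Q013}, rather than on its truncated asymptotic consequence. The expansion \eqref{eq:Asymptotic_At_Q013_Principal} is only valid for $|h|\lesssim 1/\tilde{q}^2$, whereas the closed-form identity
\[
\phi(t_{p,q}+h)-\phi(t_{p,q})
= e_\gamma\left[\frac{\phi(b(h))}{(\tilde{q}-4\pi c_\pm b(h))^{3/2}}
- 6\pi c_\pm \int_0^{b(h)}\frac{\phi(r)}{(\tilde{q}-4\pi c_\pm r)^{5/2}}\, dr\right]
\]
is meaningful throughout the full range $|h|<M/q^2$ once the sign of $c_\pm$ is chosen to match $\operatorname{sgn}(h)$. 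Working directly with the closed form sidesteps the need to split the range of $h$ into two regimes for large $M$.

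Two preparatory observations will drive the argument. First, the leading-order behavior from Proposition~\ref{thm:Asymptotic_At_0} combined with the trivial uniform bound $\|\phi\|_\infty\leq \sum_k 2/(4\pi^2 k^2)<\infty$ yields the elementary global estimate $|\phi(r)|\leq C|r|^{1/2}$ for every $r\in\mathbb{R}$ (the small-$r$ regime is covered by the $\sqrt{h}$ leading term, while for $r$ bounded away from $0$ one uses that $|r|^{1/2}$ is then bounded below). Second, a direct simplification gives the algebraic identity $\tilde{q}-4\pi c_\pm b(h)=\tilde{q}/A(h)$, where $A(h):=1+4\pi c_\pm\tilde{q} h$. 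Because the sign convention on $c_\pm$ from Subsection~\ref{Section_LookingForTransformations} matches the sign of $h$, one has $A(h)\geq 1$; the bounds $|c_\pm|\leq 4\tilde{q}$ and $\tilde{q}^2|h|\leq q^2\cdot M/q^2=M$ then give $A(h)\leq 1+16\pi M$, so $A(h)$ is uniformly bounded between constants depending only on $M$.

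With these ingredients, substituting $|\phi(b(h))|\leq C|b(h)|^{1/2}=C\tilde{q}|h|^{1/2}/A(h)^{1/2}$ into the boundary term produces a bound of order $A(h)|h|^{1/2}/\tilde{q}^{1/2}$, which is at most $C_M|h|^{1/2}/\tilde{q}^{1/2}$. For the integral term, the monotonicity of $r\mapsto\tilde{q}-4\pi c_\pm r$ on the integration interval gives $|\tilde{q}-4\pi c_\pm r|\geq\tilde{q}/A(h)$; integrating $C|r|^{1/2}$ up to $|b(h)|=\tilde{q}^2|h|/A(h)$, using $|c_\pm|\leq 4\tilde{q}$, and invoking the rewriting $\tilde{q}^{3/2}|h|^{3/2}\leq M\,|h|^{1/2}/\tilde{q}^{1/2}$ of $\tilde{q}^2|h|\leq M$, produces the same order of bound. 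Since $\tilde{q}\geq q/2$ by \eqref{eq:Definition_Of_TildePTildeQ} when $q\equiv 0,1,3\pmod{4}$, the two estimates combine into the desired $C_M|h|^{1/2}/q^{1/2}$. The only real difficulty is the sign bookkeeping that guarantees $A(h)\geq 1$ uniformly (so that the denominators along the integration path stay bounded away from zero even when $|h|$ is of order $1/q^2$ rather than $o(1/q^2)$), but this is handled once and for all by the dichotomy between $c_+>0$ for $h>0$ and $c_-<0$ for $h<0$ already built into the transformation of Subsection~\ref{Section_LookingForTransformations}.
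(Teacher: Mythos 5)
Your argument is essentially correct, but be careful with one intermediate claim: the ``trivial uniform bound $\|\phi\|_\infty<\infty$'' is false, and consequently so is the global estimate $|\phi(r)|\leq C|r|^{1/2}$ for \emph{every} $r\in\mathbb{R}$. The $k=0$ term of \eqref{Phi} is $it$ (this is what produces the shift in the periodicity relation \eqref{PeriodicityPhi}), so $\phi$ grows linearly at infinity. Fortunately your application only ever evaluates $\phi$ at points $r$ with $|r|\leq|b(h)|\leq \tilde{q}^2|h|\leq M$, and on the compact set $[-M,M]$ the estimate $|\phi(r)|\leq C_M|r|^{1/2}$ does hold (small $r$ by Proposition~\ref{thm:Asymptotic_At_0}, the rest by continuity and $|r|^{1/2}$ being bounded below), with a constant depending on $M$ exactly as the corollary permits. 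Restate the lemma on that compact range and the proof goes through.

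With that repair, your route is genuinely different from the paper's and, in my view, cleaner. The paper works from the truncated Taylor expansion \eqref{AsymptoticsWithB013}: it chooses $\delta_M\in(0,1)$ so that $|h|<M/\tilde{q}^2$ forces $4\pi c_\pm b(h)/\tilde{q}<\delta_M$, obtains the intermediate bound $|\phi(t_{p,q}+h)-\phi(t_{p,q})|\leq C_{\delta_M}|b(h)|^{1/2}/\tilde{q}^{3/2}$, and then converts to a bound in $h$ by splitting into the regimes $4\pi|c_\pm\tilde{q}h|\geq 1$ (where $|b(h)|\leq\tilde{q}^2|h|/2$) and $4\pi|c_\pm\tilde{q}h|<1$ (where \eqref{eq:Asymptotic_At_Q013_Principal} applies directly). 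You instead estimate the exact closed form \eqref{eq:Asymptotic_Closed_Form_013} term by term, using the identity $\tilde{q}-4\pi c_\pm b(h)=\tilde{q}/A(h)$ with $1\leq A(h)\leq 1+16\pi M$ and the compact-range bound on $\phi$; this avoids both the Taylor truncation and the case split, and the sign bookkeeping you isolate ($c_+>0$ for $h>0$, $c_-<0$ for $h<0$, hence $A(h)\geq1$) is indeed the one point where the argument could fail and is correctly handled by the construction in Subsection~\ref{Section_LookingForTransformations}. Both proofs ultimately rest on the same three facts ($A(h)\geq1$, $|c_\pm|\leq4\tilde{q}$, $\tilde{q}^2|h|\leq M$), so the content is the same; yours packages it more uniformly, at the cost of needing the auxiliary pointwise bound on $\phi$ over $[-M,M]$.
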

\begin{proof}
The Taylor expansion that was used to get \eqref{AsymptoticsWithB013} works because $4\pi c_{\pm}b/\tilde{q} < 1$ for all $h \in \mathbb{R}$. However,  $\lim_{h\to\infty}4\pi c_\pm b(h)/\tilde{q} = 1$, so we can truncate the series uniformly only if $4\pi c_\pm b(h)/\tilde{q} < \delta$ for some fixed $0<\delta<1$. That is equivalent to $|h| < (\textstyle{\frac{\delta}{4\pi \frac{|c|}{\tilde{q}}(1-\delta)}})/\tilde{q}^2$. 

Now, given $M>0$, since $\delta/(1-\delta)$ covers the whole positive real line for $\delta \in (0,1)$, there exists $0<\delta_M < 1$ such that $M = \textstyle{\frac{\delta_M}{16\pi(1-\delta_M)}}$. Since $|c_\pm |< 4\tilde{q}$, then $|h| < M/\tilde{q}^2$ means that $4\pi c_\pm b(h)/\tilde{q} < \delta_M$, and thus we can truncate \eqref{AsymptoticsWithB013}, in the sense that there exists $C_{\delta_M}>0$ such that 
\begin{equation}\label{eq:Truncation_In_Terms_Of_B}
 \left|  \phi(t_{p,q}+h) - \phi(t_{p,q}) \right|  \leq C_{\delta_M} \frac{|b(h)|^{1/2}}{\tilde{q}^{3/2}}.
\end{equation}
Now, if $4\pi |c_{\pm} \tilde{q} h| \geq 1$, then from the definition of $b(h)$ we have $|b(h)| \leq \tilde{q}^2 |h| /2$, so we get 
\begin{equation}
 \left|  \phi(t_{p,q}+h) - \phi(t_{p,q}) \right|  \leq \frac{C_{\delta_M}}{\sqrt{2}} \frac{|h|^{1/2}}{\tilde{q}^{1/2}}.
\end{equation}
Otherwise, if $4\pi |c_{\pm} \tilde{q} h| < 1$, then the bound is immediate from Proposition~\ref{thm:Asymptotic_At_Q013} because in particular we have $|h| < \tilde{q}^{-2}$ and then 
\begin{equation}
\tilde{q}^{7/2}|h|^{5/2} < \tilde{q}^{3/2}|h|^{3/2}  < \tilde{q}^{-1/2}|h|^{1/2}
\end{equation} 
can be used in \eqref{eq:Asymptotic_At_Q013_Principal}. 
\end{proof}

\begin{figure}[h]
  \centering
    \includegraphics[width=0.7\textwidth]{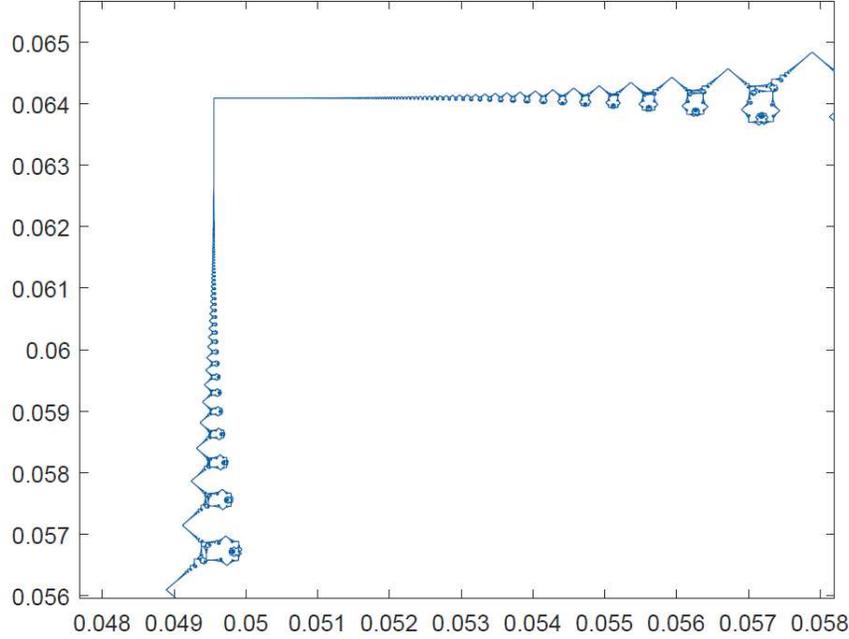}
    \caption{Zoom of $\phi(\mathbb R)$ around $\phi(t_{1,8})$. Compare to Figure~\ref{FIG_Curva} to appreciate the self-similar pattern, which is analytically explained in \eqref{eq:Asymptotic_At_Q013_Selfsimilar} in Proposition~\ref{thm:Asymptotic_At_Q013}. Compare it also to the behavior of $\phi$ around 0 in Figure~\ref{fig:Zoom0}. Except for a rotation by $\pi/4$ radians, they are very similar. }
  \label{fig:Zoom18} 
\end{figure}

\subsection{Asymptotic behavior around $t_{p,q}$ with $q \equiv 2 \pmod{4}$}\label{Section_AsymptoticInQ2}
If $p/q$ is an irreducible fraction such that $q \equiv 2 \pmod{4}$, we saw that there exists $\gamma \in \Gamma_{\theta}$ satisfying $\gamma(\tilde{p}/\tilde{q}) = 1$, where $\tilde{p}/\tilde{q} = 2p/q$ is irreducible. The strategy is exactly the same as in Subsection~\ref{Section_AsymptoticInQ013}, except that when integrating by parts in \eqref{eq:Asymptotic_At_Q013_Before_Integrating_By_Parts} we choose  
\begin{equation}
v = 4\pi i \, \left[ \phi\left(-\frac{\gamma(\tau+i\epsilon) }{4\pi} \right) - \phi\left(-\frac{\gamma(\tilde{p}/\tilde{q} + i\epsilon)}{4\pi} \right) \right]
\end{equation}
instead. Then, after taking the limit $\epsilon \to 0$ and changing variables $\gamma(\tau)/4\pi=r$ as before, we get
\begin{equation}\label{eq:Asymptotic_Closed_Form_2}
\phi(t_{p,q} + h) - \phi(t_{p,q})   = e_{\gamma} \Bigg[ \frac{\phi(t_{1,2} + b(h)) - \phi(t_{1,2})}{(\tilde{q}-4\pi c_{\pm} b(h))^{3/2}}   - 6\pi \,c_{\pm} \int_0^{b(h)}{  \frac{\phi(t_{1,2} + r) - \phi(t_{1,2})}{(\tilde{q}-4\pi c_{\pm} r)^{5/2}}\,dr   }  \Bigg]
\end{equation} 
for all $h \in \mathbb{R}$. Now develop $\phi(t_{1,2} + b(h)) - \phi(t_{1,2})$ using Proposition~\ref{thm:Asymptotic_At_1_2} and use the Taylor expansions \eqref{TaylorSeries} to get a series in terms of $b = b(h)$,
\begin{equation}\label{eq:Asymptotic_With_B_2}
\phi(t_{p,q}+h) - \phi(t_{p,q}) = e_{\gamma} \,\left[  -16\,\frac{1-i}{\sqrt{2\pi}}\, Z_1(b)\,\frac{b^{3/2}}{\tilde{q}^{3/2}} + \frac{1}{\tilde{q}^{3/2}}\,O\left( b^{5/2} \right)   \right].
\end{equation} 
Finally, expanding the Taylor series for powers of $b(h)$ as in \eqref{TaylorSeriesOfB}, we get the asymptotic behavior we were looking for:

\begin{prop}\label{thm:Asymptotic_At_Q2}
Let $p, q \in \mathbb{N}$ such that $q \equiv 2 \pmod{4}$, $p<q$ and $\operatorname{gcd}(p,q) = 1$. Define $\tilde{p}$ and $\tilde{q}$ so that $\tilde{p}/\tilde{q} = 2p/q$ is an irreducible fraction, and set
\begin{equation}
Z_1(h) = \sum_{\substack{k=1 \\ k \text{ odd}}}^{\infty}{ \frac{e^{ik^2/(16h)}}{k^{2}} } \qquad \text{ and } \qquad b(h) = \left\{
\begin{array}{ll}
\frac{\tilde{q}^2h}{1+4\pi c_{+} \tilde{q} h}, & \text{when } h \geq 0, \\
\frac{\tilde{q}^2h}{1+4\pi c_{-} \tilde{q} h}, & \text{when } h < 0,
\end{array}
\right.
\end{equation}
where $\tilde{q} \leq c_+, |c_-| \leq 3\tilde{q}$ as in Subsection~\ref{Section_LookingForTransformations}. Then, there exists a complex eighth root of unity $e_{p,q}$  depending only on $p$ and $q$ such that
\begin{equation}\label{eq:Asymptotic_At_Q2_Principal}
\phi(t_{p,q}+h) - \phi(t_{p,q})   = e_{p,q} \,\left(  -16\,\frac{1-i}{\sqrt{2\pi}}\,Z_1(b(h))\, \tilde{q}^{3/2}\,h^{3/2} + O\left( \tilde{q}^{7/2} h^{5/2} \right)   \right), \quad  |h| < \frac{1}{4\pi \frac{|c_\pm|}{\tilde{q}}}\,\frac{1}{\tilde{q}^2},
\end{equation}
where $c_\pm = c_+$ when $h>0$ and $c_\pm = c_-$ when $h<0$. Also, $\sqrt{-1} = -i$ when $h < 0$. Equivalently, rescaling the variable, 
\begin{equation}\label{eq:Asymptotic_At_Q2_Rescaled}
\phi\left(t_{p,q}+\frac{h}{\tilde{q}^2}\right) - \phi(t_{p,q})  = \frac{e_{p,q}}{\tilde{q}^{3/2}} \,\left(  -16\,\frac{1-i}{\sqrt{2\pi}}\,Z_1(\beta(h)) \,h^{3/2} + O\left(  h^{5/2} \right)   \right),  \qquad    |h| < \frac{1}{4\pi \frac{|c_\pm|}{\tilde{q}}},
\end{equation}
where $\beta(h) = b(h/\tilde{q}^2)$.
\end{prop}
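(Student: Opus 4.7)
The plan is to follow the exact template used for the $q\equiv 0,1,3\pmod 4$ case (Proposition~\ref{thm:Asymptotic_At_Q013}), but adapted to the fact that now $\gamma(\tilde p/\tilde q)=1$ rather than $0$, so that after reduction we land near $t_{1,2}$ rather than near $0$. Concretely, I would start from \eqref{eq:Asymptotic_At_Q013_Before_Integrating_By_Parts} with the $\theta$-modular $\gamma$ constructed in Subsection~\ref{Subsection_TransformationBothOdd}, and integrate by parts with the same $u$ as before but with the \emph{shifted} antiderivative
\[
v=4\pi i\Bigl[\phi\bigl(-\tfrac{\gamma(\tau+i\varepsilon)}{4\pi}\bigr)-\phi\bigl(-\tfrac{\gamma(\tilde p/\tilde q+i\varepsilon)}{4\pi}\bigr)\Bigr].
\]
The subtraction is crucial: since $\gamma(\tilde p/\tilde q)=1$ and $-1/(4\pi)$ lies on the real line, we need a quantity bounded as $\varepsilon\to 0^+$. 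After passing to the limit (by dominated convergence, exactly as in the derivation of \eqref{eq:Valid_For_Any_Gamma}) and changing variables $\gamma(\tau)/4\pi=r$, the boundary term evaluates cleanly at $r=t_{1,2}$ at the lower endpoint and at $r=t_{1,2}+b(h)$ at the upper endpoint, yielding \eqref{eq:Asymptotic_Closed_Form_2}.

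Next I would feed Proposition~\ref{thm:Asymptotic_At_1_2} into both the boundary term and the integrand. Since the leading behavior around $t_{1,2}$ is $O(b^{3/2})$ rather than $O(b^{1/2})$, the factor $(\tilde q-4\pi c_\pm b)^{-3/2}$ in the boundary term, expanded via \eqref{TaylorSeries}, contributes only to subleading orders; only its $0$-th order term $\tilde q^{-3/2}$ is relevant at precision $O(b^{5/2})/\tilde q^{3/2}$. The integral term behaves like $\int_0^{b} r^{3/2}\,dr=O(b^{5/2})$, so it is absorbed into the error. Collecting these, the identity \eqref{eq:Asymptotic_With_B_2} drops out with a clean leading term $-16\tfrac{1-i}{\sqrt{2\pi}}\,Z_1(b)\,b^{3/2}/\tilde q^{3/2}$ and error $O(b^{5/2}/\tilde q^{3/2})$. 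Finally, I would substitute the Taylor expansions \eqref{TaylorSeriesOfB} for $b(h)^{3/2}$ and $b(h)^{5/2}$, valid on $4\pi|c_\pm\tilde q h|<1$, which is exactly the range $|h|<(4\pi|c_\pm|/\tilde q)^{-1}\tilde q^{-2}$ stated in the proposition. The rescaled form \eqref{eq:Asymptotic_At_Q2_Rescaled} is then an immediate substitution.

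Two minor but non-trivial points need care. First, the root of unity: $e_\gamma$ from \eqref{eq:Theta_Transformation_General} depends only on $(c,d)$ of the chosen $\gamma$, and by the construction of Subsection~\ref{Subsection_TransformationBothOdd} these depend only on $(p,q)$ (modulo the freedom of Remark~\ref{RemarkForC2}, which does not affect the eighth root); this gives the claimed $e_{p,q}$. Second, the branch choice $\sqrt{-1}=-i$ for $h<0$: I would mimic the limiting argument in Remark~\ref{ChoiceOfSquareRootQ013} almost verbatim, with $\phi(b(h))$ replaced by $\phi(t_{1,2}+b(h))-\phi(t_{1,2})$. The identity $\overline{\phi(t_{1,2}+b(-h))-\phi(t_{1,2})}=\phi(t_{1,2}+b_-(h))-\phi(t_{1,2})$ (using $Z_1(-h)=\overline{Z_1(h)}$ and the leading $h^{3/2}$ behavior) reduces the calculation to matching $(-h)^{3/2}$ with $-i\,h^{3/2}$; this forces $\sqrt{-1}=-i$.

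The main obstacle is bookkeeping: making sure the error terms from \eqref{eq:Asymptotic_With_B_2} survive passage through the quadratic-fractional expansion of $b(h)$ and do not blow up $\tilde q$ beyond the stated $\tilde q^{7/2}h^{5/2}$. This is purely a matter of counting powers of $\tilde q$ produced by each $b(h)^n$, using that $b(h)=\tilde q^2 h(1+O(\tilde q|c_\pm h|))$ on the stated range. No new ideas beyond those already used for the $q\equiv 0,1,3\pmod 4$ case are required; the proof is essentially a parallel execution with the base point shifted from $0$ to $t_{1,2}$ and a correspondingly shifted antiderivative in the integration by parts.
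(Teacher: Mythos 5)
Your proposal is correct and follows essentially the same route as the paper: the same shifted antiderivative $v=4\pi i[\phi(-\gamma(\tau+i\epsilon)/4\pi)-\phi(-\gamma(\tilde p/\tilde q+i\epsilon)/4\pi)]$ in the integration by parts, the same passage to \eqref{eq:Asymptotic_Closed_Form_2}, the same insertion of Proposition~\ref{thm:Asymptotic_At_1_2} with the observation that the $O(b^{3/2})$ leading order trivializes the Taylor expansion of the prefactors, and the same final expansion of powers of $b(h)$ in $h$. Your handling of the branch $\sqrt{-1}=-i$ via the argument of Remark~\ref{ChoiceOfSquareRootQ013} is in fact slightly more explicit than the paper, which leaves that adaptation implicit.
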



\begin{rem}
Proposition~\ref{thm:Asymptotic_At_Q2} confirms what we formally deduced in\eqref{eq:Approximated_Asymptotic_2}, this is, that $\phi$ behaves around $t_{p,q}$ with $q \equiv 2 \pmod{4}$ essentially the same way as around $t_{1,2}$, except the usual rescaling and replacing $h$ with $\beta(h)$ in the argument of $Z_1$. 
\end{rem}


The analogous result of Corollary~\ref{thm:Global_Bound_Q013} is also satisfied, with an equally analogous proof.
\begin{cor}\label{thm:Global_Bound_Q2}
Let $p, q \in \mathbb{N}$ such that $q \equiv 2 \pmod{4}$ and $\operatorname{gcd}(p,q) = 1$. Given $M>0$, there exists $C_M>0$ independent of $p$ and $q$ such that  
\begin{equation}
\left| \phi(t_{p,q}+h) - \phi(t_{p,q}) \right|  \leq C_M \,  q^{3/2}\, h^{3/2}, \qquad \forall |h| \leq \frac{M}{q^2}. 
\end{equation}  
\end{cor}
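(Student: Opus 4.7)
The plan is to mimic the proof of Corollary~\ref{thm:Global_Bound_Q013} but to work from the intermediate identity \eqref{eq:Asymptotic_With_B_2} rather than from the fully rescaled expression in Proposition~\ref{thm:Asymptotic_At_Q2}. Two preliminary observations drive the argument. First, $Z_1$ is uniformly bounded on $\mathbb{R}$, since $|Z_1(b)| \leq \sum_{k\text{ odd}} k^{-2} < \infty$ for every $b$. Second, the Taylor expansion \eqref{TaylorSeries} used in passing from \eqref{eq:Asymptotic_Closed_Form_2} to \eqref{eq:Asymptotic_With_B_2} converges as long as $4\pi|c_\pm|b(h)/\tilde{q}<1$, but in order to absorb the tail of the series into a single $O(\cdot)$ constant we need the stronger requirement $4\pi|c_\pm|b(h)/\tilde{q}<\delta$ for some fixed $\delta<1$.

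Given $M>0$, the strategy is to choose $\delta_M \in (0,1)$ such that $M = \delta_M/(12\pi(1-\delta_M))$, where the factor $12\pi$ uses the bound $|c_\pm|<3\tilde{q}$ specific to the $q\equiv 2\pmod{4}$ case (see Remark~\ref{RemarkForC2}). As $\delta\mapsto\delta/(1-\delta)$ sweeps out $(0,\infty)$, such a $\delta_M$ exists. Then $|h|<M/\tilde{q}^2$ forces $4\pi|c_\pm|b(h)/\tilde{q}<\delta_M$, the remainder term in \eqref{eq:Asymptotic_With_B_2} is uniformly controlled, and combining this with the bound on $Z_1$ yields
\begin{equation*}
|\phi(t_{p,q}+h) - \phi(t_{p,q})| \leq C'_M\, \frac{|b(h)|^{3/2}}{\tilde{q}^{3/2}}, \qquad |h|<M/\tilde{q}^2.
\end{equation*}

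To finish, the same inequality $4\pi|c_\pm \tilde{q} h| < \delta_M$ gives $|1+4\pi c_\pm \tilde{q} h| \geq 1-\delta_M$, so from the definition of $b(h)$,
\begin{equation*}
|b(h)| \;=\; \frac{\tilde{q}^2|h|}{|1+4\pi c_\pm \tilde{q} h|} \;\leq\; \frac{\tilde{q}^2|h|}{1-\delta_M}.
\end{equation*}
Inserting this into the previous display gives $|\phi(t_{p,q}+h)-\phi(t_{p,q})|\leq C_M\, \tilde{q}^{3/2}|h|^{3/2}\leq C_M\, q^{3/2}|h|^{3/2}$, where the last inequality uses $\tilde{q}=q/2\leq q$.

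I do not anticipate any real obstacle: the argument is a direct mirror of the $q\equiv 0,1,3\pmod 4$ case and is in fact somewhat simpler, because the Q2 expansion has a single leading order $h^{3/2}$, whereas in Q013 one has to juggle the competing $h^{1/2}$ and $h^{3/2}$ terms and split on whether $4\pi|c_\pm\tilde{q} h|$ is smaller or larger than $1$. The only points requiring mild care are tracking the correct constant $|c_\pm|<3\tilde{q}$ (rather than $4\tilde{q}$) when defining $\delta_M$, and noting that the uniform boundedness of $Z_1$ lets us treat the leading $Z_1(b(h))\,b(h)^{3/2}/\tilde{q}^{3/2}$ term on the same footing as the $O(b(h)^{5/2})/\tilde{q}^{3/2}$ remainder.
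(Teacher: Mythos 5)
Your proof is correct and is essentially the paper's intended argument --- the paper only states that Corollary~\ref{thm:Global_Bound_Q2} has a proof ``analogous'' to that of Corollary~\ref{thm:Global_Bound_Q013}, and your truncation of \eqref{eq:Asymptotic_With_B_2} at a fixed $\delta_M$ followed by $|b(h)|\lesssim \tilde q^2|h|$ is precisely that analogue (in fact slightly cleaner, since the single $h^{3/2}$ leading order lets you skip the case split on whether $4\pi|c_\pm\tilde q h|$ exceeds $1$ that the paper uses in the Q013 case). The only blemish is your justification of $|1+4\pi c_\pm\tilde q h|\geq 1-\delta_M$: the inequality $4\pi|c_\pm\tilde q h|<\delta_M$ does not follow from $|h|<M/\tilde q^2$ (what you actually established is $4\pi|c_\pm||b(h)|/\tilde q<\delta_M$), but the bound you want holds anyway --- indeed in the stronger form $|1+4\pi c_\pm\tilde q h|\geq 1$, hence $|b(h)|\leq\tilde q^2|h|$ --- because the sign convention for $c_\pm$ in Subsection~\ref{Subsection_Reduction} guarantees $4\pi c_\pm\tilde q h\geq 0$.
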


\bibliographystyle{acm}
\bibliography{Bibliography_2020_11.bib}

\begin{thebibliography}{10}

\bibitem{Apostol1990}
{\sc Apostol, T.~M.}
\newblock {\em Modular functions and {D}irichlet series in number theory},
  second~ed., vol.~41 of {\em Graduate Texts in Mathematics}.
\newblock Springer-Verlag, New York, 1990.

\bibitem{BanicaVega2020-R}
{\sc Banica, V., and Vega, L.}
\newblock Riemann's non-differentiable function and the binormal curvature
  flow.
\newblock {\em Preprint\/} (2020).
\newblock \url{http://arxiv.org/abs/2007.07184}.

\bibitem{BerndtEvansWilliams1998}
{\sc Berndt, B.~C., Evans, R.~J., and Williams, K.~S.}
\newblock {\em Gauss and {J}acobi sums}.
\newblock Canadian Mathematical Society Series of Monographs and Advanced
  Texts. John Wiley \& Sons, Inc., New York, 1998.
\newblock A Wiley-Interscience Publication.

\bibitem{BerryMarzoliSchleich2001}
{\sc Berry, M., Marzoli, I., and Schleich, W.}
\newblock Quantum carpets, carpets of light.
\newblock {\em Phys. {W}orld 14}, 6 (2001), 39--46.

\bibitem{BerryKlein1996}
{\sc Berry, M.~V., and Klein, S.}
\newblock Integer, fractional and fractal {T}albot effects.
\newblock {\em J. Modern Opt. 43}, 10 (1996), 2139--2164.

\bibitem{BoritchevEceizabarrenaVilaca2019}
{\sc Boritchev, A., Eceizabarrena, D., and {Vilaça da Rocha}, V.}
\newblock Riemann's non-differentiable function is intermittent.
\newblock {\em Preprint\/} (2019).
\newblock \url{https://arxiv.org/abs/1910.13191}.

\bibitem{ChamizoCordoba1999}
{\sc Chamizo, F., and C\'{o}rdoba, A.}
\newblock Differentiability and dimension of some fractal {F}ourier series.
\newblock {\em Adv. {M}ath. 142}, 2 (1999), 335--354.

\bibitem{Cordoba2008}
{\sc C\'{o}rdoba, A.}
\newblock Encounters at the interface between number theory and harmonic
  analysis.
\newblock In {\em Proceedings of the ``{S}egundas {J}ornadas de {T}eor\'{\i}a
  de {N}\'{u}meros''\/} (2008), Bibl. Rev. Mat. Iberoamericana, Rev. Mat.
  Iberoamericana, Madrid, pp.~101--118.

\bibitem{DaRios1906}
{\sc Da~Rios, L.~S.}
\newblock Sul moto d'un liquido indefinito con un filetto vorticoso di forma
  qualunque.
\newblock {\em Rend. {C}irc. {M}at. {P}alermo 22}, 1 (1906), 117--135.

\bibitem{delaHozVega2014}
{\sc {de la Hoz}, F., and Vega, L.}
\newblock Vortex filament equation for a regular polygon.
\newblock {\em Nonlinearity 27}, 12 (2014), 3031--3057.

\bibitem{Duistermaat1991}
{\sc Duistermaat, J.~J.}
\newblock Self-similarity of ``{R}iemann's nondifferentiable function''.
\newblock {\em Nieuw {A}rch. {W}isk. 9}, 3 (1991), 303--337.

\bibitem{Eceizabarrena2020}
{\sc Eceizabarrena, D.}
\newblock Geometric differentiability of {R}iemann's non-differentiable
  function.
\newblock {\em Adv. Math. 366\/} (2020), 107091.

\bibitem{Falconer2014}
{\sc Falconer, K.}
\newblock {\em Fractal geometry: Mathematical foundations and applications},
  third~ed.
\newblock John Wiley \& Sons, 2014.

\bibitem{FrischParisi1985}
{\sc Frisch, U., and Parisi, G.}
\newblock On the singularity structure of fully developed turbulence.
\newblock In {\em Proc. {E}nrico {F}ermi {I}nternational {S}ummer {S}chool in
  {P}hysics\/} (1985), pp.~84--88.
\newblock Appendix to `Fully developed turbulence and intermittency', by U.
  Frisch.

\bibitem{Gerver1970}
{\sc Gerver, J.}
\newblock The differentiability of the {R}iemann function at certain rational
  multiples of {$\pi $}.
\newblock {\em Amer. J. Math. 92\/} (1970), 33--55.

\bibitem{Gerver1971}
{\sc Gerver, J.}
\newblock More on the differentiability of the {R}iemann function.
\newblock {\em Amer. J. Math. 93\/} (1971), 33--41.

\bibitem{Grafakos2009}
{\sc Grafakos, L.}
\newblock {\em Classical {F}ourier analysis}, second~ed., vol.~249 of {\em
  Graduate Texts in Mathematics}.
\newblock Springer, 2008.

\bibitem{Hardy1916}
{\sc Hardy, G.~H.}
\newblock Weierstrass's non-differentiable function.
\newblock {\em Trans. Amer. Math. Soc. 17}, 3 (1916), 301--325.

\bibitem{Hasimoto1972}
{\sc Hasimoto, H.}
\newblock A soliton on a vortex filament.
\newblock {\em J. Fluid Mech. 51}, 3 (1972), 477--485.

\bibitem{Hunt1998}
{\sc Hunt, B.~R.}
\newblock The {H}ausdorff dimension of graphs of {W}eierstrass functions.
\newblock {\em Proc. Amer. Math. Soc. 126}, 3 (1998), 791--800.

\bibitem{Jaffard1996}
{\sc Jaffard, S.}
\newblock The spectrum of singularities of {R}iemann's function.
\newblock {\em Rev. Mat. Iberoamericana 12}, 2 (1996), 441--460.

\bibitem{JaffardMeyer1996}
{\sc Jaffard, S., and Meyer, Y.}
\newblock Wavelet methods for pointwise regularity and local oscillations of
  functions.
\newblock {\em Mem. Amer. Math. Soc. 123}, 587 (1996).

\bibitem{KaplanMalletParetYorke1984}
{\sc Kaplan, J.~L., Mallet-Paret, J., and Yorke, J.~A.}
\newblock The {L}yapunov dimension of a nowhere differentiable attracting
  torus.
\newblock {\em Ergodic Theory Dynam. Systems 4}, 2 (1984), 261--281.

\bibitem{Khinchin1964}
{\sc Khinchin, A.~Y.}
\newblock {\em Continued fractions}.
\newblock Dover Publications, 1997.
\newblock Reprint of the 1964 translation.

\bibitem{KlecknerScheelerIrvine2014}
{\sc Kleckner, D., Scheeler, M.~W., and Irvine, W. T.~M.}
\newblock The life of a vortex knot.
\newblock {\em Phys. Fluids 26}, 9 (2014), 091105.

\bibitem{Koiso1997}
{\sc Koiso, N.}
\newblock The vortex filament equation and a semilinear {S}chr\"{o}dinger
  equation in a {H}ermitian symmetric space.
\newblock {\em Osaka J. Math. 34}, 1 (1997), 199--214.

\bibitem{KumarVideos}
{\sc Kumar, S.}
\newblock \url{https://sites.google.com/view/skumar1712/simulation-videos}.
  Visited on October 26, 2020.

\bibitem{Mandelbrot1977}
{\sc Mandelbrot, B.~B.}
\newblock {\em Fractals: form, chance, and dimension}, revised~ed.
\newblock W. H. Freeman and Co., San Francisco, Calif., 1977.
\newblock Translated from the French.

\bibitem{MatsutaniOnishi2003}
{\sc Matsutani, S., and \^{O}nishi, Y.}
\newblock Wave-particle complementarity and reciprocity of {G}auss sums on
  {T}albot effects.
\newblock {\em Found. Phys. Lett. 16}, 4 (2003), 325--341.

\bibitem{Mattila1995}
{\sc Mattila, P.}
\newblock {\em Geometry of sets and measures in {E}uclidean spaces: Fractals
  and rectifiability}, vol.~44 of {\em Cambridge Studies in Advanced
  Mathematics}.
\newblock Cambridge University Press, Cambridge, 1995.

\bibitem{OskolkovChakhkiev2010}
{\sc Oskolkov, K.~I., and Chakhkiev, M.~A.}
\newblock On {R}iemann `nondifferentiable' function and {S}chr\"{o}dinger
  equation.
\newblock {\em Proc. Steklov Inst. Math. 269}, 1 (2010), 186--196.

\bibitem{Ricca1991}
{\sc Ricca, R.}
\newblock Rediscovery of {Da Rios} equations.
\newblock {\em Nature}, 352 (1991), 561--562.

\bibitem{Shen2018}
{\sc Shen, W.}
\newblock Hausdorff dimension of the graphs of the classical {W}eierstrass
  functions.
\newblock {\em Math. Z. 289}, 1-2 (2018), 223--266.

\bibitem{Smith1972}
{\sc Smith, A.}
\newblock The differentiability of {R}iemann's functions.
\newblock {\em Proc. Amer. Math. Soc. 34\/} (1972), 463--468.

\bibitem{SteinShakarchi2003}
{\sc Stein, E.~M., and Shakarchi, R.}
\newblock {\em Complex analysis}, vol.~2 of {\em Princeton Lectures in
  Analysis}.
\newblock Princeton University Press, 2003.

\bibitem{Talbot1836}
{\sc Talbot, H.~F.}
\newblock {F}acts relating to optical science. {N}o. {IV}.
\newblock {\em Philos. Mag. 9}, 56 (1836), 401--407.

\bibitem{Weierstrass1872}
{\sc Weierstrass, K.}
\newblock {\"U}ber continuirliche {F}unctionen eines reellen {A}rguments, die
  f\"ur keinen {W}erth des letzteren einen bestimmten {D}ifferentialquotienten
  besitzen.
\newblock In {\em Mathematische {W}erke. {II}. {A}bhandlungen 2}. 1895,
  pp.~71--74.

\end{thebibliography}

\end{document}